\DeclareMathOperator{\diag}{diag}
\let\originalleft\left 
\let\originalright\right
\renewcommand{\left}{\mathopen{}\mathclose\bgroup\originalleft}
\renewcommand{\right}{\aftergroup\egroup\originalright}
\newcommand{\N}{\mathbb{N}}
\newcommand{\R}{\mathbb{R}}
\newcommand{\T}{\mathbb{T}}
\newcommand{\C}{\mathbb{C}}
\newcommand{\cP}{\mathcal{P}}
\newcommand{\cL}{\mathcal{L}}
\newcommand{\cF}{\mathcal{F}}
\newcommand{\cH}{\mathcal{H}}
\newcommand{\cC}{\mathcal{C}}
\newcommand{\cS}{\mathcal{S}}
\newcommand{\cG}{\mathcal{G}}
\newcommand{\cR}{\mathcal{R}}
\newcommand{\cE}{\mathcal{E}}
\newcommand{\cI}{\mathcal{I}}
\newcommand{\cJ}{\mathcal{J}}
\newcommand{\cK}{\mathcal{K}}
\newcommand{\sL}{\mathscr{L}}
\newcommand{\al}{\alpha}
\newcommand{\ep}{\varepsilon}
\DeclareMathOperator{\id}{Id} 
\newcommand{\lap}{\Delta} 
\newcommand{\HS}{\mathrm{HS}} 
\DeclareMathOperator{\im}{Im} 
\newcommand{\ZZ}{W} 
\newcommand{\Ld}{L^{\dagger}} 
\newcommand{\ld}{l^{\dagger}} 
\newcommand{\Ldmat}{\mathsf{L}^{\dagger}} 
\newcommand{\Lmat}{\mathsf{L}} 
\newcommand{\Ndm}{N_{\delta^{-}}} 
\newcommand{\post}{\mu^{D_N}} 
\newcommand{\postseq}{\mu_{\mathrm{seq}}^{D_N}} 
\newcommand{\priorseq}{\mu_{\mathrm{seq}}} 
\newcommand{\KZstarnormal}{K_{Z}^{*}K_{Z}^{\vphantom{*}}} 
\newcommand{\dtn}{\Lambda_{\sigma}} 
\newcommand{\ntd}{\mathscr{R}_{\sigma}} 
\newcommand{\dom}{\mathcal{D}} 
\newcommand{\setoneton}{\{1, \ldots, N\}} 
\newcommand{\disk}{\mathbb{D}} 
\newcommand{\ER}{\E^{D_N}\cE_N} 
\newcommand{\EGG}{\E^{D_N}\abs{\cG_N}} 
\newcommand{\set}[1]{\mathsf{#1}}
\DeclarePairedDelimiterX{\iptemp}[2]{\langle}{\rangle}{#1, #2}
\newcommand{\ip}{\iptemp}
\DeclarePairedDelimiterX{\normtemp}[1]{\lVert}{\rVert}{#1}
\newcommand{\norm}{\normtemp}
\DeclarePairedDelimiterX{\abstemp}[1]{\lvert}{\rvert}{#1}
\newcommand{\abs}{\abstemp}
\DeclarePairedDelimiterX{\trtemp}[1]{(}{)}{#1}
\newcommand{\tr}{\operatorname{tr}\trtemp}
\DeclarePairedDelimiterX{\SEtemp}[2]{(}{)}{#1, #2}
\newcommand{\SE}{\operatorname{SE}\SEtemp}
\DeclarePairedDelimiterX{\SGtemp}[1]{(}{)}{#1}
\newcommand{\SG}{\operatorname{SG}\SGtemp}
\newcommand{\defeq}{\coloneqq} 
\newcommand{\eqdef}{\eqqcolon} 
\newcommand{\mmin}{\wedge} 
\newcommand{\mmax}{\vee} 
\newcommand{\condbar}{\, \vert \,}
\newcommand{\lcondbar}{\, \big\vert \,}
\newcommand{\lllcondbar}{\, \bigg\vert \,}
\DeclarePairedDelimiterX{\floor}[1]{\lfloor}{\rfloor}{#1} 
\renewcommand{\P}{\operatorname{\mathbb{P}}} 
\newcommand{\E}{\operatorname{\mathbb{E}}} 
\newcommand{\comp}{\textsf{c}} 
\newcommand{\Var}{\operatorname{Var}} 
\newcommand{\Cov}{\operatorname{Cov}} 
\newcommand{\one}{\mathbbm{1}} 
\newcommand{\diid}{\stackrel{\mathrm{iid}}{\sim}} 
\newcommand{\normal}{\mathcal{N}} 
\newcommand{\avgn}[2]{\overline{{#1}{#2}}^{(N)}} 
\newcommand{\grad}{\nabla}
\def\qas{\quad\text{as}\quad}
\def\qa{\quad\text{and}\quad}
\def\qf{\quad\text{for}\quad}
\def\qw{\quad\text{where}\quad}
\def\qin{\quad\text{in}\quad}
\def\qor{\quad\text{or}\quad}
\newcommand{\sfit}[1]{\textsf{\small{#1}}} 
\setlist[enumerate]{leftmargin=.5in}
\setlist[itemize]{leftmargin=.5in}
\crefname{hypothesis}{Hypothesis}{Hypotheses}
\crefname{assumption}{Assumption}{Assumptions}
\crefname{condition}{Condition}{Conditions}
\crefname{fact}{Fact}{Facts}
\author{Maarten V. de Hoop\thanks{Simons Chair in Computational and Applied Mathematics and Earth Science, Rice University, Houston, TX 77005 USA (\email{mdehoop@rice.edu}).}
	\and Nikola B. Kovachki\thanks{NVIDIA AI, NVIDIA, Santa Clara, CA 95051 USA (\email{nkovachki@nvidia.com}).}
	\and Nicholas H. Nelsen\thanks{Division of Engineering and Applied Science, California Institute of Technology, Pasadena, CA 91125 USA (\email{nnelsen@caltech.edu}, \email{astuart@caltech.edu}).}
	\and Andrew M. Stuart\footnotemark[4]
}
\title{Convergence Rates for Learning Linear Operators from Noisy Data\thanks{Received by the editors August 30, 2021; accepted for publication (in revised form) November 1, 2022.
		\funding{MVdH is supported by the Simons Foundation under the MATH + X program, U.S. Department of Energy, Office of Basic Energy Sciences, Chemical Sciences, Geosciences, and Biosciences Division under grant number DE-SC0020345, the National Science Foundation (NSF) under grant DMS-1815143, and the corporate members of the Geo-Mathematical Imaging Group at Rice University. NHN is supported by the NSF Graduate Research Fellowship Program under grant DGE-1745301. AMS is supported by NSF (grant DMS-1818977) and AFOSR (MURI award number FA9550-20-1-0358 -- Machine Learning and Physics-Based Modeling and Simulation). NBK, NHN, and AMS are supported by NSF (grant AGS-1835860) and ONR (grant N00014-19-1-2408).
}}}
\begin{document}

\maketitle

\begin{abstract}
This paper studies the learning of linear operators between infinite-dimensional Hilbert spaces. The training data comprises pairs of random input vectors in a Hilbert space and their noisy images under an unknown self-adjoint linear operator. Assuming that the operator is diagonalizable in a known basis, this work solves the equivalent inverse problem of estimating the operator's eigenvalues given the data. Adopting a Bayesian approach, the theoretical analysis establishes posterior contraction rates in the infinite data limit with Gaussian priors that are not directly linked to the forward map of the inverse problem. The main results also include learning-theoretic generalization error guarantees for a wide range of distribution shifts. These convergence rates quantify the effects of data smoothness and true eigenvalue decay or growth, for compact or unbounded operators, respectively, on sample complexity. Numerical evidence supports the theory in diagonal and non-diagonal settings.
\end{abstract}

\begin{keywords}
operator learning, linear inverse problems, Bayesian inference, 
posterior consistency, statistical learning theory, distribution shift
\end{keywords}

\begin{MSCcodes}
  62G20, 
  62C10, 
  68T05, 
  47A62 
\end{MSCcodes}

\section{Introduction}\label{sec:intro}
The supervised learning of operators between Hilbert spaces provides a natural framework for the acceleration of scientific computation and discovery. This framework can lead to fast surrogate models that approximate expensive existing models or to the discovery of new models that are consistent with observed data when no first principles model exists. To develop some of the fundamental principles of operator learning, this paper concerns (Bayesian) nonparametric linear regression under random design. Let \( H \) be a real infinite-dimensional Hilbert space and $ L $ be an unknown---possibly unbounded and in general densely 
defined on $H$---self-adjoint linear operator from its domain in $ H $ into 
$H$ itself. We study the following linear operator learning problem.
\subparagraph{Main Problem}\label{prob:intro_model}
\emph{Let \( \{x_n\}\subset H \) be random design vectors and $ \{\xi_n\} $ be noise vectors. Given the training data pairs $ \{(x_n,y_n)\}_{n=1}^{N} $ with sample size $ N\in\N $, where
	\begin{equation}\label{eqn:intro_model}
	y_n=Lx_n + \gamma\xi_n \qf n\in\setoneton\qa \gamma>0\, ,
	\end{equation}
	find an estimator \( L^{(N)} \) of $ L $ that is accurate when evaluated outside of the samples \( \{x_n\} \).
}

The estimation of $ L $ from the data \cref{eqn:intro_model} is generally an ill-posed linear inverse problem \cite{vito2005learning}. In principle, the chosen reconstruction procedure should be consistent: the estimator \( L^{(N)} \) converges to the true \( L \) as \( N\to\infty \). The rate of this convergence is equivalent to the \emph{sample complexity} of the estimator, which determines the efficiency of statistical estimation. The sample complexity \( N(\ep)\in\N \) is the number of samples required for the estimator to achieve an error less than a fixed tolerance \( \ep>0 \). It quantifies the difficulty of \hyperref[prob:intro_model]{Main Problem}.

In modern scientific machine learning problems where operator learning is used, the demand on data from different operator learning architectures often outpaces the availability of computational or experimental resources needed to generate the data. Ideally, theoretical analysis of sample complexity should reveal guidelines for how to reduce the requisite data volume. To that end, the broad purpose of this paper is to provide an answer to the question:
\begin{center}
	\vspace{2mm}
	\emph{What factors can reduce sample size requirements for linear operator learning}?
	\vspace{2mm}
\end{center}

Our goal is not to develop a practical procedure to regress linear operators between infinite-dimensional vector spaces. Various methods already exist for that purpose, including those based on (functional) principal component analysis (PCA) \cite{bhattacharya2020model,crambes2013asymptotics,hormann2015note}. Instead, we aim to strengthen the rather sparse but slowly growing theoretical foundations of operator learning.
	
We overview our approach to solve \hyperref[prob:intro_model]{Main Problem} in \cref{sec:intro_ideas}. We summarize one of our main convergence results in \cref{sec:intro_main_result}. In \cref{sec:intro_examples}, we illustrate examples to which our theory applies. \Cref{sec:intro_literature} surveys work related to ours. The primary contributions of this paper and its organization are given in \cref{sec:intro_contribution,sec:intro_outline}, respectively.

\subsection{Key ideas}\label{sec:intro_ideas}
In this subsection, we communicate the key ideas of our methodology at an informal level and distinguish our approach from similar ones in the literature.

\subsubsection{Operator learning as an inverse problem}\label{sec:intro_ideas_ol_as_ip}
We cast \hyperref[prob:intro_model]{Main Problem} as a Bayesian inverse problem with a \emph{linear operator as the unknown object to be inferred from data}. Suppose the input training data \( \{x_n\} \) from \cref{eqn:intro_model} are independent and identically distributed (i.i.d.) according to a (potentially unknown) centered probability measure \( \nu \) on \( H \) with finite second moment. Let \( \Lambda\colon H\to H \) be the covariance operator of \( \nu \) with orthonormal eigenbasis \( \{\phi_k\} \). Let the \( \{\xi_n\} \) be i.i.d. \( \normal(0,\id_H) \) Gaussian white noise processes independent of \( \{x_n\} \). Writing \( Y=(y_1,\ldots, y_N) \), \( X=(x_1,\ldots, x_N) \), and \( \Xi=(\xi_1,\ldots,\xi_N) \) yields the concatenated data model
\begin{equation}\label{eqn:intro_ideas_ip_operator}
Y=K_{X} L + \gamma \Xi\, .
\end{equation}
The forward operator of this inverse problem is \( K_{X}\colon T \mapsto (Tx_1,\ldots, Tx_N) \). Under a Gaussian prior \( L\sim \normal(0,\Sigma) \), the solution is the Gaussian posterior \( L\condbar (X,Y)\). For a fixed orthonormal basis \( \{\varphi_j\} \) of \( H \), it will be convenient to identify \cref{eqn:intro_ideas_ip_operator} with the countable inverse problem
\begin{equation}\label{eqn:intro_ideas_ip_matrix}
y_{jn} = \sum\nolimits_{k=1}^{\infty}x_{kn}\Lmat_{jk}\ + \gamma\xi_{jn} \qf j\in\N \qa n\in\setoneton\,,
\end{equation}
where \(\xi_{jn}\diid\normal(0,1)\), \( x_{kn}=\ip{\phi_k}{x_n}_{H} \), and \( \Lmat_{jk}=\ip{\varphi_j}{L\phi_k}_{H} \). See \cref{sec:setup_bayes_model} for details.

\subsubsection{Comparison to nonparametric inverse problems}\label{sec:intro_ideas_compare}
In contrast, most theoretical studies of Bayesian inverse problems concern the \emph{estimation of a vector} \( f\in H_1 \) from data
\begin{equation}\label{eqn:intro_ideas_knapik_ip}
Y'=Kf+N^{-1/2}\xi\,,\qw \xi\sim\normal(0,\id_{H_2})
\end{equation}
and \( K\colon H_1\to H_2 \) is a known bounded linear operator between Hilbert spaces \( H_1 \) and \( H_2 \). This is a signal in white noise model. 
Under a prior on \( f \), the asymptotic behavior of the posterior \( f\condbar Y' \) as the noise tends to zero (\( N\to\infty \)) is of primary interest. Many analyses of \cref{eqn:intro_ideas_knapik_ip} consider the \emph{singular value decomposition} (SVD) of \( K\) \cite{agapiou2018posterior,agapiou2014bayesian,cavalier2008nonparametric,knapik2011bayesian,ray2013bayesian}. Projecting \( f \) into its coordinates \( \{f_k\} \) in the basis of right singular vectors \( \{\phi'_k\} \) of \( K \) and writing \( \{Y_j'\} \) for observations of the stochastic process \( Y' \) on the basis of left singular vectors of \( K \) yields
\begin{equation}\label{eqn:intro_ideas_knapik_sequence}
Y_j' = \kappa_j f_j + N^{-1/2}\xi_j \qf j\in\N\, ,
\end{equation}
where the \( \{\xi_j\} \) are i.i.d. \( \normal(0,1) \) and \( \{\kappa_j\} \) are the singular values of \( K \). Obtaining a sequence space model of this form is always possible if \( K \) is a compact operator \cite[sect. 1.2]{cavalier2008nonparametric}.

Some notable differences between the traditional inverse problem \cref{eqn:intro_ideas_knapik_ip} and the operator learning inverse problem \cref{eqn:intro_ideas_ip_operator} are evident. \Cref{eqn:intro_ideas_ip_operator} is directly tied to (functional) regression, while \cref{eqn:intro_ideas_knapik_ip} is not. The unknown \( f \) is a vector while \( L \) is an unknown operator. A more major distinction is that \( K \) in \cref{eqn:intro_ideas_knapik_ip} is deterministic and arbitrary, while \( K_X \) in \cref{eqn:intro_ideas_ip_operator} is a \emph{random forward map} defined by point evaluations. Their sequence space representations also differ. \Cref{eqn:intro_ideas_knapik_sequence} is diagonal with a singly-indexed unknown \( \{f_j\} \), while \cref{eqn:intro_ideas_ip_matrix} is non-diagonal (because the SVD of \( K_X \) was not invoked) with a doubly-indexed unknown \( \{\Lmat_{jk}\} \).
Thus, our work deviates significantly from existing studies.

\subsubsection{Diagonalization leads to eigenvalue learning}\label{sec:intro_ideas_diag}
The technical core of this paper concerns the sequence space representation \cref{eqn:intro_ideas_ip_matrix} of \hyperref[prob:intro_model]{Main Problem} in the ideal setting that a diagonalization of $L$ is known.
\begin{assumption}[diagonalizing eigenbasis given for $L$]\label{assump:intro_ideas_diagonal}
	The unknown linear operator $L$ from \hyperref[prob:intro_model]{Main Problem} is diagonalized in the known orthonormal basis $\{\varphi_j\}_{j\in\N}\subset H$.
\end{assumption}
Under this assumption and denoting the eigenvalues of \( L \) by \( \{l_{j}\} \), \cref{eqn:intro_ideas_ip_matrix} simplifies to
\begin{equation}\label{eqn:intro_ideas_ip_diagonal}
y_{jn} = \ip{\varphi_j}{x_n}_{H}\,l_j\ + \gamma\xi_{jn} \qf j\in\N \qa n\in\setoneton\,.
\end{equation}
In general, the random coefficient \( \ip{\varphi_j}{x_n}_{H} \) depends on every \( \{x_{kn}\}_{k\in\N} \) from \cref{eqn:intro_ideas_ip_matrix}. 
To summarize, under \cref{assump:intro_ideas_diagonal} we obtain a white noise sequence space regression model with \emph{correlated random coefficients}. Inference of the full operator is reduced to only that of its eigenvalue sequence.  \Cref{eqn:intro_ideas_ip_diagonal} is at the heart of our analysis of linear operator learning. The convergence results we establish for this model may also be of independent interest.

Our proof techniques in this diagonal setting closely follow those in the paper \cite{knapik2011bayesian}, which studies posterior contraction for \cref{eqn:intro_ideas_knapik_sequence} in a simultaneously diagonalizable Bayesian setting. However, our work exhibits some crucial differences with \cite{knapik2011bayesian} which we now summarize.

\begin{enumerate}[label={(D\arabic*)}]
	\item \label{item:intro_ideas_diag_map}
	(\sfit{forward map}) The coefficients \( \{\ip{\varphi_j}{x_n}_{H}\} \) in our problem \cref{eqn:intro_ideas_ip_diagonal} are random variables (r.v.s), while in \cite{knapik2011bayesian} the singular values \( \{\kappa_j\} \) in \cref{eqn:intro_ideas_knapik_sequence} are fixed by \( K \). Also, the law of \( \{\ip{\varphi_j}{x_n}_{H}\} \) may not be known in practice; only the samples \( \{x_n\} \) may be given.
	
	\item \label{item:intro_ideas_diag_link}
	(\sfit{link condition}) Unlike in \cite{knapik2011bayesian}, our prior covariance operator \( \Sigma \) \emph{is not linked to the SVD} of the forward map \( K_X \). That is, we do not assume simultaneous diagonalizability.
	
	\item \label{item:intro_ideas_diag_prior}
	(\sfit{prior support}) The Gaussian prior we induce on \( \{l_j\} \) is supported on a (potentially) much larger sequence space in the scale \( \cH^{s} \) (relative to \( \{\varphi_j\} \), with \( s\in\R \)),\footnote{
		The Sobolev-like sequence Hilbert spaces \( \cH^s=\cH^s(\N;\R) \) are defined for \( s\in\R \) by
		\[
		\cH^{s}(\N;\R)\defeq \bigl\{v\colon\N\to\R \lcondbar \textstyle\sum_{j=1}^{\infty}j^{2s}\abs{v_j}^{2}<\infty\bigr\}\,.
		\]
		They are equipped with the natural
		$ \{j^{s}\} $-weighted $ \ell^2(\N;\R) $ inner-product and norm. We will usually interpret these spaces as defining a smoothness scale \cite[sect. 2]{gugushvili2020bayesian} of vectors relative to the orthonormal basis \( \{\varphi_j\} \) of \( H \).
	}
	instead of just the space \( \ell^{2}(\N;\R) \) (relative to \( \{\phi'_k\} \)) charged by the prior on \( \{f_j\} \) in \cite{knapik2011bayesian}.
	
	\item \label{item:intro_ideas_diag_norm}
	(\sfit{reconstruction norm}) Solution convergence for \cref{eqn:intro_ideas_ip_diagonal} is in \( \cH^{-s} \) norms relative to \( \{\varphi_j\} \), while only the \( \ell^{2}(\N;\R) \) norm relative to \( \{\phi'_k\} \) (i.e., \( H_1 \) norm) is considered in \cite{knapik2011bayesian}.
\end{enumerate}

These differences deserve further elaboration.
\subparagraph*{\Cref{item:intro_ideas_diag_map}}
If \( x_n\in H \) almost surely (a.s.), then \( \ip{\varphi_j}{x_n}_{H}\to 0 \) a.s. as \( j\to\infty \) in \cref{eqn:intro_ideas_ip_diagonal}, just as \( \kappa_j \to 0 \) if \( K \) in \cref{eqn:intro_ideas_knapik_ip} is compact. However, we later observe that our \( K_X \) \emph{is not compact}.

\subparagraph*{\Cref{item:intro_ideas_diag_link}}
The authors in \cite{knapik2011bayesian} assume that the eigenbasis of the prior covariance of \( f \) is precisely \( \{\phi'_k\} \), the right singular vectors of \( K \) in \cref{eqn:intro_ideas_knapik_ip}. This direct link condition between the prior and \( K \) ensures that the implied prior (and posterior) on \( \{f_j\} \) is an infinite product measure. Our analysis of \cref{eqn:intro_ideas_ip_diagonal} still induces an infinite product prior on \( \{l_j\} \) \emph{without using the SVD of the forward operator \( K_X \)}. Instead, we make mild assumptions that only weakly link \( K_X \) to the prior covariance operator \( \Sigma \). See \cref{eqn:intro_ideas_linkcov} for a relevant smoothness condition.

\subparagraph*{\Cref{item:intro_ideas_diag_prior}}
The reason we work with a sequence prior having support on sets larger than \( \ell^2 \) is to include \emph{unbounded operators} (with eigenvalues \( \abs{l_j}\to\infty \) as \( j\to\infty \)) in the analysis.\footnote{Note, however, that unbounded operators with continuous spectra \cite{colbrook2021computing} are beyond the scope of this paper.}

\subparagraph*{\Cref{item:intro_ideas_diag_norm}}\label{par:item:intro_ideas_diag_norm}
Only the \( H_1 \) estimation error is considered in \cite{knapik2011bayesian} because the unknown quantity is a vector \( f\in H_1 \). Since our unknown is an operator, we also consider the \emph{prediction error} \cite{cai2006prediction} on new test inputs (see \cref{sec:setup_bayes_testerror}). This relates to the \( \cH^{-s} \) norms in \ref{item:intro_ideas_diag_norm}.

\subsection{Main result}\label{sec:intro_main_result}
Here and in the sequel, we assume that there is some fixed ground truth operator that underlies the observed output data.
\begin{assumption}[true linear operator]\label{assump:intro_ideas_truth}
	The data \( Y \), observed as \( \{y_{jn}\} \) in \cref{eqn:intro_ideas_ip_diagonal}, is generated according to \cref{eqn:intro_ideas_ip_operator} for a fixed self-adjoint linear operator \( L=\Ld \) with eigenvalues \( \{\ld_j\} \).
\end{assumption}

Under \cref{eqn:intro_ideas_ip_diagonal}, we study the performance of the posterior \( \{l_j\}\condbar (X,Y)\) (and related point estimators) for estimating the true \( \{\ld_j\}\) in the limit of infinite data. The following concrete theorem is representative of more general convergence results established later in the paper.
\begin{theorem}[asymptotic convergence rate with Gaussian design]\label{thm:intro_ideas_thm}
	Suppose \cref{assump:intro_ideas_diagonal,assump:intro_ideas_truth} hold with \( \{\ld_j\}\in\cH^{s} \) for some \( s\in\R \). Let \( \nu=\normal(0,\Lambda) \) be a Gaussian measure satisfying
	\begin{equation}\label{eqn:intro_ideas_linkcov}
	c_1^{-1}j^{-2\al}\leq \ip{\varphi_j}{\Lambda\varphi_j}_{H} \leq c_1j^{-2\al} \quad\text{for all sufficiently large}\ \,  j\in\N
	\end{equation}
	for some \( c_1\geq 1 \) and \( \al > 1/2 \). Let \( \bigotimes_{j=1}^{\infty}\normal(0,\sigma_j^2) \) be the prior on \( \{l_j\} \) in \cref{eqn:intro_ideas_ip_diagonal} with variances \( \{\sigma_j^2\} \) satisfying \( c_2^{-1}j^{-2p}\leq \sigma_j^2\leq c_2j^{-2p} \) for all sufficiently large \( j\in\N \) for some \( c_2\geq 1 \) and \( p\in\R \). Denote by \( P^{D_N} \) the posterior distribution for \( \{l_j\} \) arising from the observed data \( D_N\defeq (X,Y) \).
	Fix \( \al'\in [0,\al+1/2) \).  If \( \min\{\al,\al'\} + \min\{p - 1/2, s\} >0 \), then there exists a constant \( C>0 \), independent of the sample size \( N \), such that
	\begin{equation}\label{eqn:intro_ideas_thm}
	\E^{D_N}\E^{\{l_i^{(N)}\}_{i=1}^{\infty}\sim P^{D_N}}\sum\nolimits_{j=1}^{\infty} j^{-2\al'}\abs[\big]{\ld_j - l_j^{(N)}}^2
	\leq CN^{-\bigl(\frac{\al'+\min\{p-1/2, s\}}{\al+p}\bigr)}
	\end{equation}
	for all sufficiently large \( N \). The first expectation in \cref{eqn:intro_ideas_thm} is over the joint law of \( D_N \).
\end{theorem}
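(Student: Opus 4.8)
\textbf{Reduction to scalar conjugate updates.} Conditionally on the design $X$, the diagonal model \cref{eqn:intro_ideas_ip_diagonal} is a family of independent scalar regressions through the origin, one per mode $j$, with observed covariates $g_{jn}\defeq\ip{\varphi_j}{x_n}_H$; both the likelihood and the product prior $\bigotimes_j\normal(0,\sigma_j^2)$ factorize over $j$, so $P^{D_N}=\bigotimes_j\normal(m_j,v_j)$ with the standard Gaussian conjugate update
\begin{equation*}
v_j=\frac{\gamma^2\sigma_j^2}{\gamma^2+\sigma_j^2 S_j}\,,\qquad m_j=\frac{\sigma_j^2 T_j}{\gamma^2+\sigma_j^2 S_j}\,,\qquad S_j\defeq\sum_{n=1}^N g_{jn}^2\,,\quad T_j\defeq\sum_{n=1}^N g_{jn}y_{jn}\,,
\end{equation*}
as recorded in \cref{sec:setup_bayes_model}. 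Substituting $T_j=S_j\ld_j+\gamma Z_j$ with $Z_j\defeq\sum_n g_{jn}\xi_{jn}$, using $\E^{l_j^{(N)}\sim P^{D_N}}\abs{\ld_j-l_j^{(N)}}^2=(\ld_j-m_j)^2+v_j$, and splitting $\ld_j-m_j$ into its $Z_j$-free part $\gamma^2\ld_j/(\gamma^2+\sigma_j^2 S_j)$ and its $Z_j$-linear part (whose cross term vanishes because $\E[Z_j\mid X]=0$ and $S_j$ is $X$-measurable), the left side of \cref{eqn:intro_ideas_thm} equals
\begin{equation*}
\sum_{j=1}^\infty j^{-2\al'}\biggl((\ld_j)^2\,\E\Bigl[\tfrac{\gamma^4}{(\gamma^2+\sigma_j^2 S_j)^2}\Bigr]+\gamma^2\sigma_j^4\,\E\Bigl[\tfrac{S_j}{(\gamma^2+\sigma_j^2 S_j)^2}\Bigr]+\E\Bigl[\tfrac{\gamma^2\sigma_j^2}{\gamma^2+\sigma_j^2 S_j}\Bigr]\biggr)\,,
\end{equation*}
where I used $\mathrm{Law}(Z_j\mid X)=\normal(0,S_j)$. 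By Tonelli each summand involves only the \emph{marginal} law $S_j/\lambda_j\sim\chi^2_N$, $\lambda_j\defeq\ip{\varphi_j}{\Lambda\varphi_j}_H$, so no joint control of $\{S_j\}_j$ is needed and $\Lambda$ need not be $\{\varphi_j\}$-diagonal.

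\textbf{Mode-wise bounds.} Each of the three expectations is at most $1$ times its natural scale ($(\ld_j)^2$, $\sigma_j^2$, $\sigma_j^2$), and also at most $\gamma^2\E[S_j^{-1}]\asymp(\lambda_j N)^{-1}$, respectively $\gamma^4\sigma_j^{-4}\E[S_j^{-2}]\asymp(\sigma_j^4\lambda_j^2 N^2)^{-1}$ (finite once $N>4$), times that same scale; the middle (noise-propagation) term is moreover at most $v_j$ pointwise in $S_j$. Taking the smaller of the two upper bounds and invoking \cref{eqn:intro_ideas_linkcov} so that $\lambda_j\asymp j^{-2\al}$, together with $\sigma_j^2\asymp j^{-2p}$, gives for all large $N$ that $\E[v_j]\lesssim\min\{j^{-2p},\,j^{2\al}/N\}$ and $(\ld_j)^2\E[\gamma^4/(\gamma^2+\sigma_j^2 S_j)^2]\lesssim(\ld_j)^2\min\{1,\,j^{4p+4\al}/N^2\}$. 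All three terms change character at the critical index $J_N\defeq\floor{N^{1/(2(\al+p))}}$, where $\E[\sigma_j^2 S_j]\asymp\gamma^2$; here $\al+p>1/2$ is forced by the hypotheses, and $\al>1/2$ makes $\Lambda$ trace class so that $x_n\in H$ a.s. (The finitely many modes where the polynomial equivalences fail contribute $O(N^{-1})$, which is negligible since the target exponent is $<1$.)

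\textbf{Summation against the weights.} Split $\sum_j j^{-2\al'}(\cdots)$ at $J_N$. The posterior-spread and noise contributions total $\tfrac1N\sum_{j\le J_N}j^{2\al-2\al'}+\sum_{j>J_N}j^{-2p-2\al'}\asymp N^{-((p-1/2)+\al')/(\al+p)}$, the first sum handled using $\al'<\al+1/2$ and the second using $\al'+p>1/2$. For the shrinkage bias, in each block one pulls out the largest power of $j$ and controls the remainder by $\sum_j j^{2s}(\ld_j)^2<\infty$, i.e.\ $\{\ld_j\}\in\cH^s$; this yields $J_N^{-2(\al'+s)}\asymp N^{-(\al'+s)/(\al+p)}$ when the extracted exponent is positive and $O(N^{-2})$ otherwise. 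Adding the blocks gives exactly the bound $N^{-(\al'+\min\{p-1/2,s\})/(\al+p)}$ of \cref{eqn:intro_ideas_thm}. The remaining hypotheses $\al'\in[0,\al+1/2)$ and $\min\{\al,\al'\}+\min\{p-1/2,s\}>0$ are precisely what make every series above convergent (or sub-polynomially growing) and the final exponent strictly positive; the form $\min\{\al,\al'\}$ (rather than merely $\al'$) is the one inherited from the more general, correlated-mode theorem and here guarantees in particular $\al'+s>0$ and $\al'+p>1/2$.

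\textbf{Main obstacle.} The delicate point is not any single estimate but the interaction between the \emph{inverted} random quantity $(\gamma^2+\sigma_j^2 S_j)^{-1}$ (for which Jensen runs the wrong way, so one cannot naively replace $S_j$ by $N\lambda_j$) and the requirement that the resulting $j$-dependent bounds be summed against $j^{-2\al'}$: the inverse-$\chi^2$ moment bounds must hold with constants uniform in $j$, the single critical scale $J_N$ common to all three terms must be identified, and several parameter regimes (sign of $p$; whether $s\gtrless p-1/2$; whether $\al'\gtrless\al$) must be tracked so that the conditions come out sharp. The transition zone $j\asymp J_N$, where neither the observation-dominated nor the prior-dominated approximation is tight, is where the bookkeeping is heaviest.
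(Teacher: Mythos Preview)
Your proposal is correct and follows essentially the same route as the paper: the three-term decomposition into squared bias, noise propagation, and posterior spread (the paper's $\cI_1,\cI_2,\cI_3$ in \cref{eqn:proofs_iall}), the split at the critical index $J_N=\floor{N^{1/(2(\al+p))}}$, and the use of inverse chi-square moments to control $\E[S_j^{-1}]$ and $\E[S_j^{-2}]$ (the paper's \cref{lem:chisq}). The only cosmetic difference is that the paper first proves the more general \cref{thm:upper-expect} under the abstract moment condition in \cref{assump:theory_expectation_train} and then specializes to Gaussian design by verifying that condition via \cref{lem:chisq}, whereas you work directly in the Gaussian case from the start; the underlying estimates (\cref{lem:knapik_1,lem:knapik_2} in the paper, your block sums) are the same.
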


\Cref{eqn:intro_ideas_thm} shows that, on average, posterior sample eigenvalue estimates converge in \( \cH^{-\al'} \) to the true eigenvalues of \( \Ld \) in the infinite data limit. The hypothesis \cref{eqn:intro_ideas_linkcov}, which controls the regularity of the data $\{x_n\}$, is immediately satisfied if, e.g., \( \Lambda \) is a Mat\'ern-like covariance operator with eigenvectors \( \{\varphi_j\} \). \Cref{thm:intro_ideas_thm}, whose proof is in \cref{app:proofs}, is a consequence of \cref{thm:upper-expect}, which is valid for a much larger class of input data measures.

Nonetheless, \cref{thm:intro_ideas_thm} nearly tells the whole story. The convergence rate exponent in \cref{eqn:intro_ideas_thm} shows that the regularity of the ground truth, data, and prior each have an influence on sample complexity. \Cref{fig:intuition_lap} summarizes this complex relationship. The figure, and this paper more generally, reveals three fundamental principles of (linear) operator learning: 
\begin{enumerate}[label={(P\arabic*)}]
	\item \label{item:intro_ideas_result_output}
	(\sfit{smoothness of outputs}) The ground truth operator becomes statistically more efficient to learn whenever the smoothness of its (noise-free) outputs \emph{increases}. Moreover, as the degree of smoothing of the operator \emph{increases}, sample complexity improves.
	
	\item \label{item:intro_ideas_result_input}
	(\sfit{smoothness of inputs}) \emph{Decreasing} the smoothness of input training data improves sample complexity (in norms that do not depend on the training distribution itself).\footnote{
		If the norm used to measure error depends on the training data distribution, this may no longer be true. For example, in-distribution error (train and test on the same distribution) would correspond in \cref{thm:intro_ideas_thm} to setting \( \al'=\al \) (see \cref{sec:setup_bayes_testerror}). In this case, \emph{increasing} \( \al \) would improve sample complexity.
	}
	
	\item \label{item:intro_ideas_result_shift}
	(\sfit{distribution shift}) As the smoothness of samples from the input test distribution \emph{increases}, average out-of-distribution prediction error improves.
\end{enumerate}

Below, we discuss how the principles \ref{item:intro_ideas_result_output}~to~\ref{item:intro_ideas_result_shift} manifest in \cref{thm:intro_ideas_thm} and \cref{fig:intuition_lap}.

\subparagraph*{\cref{item:intro_ideas_result_output}}
In \cref{thm:intro_ideas_thm}, the left side of~\cref{eqn:intro_ideas_thm} is equivalent to the expected prediction error over some input test distribution (see \cref{sec:setup_bayes_testerror} for details). Increasing \( \al' \) increases the regularity of test samples. Assuming for simplicity that \( s=p-1/2 \), the convergence rate in \cref{eqn:intro_ideas_thm} is \( N^{-(\al' + s)/(\al + s + 1/2)} \) as \( N\to\infty \). Thus, besides large \( \al' \), it is beneficial to have large regularity exponents \( \al'+s \) of the operator's evaluation on sampled test inputs or large regularity exponents \( s \) of the true operator's eigenvalues. Indeed, \cref{fig:sweep_rate_al,fig:sweep_rate_out,fig:sweep_rate_forward_inverse} suggest that unbounded operators (whose eigenvalues grow without bound) are more difficult to learn than bounded (eigenvalues remain bounded) or compact ones (eigenvalues decay to zero).

\subparagraph*{\cref{item:intro_ideas_result_input}}
\emph{Training inputs} with low smoothness are favorable. This is quantified in \cref{thm:intro_ideas_thm} by \emph{decreasing} \( \al \), which means that the \( \{x_n\} \) become ``rougher'' (\cref{fig:sweep_rate_al}).

\subparagraph*{\cref{item:intro_ideas_result_shift}}
\Cref{fig:sweep_rate_out} illustrates that \emph{increasing} \( \al' \) in \cref{thm:intro_ideas_thm} improves the error.

We reinforce \cref{item:intro_ideas_result_output,item:intro_ideas_result_input,item:intro_ideas_result_shift} throughout the rest of the paper.

\begin{figure}[htbp]%
	\centering
	\subfloat[Varying $ \al $ and \( s \)]{
		\includegraphics[width=0.32\textwidth]{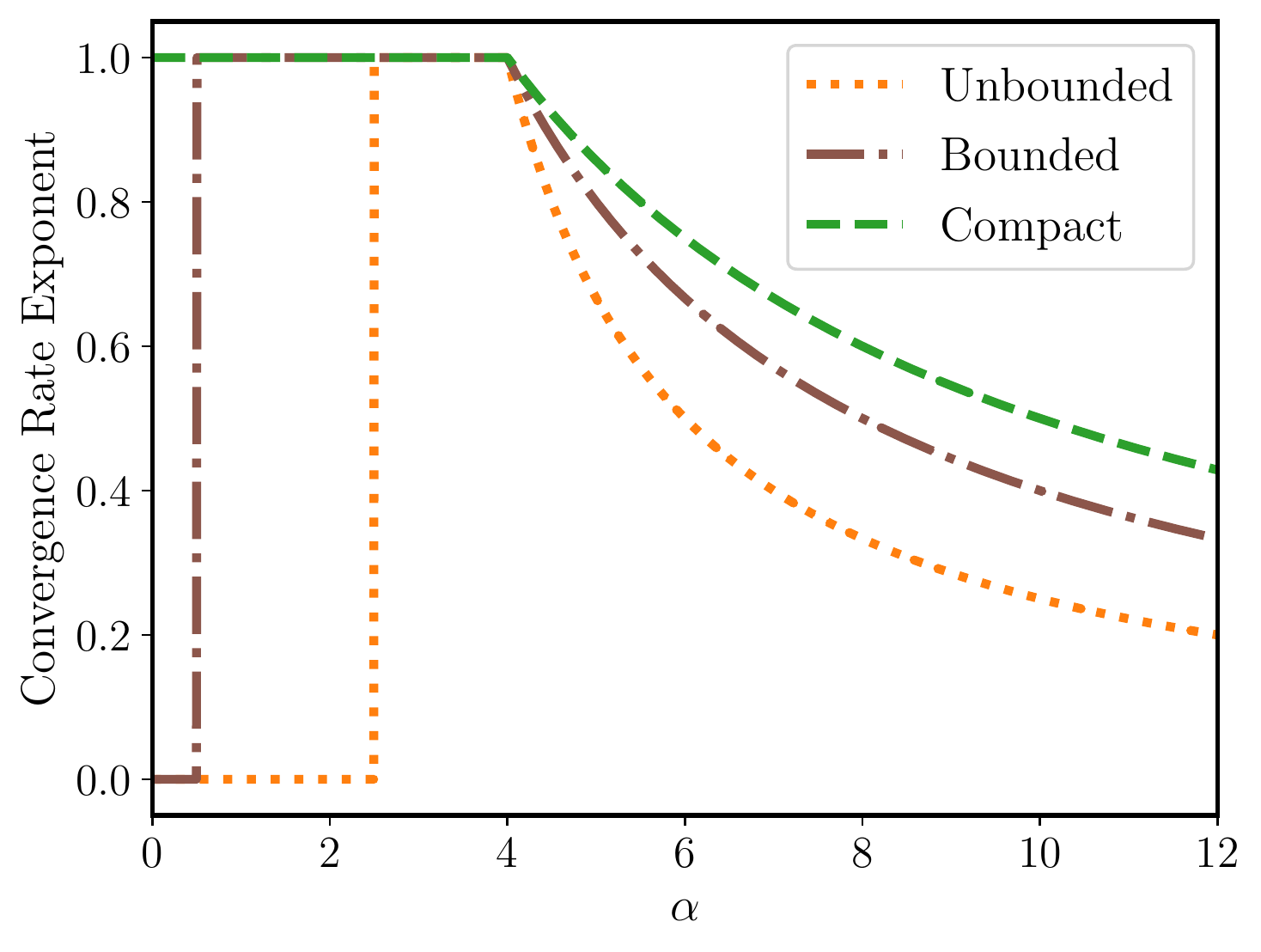}
		\label{fig:sweep_rate_al}}\hfill%
	\subfloat[Varying $ \al' $ and \( s \)]{
		\includegraphics[width=0.304\textwidth]{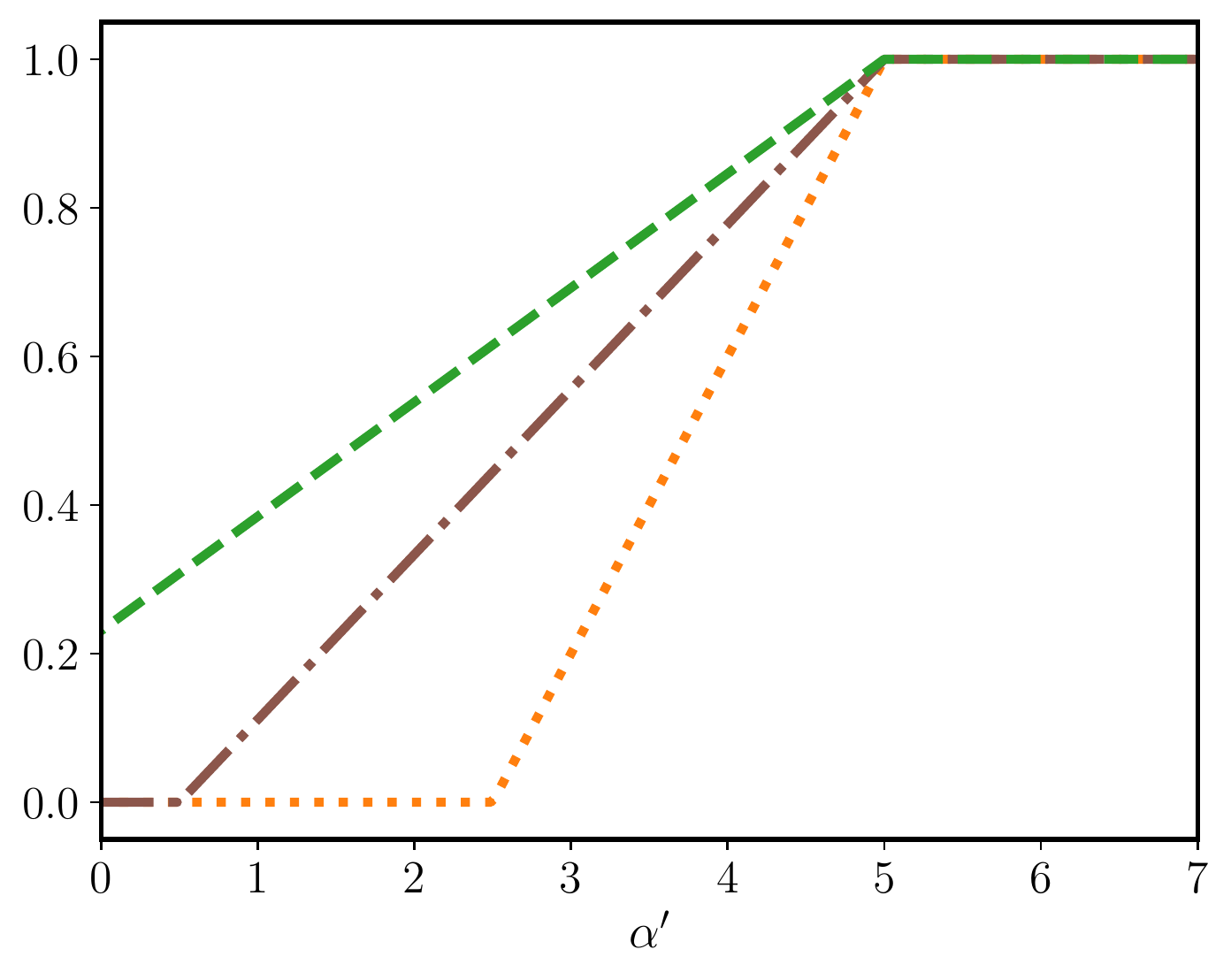}
		\label{fig:sweep_rate_out}}\hfill%
	\subfloat[Varying $ p $, \( \al \), and \( s \)]{
		\includegraphics[width=0.303\textwidth]{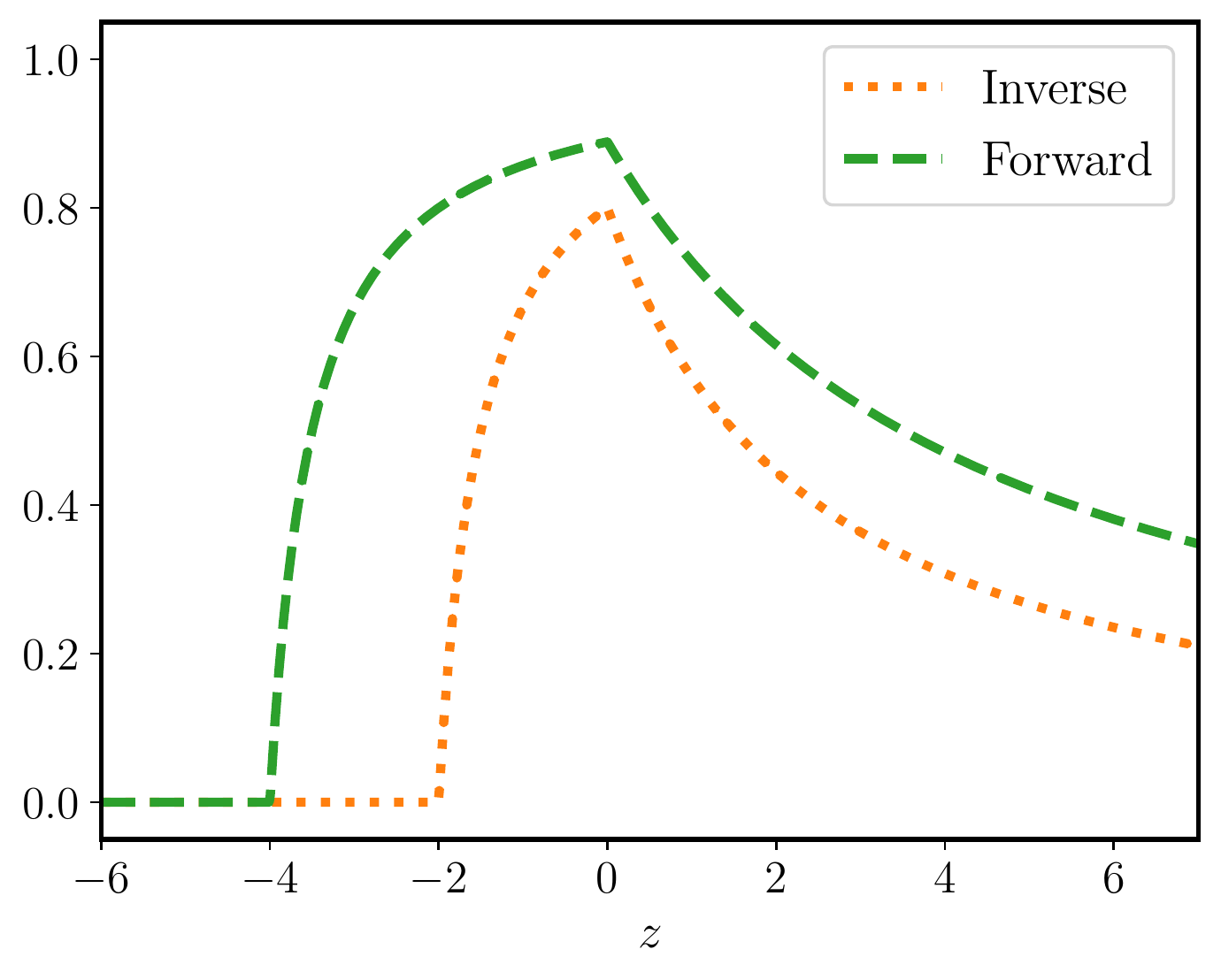}
		\label{fig:sweep_rate_forward_inverse}}
	\caption{Fundamental principles of linear operator learning. The theoretical convergence rate exponents (from \cref{thm:theory_expectation_rvsharp}) corresponding to unbounded $( -\lap ,\, s<-2.5)$, bounded $ (\id,\, s<-1/2) $, and compact $ ((-\lap)^{-1},\, s<1.5) $ true operators are displayed (see principle \ref{item:intro_ideas_result_output} and \cref{sec:numerics_within}). With \( p=s+1/2 \), \cref{fig:sweep_rate_al} \( (\al'=4.5) \) and \cref{fig:sweep_rate_out} \( (\al=4.5) \) illustrate the effects that varying input training data and test data smoothness have on convergence rates, respectively (principles \ref{item:intro_ideas_result_input}~and~\ref{item:intro_ideas_result_shift}). \Cref{fig:sweep_rate_forward_inverse} shows that learning the unbounded ``inverse map'' $-\lap$ $(\text{with } \al =\al'= 4.5) $ is always harder than learning the compact ``forward map'' $(-\lap)^{-1}$ $(\text{with } \al=\al'=2.5) $ as the shift $ z=p-s-1/2 $ in prior regularity is varied (\cref{par:lit_inv_operator}).
	}
	\label{fig:intuition_lap}
\end{figure}

\subsection{Examples}\label{sec:intro_examples}
Although quite a strong assumption, the known diagonalization from \cref{assump:intro_ideas_diagonal} is still realizable in practice. For instance, there may be prior knowledge that the data covariance operator commutes with the true operator (and hence shares the same eigenbasis) or that the true operator obeys known physical principles (e.g., commutes with translation or rotation operators). Regarding the latter, in \cite{portone2021bayesian} the authors infer the eigenvalues of a differential operator closure for an advection-diffusion model from indirect observations. As in \cite{trabs2018bayesian}, the operator could be known up to some uncertain parameter. This is the case for several smoothing forward operators that define commonly studied linear inverse problems, including the severely ill-posed inverse boundary problem for the Helmholtz equation with unknown wavenumber parameter \cite[sect. 5]{agapiou2014bayesian} or the inverse heat equation with unknown scalar diffusivity parameter \cite[sect. 6.1]{trabs2018bayesian}. In both references, the eigenbases are already known. Thus, our learning theory applies to these uncertain operators: taking \( s \) and \( p \) large enough in \cref{eqn:intro_ideas_thm} yields prediction error rates of convergence as close to \( N^{-1} \) as desired.

More concretely, the theory  in this paper may be applied directly to the following examples.
\subsubsection{Blind deconvolution}
Periodic deconvolution on the \( d \)-dimensional torus \( \T^d \) is a linear inverse problem that arises frequently in the imaging sciences. The goal is to recover a periodic signal \( f\colon \T^d\to\C \) from noisy measurements
\[
y= \mu*f + \eta\,,\qw \mu*f\defeq \int_{\T^d}f(\cdot - t)\,\mu(dt) \quad \text{and $\eta$ is noise},
\]
of its convolution with a filter \( \mu \). The filter may be identified with a periodic signal or more generally with a signed measure \cite[sect. 6.2]{trabs2018bayesian}. However, \( \mu \) is sometimes unknown; this leads to \emph{blind} or \emph{semi-blind deconvolution}. One path forward is to first estimate the smoothing operator \( K_{\mu}\colon f\mapsto \mu * f \) from many random \( (f,y) \) pairs under the given model. By the known translation-invariance of the problem, \( K_{\mu} \) is diagonalized in the complex Fourier basis. Inference is then reduced to estimating the Fourier coefficients \( \{\mu_j\} \) of \( \mu\), which are the eigenvalues of \( K_{\mu} \). Since \( \{\mu_j\}\in\cH^s \) for some \( s\in\R \), \cref{thm:intro_ideas_thm} provides a convergence rate.

\subsubsection{Radial EIT}
\emph{Electrical impedance tomography} (EIT) is a non-invasive imaging procedure that is used in medical, industrial, and geophysical applications \cite{mueller2012linear}. Abstractly, EIT concerns the following severely ill-posed nonlinear inverse problem. Let \( \disk\subset \R^2 \) be the unit disk and let \( \sigma\colon \disk\to \R_{>0}\) be the strictly positive electrical conductivity of a medium. With electric potential \( u\colon \disk\to \R \) governed by the elliptic partial differential equation (PDE)
\[
-\grad\cdot(\sigma\grad u) = 0\qin \disk\,,
\]
the goal is to reconstruct the unknown conductivity \( \sigma \) in \( \disk \) from voltage and current boundary measurements of \( u \). These are modeled (to infinite precision) by the linear operators
\[
\dtn\colon u\vert_{\partial \disk}\mapsto \sigma\frac{\partial u}{\partial n}\Big\vert_{\partial \disk} \qor \ntd\colon \sigma\frac{\partial u}{\partial n}\Big\vert_{\partial \disk} \mapsto  u\vert_{\partial \disk}\,,
\]	
where \( \partial/\partial n \) is the outward normal derivative. In practical EIT, either \( \dtn \) or \( \ntd \) must be recovered from finite data. One way to solve this \emph{data completion} step \cite{bui2022bridging} involves making random boundary measurements and employing operator learning \cref{eqn:intro_model}. If \( \sigma \) is \emph{radial}, then \( \dtn \) and \( \ntd \) are diagonalized in the complex Fourier basis over \( \partial \disk= \T^1 \) \cite[sect. 13.1]{mueller2012linear}. In this case, the theory in this paper immediately applies to learn the eigenvalues of both operators.

\subsection{Related work}\label{sec:intro_literature}
A natural setting to apply operator learning is one in which the ambient Hilbert space $H$ comprises real-valued
functions over a domain $ \set{D}\subset\R^d$. For example, there is an emerging body of work focused on learning surrogates for forward, typically nonlinear, solution operators of PDEs
\cite{adcock2020deep,bhattacharya2020model, korolev2021two,li2020neural,lu2019deeponet, nelsen2020random,o2020derivative,schwab2019deep}. In the context
of dynamical systems, there is literature focused on learning the Koopman operator or its generator, both linear operators,
from time series data \cite{brunton2016discovering,giannakis2019data,klus2020data,klus2020eigendecompositions,patel2020physics}. There also is interest in speeding up (Bayesian) inversion techniques with forward surrogates \cite{li2020neural} and in directly learning regularizers for inversion \cite{alberti2021learning,arridge2019solving} (or even entire regularized inverse solution operators \cite{aspri2019data,colbrook2022difficulty,de2019deep}). However, more theory is needed to quantify the difficulty of learning forward versus inverse operators that arise in these contexts. Some sharp theory already exists for nonlinear operator learning. For example, the authors of \cite{caponnetto2007optimal,rastogi2020convergence} establish optimal convergence rates for direct and inverse least squares regression problems with both infinite-dimensional input \emph{and} output spaces under the condition that point evaluation is a Hilbert--Schmidt operator. However, this condition never holds when $ H $ is infinite-dimensional in our linear operator setting \cref{eqn:intro_model}.

We now highlight three subfields that are closely linked to our statistical framework.

\paragraph*{Linear operator learning}
The study of linear function-to-function models within functional data analysis (FDA) \cite{ramsay2005fda} is well-established \cite{crambes2013asymptotics,hormann2015note,reimherr2015functional,wang2020functional1}. Much of this work concerns
the setting $ H=L^2((0,1);\R) $ and linear models based on kernel integral 
operators under colored noise. Operator estimation is then reduced to learning the kernel, usually in a reproducing kernel Hilbert space (RKHS) framework. Linear operator learning has also been considered in machine learning \cite{abernethy2009new}, particularly in the context of conditional expectation operators \cite{mollenhauer2020nonparametric} and conditional mean embeddings \cite{grunewalder2012conditional,klebanov2020rigorous,song2009hilbert}.
The authors of \cite{hormann2015note,reimherr2015functional} study functional linear regression with a spectral operator estimator. This allows them to obtain consistency 
of the prediction error assuming only boundedness of the true 
operator \cite{hormann2015note},
rather than compactness as assumed in much of the FDA literature. Convergence rates are established in \cite{reimherr2015functional}. 
While unbounded operators are not considered in these two works, 
their approaches could likely be modified to handle them.
Relatedly, the authors of \cite{tabaghi2019learning} and \cite{boulle2021learning} share our motivations. The former establishes sample complexities for learning Schatten-class compact operators (motivated by inverse problem solution operators) while the latter for learning compact operators associated to Green's functions of elliptic PDEs (motivated by PDE discovery). Our theory also treats these types of operators but goes further by proving sample complexities for the direct learning of \emph{unbounded operators}, which are of primary interest in these papers (the inverse operator in the former and the partial differential operator in the latter).

\paragraph*{Inverse operator learning}\label{par:lit_inv_operator}
The direct learning of solution operators of inverse problems is currently a popular research area, catalyzed by the success of deep neural networks \cite{arridge2019solving,bubba2020deep,de2019deep,fan2020solving}. However, theoretical analysis in this area is lacking. One difficulty is the interplay between the ill-posedness of the learning and ill-posedness of the inverse problem itself. For a compact operator \( T \), our diagonal theory suggests that learning \( L=T \) under model \cref{eqn:intro_model} is easier than learning the unbounded inverse operator \( L=T^{-1} \) under the same model (\cref{fig:sweep_rate_forward_inverse}). Although less common than the former, the latter setting could arise from noisy differentiation of time series in PDE system identification, for example. One limitation of our theory is that it does not account for \emph{errors-in-covariates} that distinguishes true inverse operator learning, where (a regularized version of) $ T^{-1} $ must be estimated only from noisy forward map samples \cref{eqn:intro_model} with \( L=T \). Total least squares \cite{golub1980analysis} is one solution approach in finite dimensions. The infinite-dimensional setting was considered in \cite{bleyer2013double} but with non-Bayesian methods. Regardless, inverse operator learning in this challenging setting is an important area for future research.

\paragraph*{Bayesian nonparametric statistics}
Although the theoretical analysis of inverse problems with linear operator unknowns is largely absent from the Bayesian nonparametrics literature (see \cref{sec:intro_ideas_compare,sec:intro_ideas_diag}), this literature still has some similarities with \cref{eqn:intro_model,eqn:intro_ideas_ip_operator}.
Many works go beyond \cite{knapik2011bayesian} by deriving posterior contraction rates for problem \cref{eqn:intro_ideas_knapik_ip} without assuming simultaneous diagonalizability of the prior covariance and the forward operator. In \cite{ray2013bayesian}, the author studies linear inverse problems in a non-conjugate setting. However, knowledge of the forward map's SVD is used heavily in the analysis even though the prior is (in one case) represented in a non-SVD basis (one comprised of finite linear combinations of singular vectors). These ideas are generalized in \cite{gugushvili2020bayesian} to priors linked to smoothness scales instead of the SVD. For Gaussian priors not linked to the SVD, new methods were introduced in \cite{monard2019efficient} that yield optimal posterior performance for \( X \)-ray transform inverse problems. These techniques were refined for general linear inverse problems in \cite{giordano2020bernstein}. However, the previous two papers focus on semiparametric inference (i.e., linear functionals) instead of full nonparametric reconstruction (our main interest). The closest work to ours is \cite{trabs2018bayesian}. There, the author studies a linear inverse problem in which the forward map is only known up to an uncertain parameter \( \theta \). Given a noisy observation of \( \theta \) in addition to data of the form \cref{eqn:intro_ideas_knapik_ip}, the author analyzes a Bayesian joint reconstruction procedure. Other papers that use Gaussian priors not linked to the SVD include \cite{agapiou2013posterior,agapiou2021designing,knapik2018general}.
While notable, all of these works mentioned \emph{do not help us extend the results in this paper for \cref{eqn:intro_ideas_ip_diagonal} to non-diagonal linear operator learning \cref{eqn:intro_ideas_ip_matrix} because our framework already avoids the SVD from the start}; see \cref{item:intro_ideas_diag_link} in \cref{sec:intro_ideas_diag}. Removing \cref{assump:intro_ideas_diagonal} while preserving sharp rates will likely require new ideas; see \cref{par:conclusion_beyond}.
Last, although the three papers \cite{abraham2019statistical,bohr2021stability,monard2021consistent} develop powerful new methods, these methods are specific to the particular nonlinear inverse problem studied. In contrast, the aim of this paper is to develop widely applicable theoretical insights into operator learning. We view \cite{abraham2019statistical,bohr2021stability,monard2021consistent} as being more relevant to follow-up work in the area of nonlinear inverse operator learning.

\subsection{Contributions}\label{sec:intro_contribution}
This paper provides a unified framework for the supervised learning of compact, bounded, and unbounded linear operators. The analysis is performed in the ideal situation that the eigenvectors of the true operator are known. Thus, much like the work in \cite{knapik2011bayesian} on Bayesian posterior contraction for linear inverse problems, 
our results give a theoretical roadmap for linear operator learning. Although we do not explicitly learn solution maps of inverse problems from data, our theory provides insight into the difficulty of learning operators defined by both forward and inverse problems. Our primary contributions are now listed:
\begin{enumerate}[label={(C\arabic*)}]
	\item \label{item:contr_ip} we formulate linear operator learning as a nonparametric Bayesian inverse problem with a linear operator as the unknown quantity, generalizing \cite{knapik2011bayesian} to operators;
	\item \label{item:contr_posterior} under a known eigenbasis assumption, in the large sample limit we prove convergence of the full posterior eigenvalues to the truth by deriving in-expectation and high probability upper and lower bounds for the generalization error under distribution shift;
	\item \label{item:contr_mean} we establish analogous convergence rate guarantees for the posterior mean eigenvalues with respect to learning-theoretic notions of excess risk and generalization gap;
	\item \label{item:contr_numeric} we present numerical results for learning compact, bounded, and unbounded operators arising from canonical linear PDEs in a diagonal setting, which directly support the theory, and in a non-diagonal setting, which support conjecture that our theoretical insights remain valid beyond the confines of the theory.
\end{enumerate}

A consequence of these contributions are the theoretical principles
\ref{item:intro_ideas_result_output}~to~\ref{item:intro_ideas_result_shift} (visualized in \cref{fig:intuition_lap}). Although only proved for linear operators, these may still inform state-of-the-art nonlinear operator learning techniques used in practice~\cite{bhattacharya2020model,li2020neural,lu2019deeponet,o2020derivative}. Indeed, the influence of output space smoothness on sample complexity, reflecting \ref{item:intro_ideas_result_output}, has been observed in neural operators \cite{de2022cost,kovachki2021universal,lanthaler2022error}. \Cref{item:intro_ideas_result_input} implies that training on Gaussian random field data with the commonly chosen squared exponential covariance (leading to infinitely smooth samples) is actually statistically disadvantageous. Regarding robustness of models under distribution shift, \ref{item:intro_ideas_result_input}~and~\ref{item:intro_ideas_result_shift} suggest that it may be misleading to only report prediction errors {on test data} with the same smoothness as the training data. Further exploration of these and related issues is crucial to guide the development of operator learning as an emerging field.

\subsection{Outline}\label{sec:intro_outline}
The remainder of the paper is organized as follows. Contribution \ref{item:contr_ip} (summarized in \cref{sec:intro_ideas}) is described in \cref{sec:setup}, where we give a full functional-analytic problem setup and characterize the posterior. Our main theoretical results, \cref{item:contr_posterior,item:contr_mean}, are presented and discussed in
\cref{sec:theory}. Numerical experiments \ref{item:contr_numeric} that illustrate, support, and extend beyond the theory are provided in \cref{sec:numerics}. Concluding remarks follow in \cref{sec:conclusion}. \cref{app:proofs} is devoted to proofs of the main results, with supporting lemmas in \cref{app:lemmas}. Remaining proofs of auxiliary results are located in \cref{app:extra}.

\section{Setup}\label{sec:setup}
After overviewing some notation in \cref{sec:setup_prelim}, we detail our Bayesian inverse problems approach to \cref{eqn:intro_model} in \cref{sec:setup_bayes}. \Cref{sec:setup_statlearn} gives an optimization perspective and defines expected risk and generalization gap in the infinite-dimensional setting.

\subsection{Preliminaries}\label{sec:setup_prelim}
We now detail the conventions used in this paper.
\paragraph*{Linear spaces}
Let $ (H,\ip{\cdot}{\cdot},\norm{\cdot}) $ from \cref{prob:intro_model} be a real, separable, infinite-dimensional Hilbert space. For any self-adjoint positive-definite linear operator $ A $ on $ H $, we define $A^{-1/2}$ by
functional calculus, \( \ip{\cdot}{\cdot}_{A}\defeq \ip{A^{-1/2}\cdot}{A^{-1/2}\cdot}\), and \(\norm{\cdot}_{A}\defeq \norm{A^{-1/2}\cdot\,} \).
The set $\cL(H_1;H_2)$ is the space of bounded linear operators mapping Hilbert spaces $ H_1 $ into $ H_2 $, and when $ H_1=H_2=H $, we write $ \cL(H) $. The separable Hilbert space of Hilbert--Schmidt operators from $ H_1 $ to $ H_2 $ is denoted by $ \HS(H_1;H_2)$ with inner-product $\ip{\cdot}{\cdot}_{\HS(H_1;H_2)} $. When $ H_1=H_2=H $, we write $ (\HS(H),\ip{\cdot}{\cdot}_{\HS}, \norm{\cdot}_{\HS}) $. For any $ a\in H_2$ and $ b\in H_1 $, the map $ a\otimes_{H_1} b \in \HS(H_1;H_2)$ denotes the outer product $ (a\otimes_{H_1} b)c\defeq\ip{b}{c}_{H_1}a $ for any $ c\in H_1 $. We use the shorthand $ a\otimes b\in\HS(H) $ when $ H_1=H_2=H $. For a possibly unbounded linear operator $ T $ on $ H $, we denote its domain by the subspace $ \dom(T)\subseteq H $. The identity map on \( H \) is written as \( \id\in\cL(H) \).

\paragraph*{Probability}
We primarily consider centered Borel probability measures \( \Pi \) on \( H \) with finite second moment \( \E^{h\sim\Pi}\norm{h}^2<\infty \). Such a \( \Pi \) has a covariance operator \( \Cov[\Pi]\defeq \E^{h\sim\Pi}[h\otimes h] \) in \( \cL(H) \) that is symmetric, nonnegative, and trace-class. This leads to the Karhunen--Lo\`eve (KL) expansion \( h=\sum_{j=1}^{\infty}\theta_j\xi_j\psi_j\sim\Pi \) \cite{steinwart2019convergence}. The \( \{\xi_j\} \) are zero mean, unit variance, pairwise uncorrelated real r.v.s on a complete probability space denoted by $ (\varOmega, \cF, \P) $. The $ \{\psi_j\} $ are the eigenvectors of \( \Cov[\Pi] \), extended to form an orthonormal basis of $ H $, and $ \{\theta_j^2\} $ are its nonnegative eigenvalues. If \( \Pi \) is a Gaussian measure, then the \( \{\xi_j\} \) are i.i.d. \( \normal(0,1) \) \cite{stuart2010inverse}. When appropriate, expectations are taken in the sense of Bochner integration. We use \( \E \) with no additional scripts to denote an average over all sources of randomness. We implicitly justify the exchange of expectation and infinite summation with the Fubini--Tonelli theorem.

\paragraph*{Notation}
For real \( p \) and \( q \), we write $ p\mmin q\defeq\min\{p,q\} $ and \( p \mmax q\defeq \max\{p,q\} \).
For two nonnegative real sequences $ \{a_n\} $ and $ \{b_n\} $, we write $ a_n\simeq b_n $ if $ \{a_n/b_n\} $ is bounded away from zero and infinity and $ a_n\lesssim b_n $ if there exists \( C>0 \) such that $ a_n/b_n\leq C $ for all \( n \). We use computer science asymptotic notation. This means that we write $ a_n=O(b_n) $ as $ n\to\infty $ if $ \limsup_{n\to\infty}a_n/b_n<\infty $, $ a_n=\Omega(b_n) $ as $ n\to\infty $ if $ b_n=O(a_n) $, $ a_n=\Theta(b_n) $ as $ n\to\infty $ if both $ a_n=O(b_n) $ and $ a_n=\Omega(b_n) $, and $ a_n=o(b_n) $ as $ n\to\infty $ if $ \lim_{n\to\infty}a_n/b_n=0 $. We sometimes use \( a_n\asymp b_n \) as convenient shorthand for \( a_n=\Theta(b_n) \) and \( a_n\ll b_n \) for \( a_n=o(b_n) \).

\subsection{Bayesian inference}\label{sec:setup_bayes}
In this subsection, we continue the development of operator learning as an inverse problem. We adopt the following conventions. Define \( D_N \) to be the collection of all the data, \( D_N\defeq (X,Y) \). We equip the \( N \)-fold product space $ H^{N} $ with the inner-product $ \ip{U}{V}_{H^N}=\frac{1}{N}\sum_{n=1}^{N}\ip{u_n}{v_n} $ for any $ U=(u_1,\ldots,u_N) $ and $V=(v_1,\ldots, v_N)\in H^N $. This makes $ H^N $ a Hilbert space. For any symmetric positive-definite $ \cC\in\cL(H) $, define $ H_{\cC}\defeq\im(\cC^{1/2})\subseteq H $. Equipped with 
the inner-product $ \ip{\cdot}{\cdot}_{\cC} $, the set $ H_{\cC} $ is a Hilbert space.

\subsubsection{Weighted Hilbert--Schmidt operators}\label{sec:setup_bayes_bochner}
Thus far we have not specified the space to which the self-adjoint operator \( L\colon \dom(L)\subseteq H\to H \) in \hyperref[prob:intro_model]{Main Problem} belongs. Since \( L \) may not be bounded on \( H \), the ideal Hilbert space \( \HS(H) \) is not sufficient. Instead, we consider particular Lebesgue--Bochner spaces. Let \( \nu' \) be a centered Borel probability measure on a sufficiently large space containing \( H \) with bounded covariance \( \Lambda'\defeq\Cov[\nu']\in\cL(H) \). Then \( L_{\nu'}^2(H;H) \) is defined as the set of all Borel measurable maps \( F\colon H\to H \) such that \( \norm{F}_{L^2_{\nu'}(H;H)}\defeq (\E^{x\sim\nu'}\norm{F(x)}^2)^{1/2} \) is finite. Linearity gives additional structure. For any linear $ T \colon  \dom(T)\subseteq H\to H$, the identity $ \ip{v}{Tu}=\tr{Tu\otimes v} $ for all $ u \in \dom(T) $ and $ v\in H $ yields
\begin{equation}\label{eqn:norm_to_trace}
\E^{x\sim\nu'}\norm{Tx}^2=\tr[\big]{T\Lambda'^{1/2}\bigl(T\Lambda'^{1/2}\bigr)^{*}}=\norm{T\Lambda'^{1/2}}_{\HS}^2=\norm{T}_{\HS(H_{\Lambda'};H)}^2\,.
\end{equation}

By \cref{eqn:norm_to_trace}, linear maps with finite \( L_{\nu'}^2 \) Bochner norm can be identified with weighted Hilbert--Schmidt operators. This is useful, as the next fact (proved in \cref{app:extra}) demonstrates.

\begin{fact}[weighted Hilbert--Schmidt spaces]\label{fact:setup_bayes_measure}
	Suppose there is a symmetric positive-definite linear operator $ \cK\in\cL(H) $ that satisfies \( \cK^{-1/2}\in\HS(H_{\Lambda'};H) \), where \( \Lambda'=\Cov[\nu'] \). Then \( \nu'(H_{\cK})=1 \). Additionally, if \( T\in\HS(H_{\cK};H) \), then \( \E^{x\sim\nu'}\norm{Tx}^2<\infty \).
\end{fact}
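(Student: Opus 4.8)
The plan is to prove the two assertions in turn, both via the Karhunen–Loève (KL) expansion of $\nu'$ together with the trace identity \cref{eqn:norm_to_trace}. Write $\Lambda' = \Cov[\nu'] = \sum_j \theta_j^2\, \psi_j \otimes \psi_j$ for the spectral decomposition, so that a sample $x\sim\nu'$ admits the expansion $x = \sum_j \theta_j \xi_j \psi_j$ with $\{\xi_j\}$ zero-mean, unit-variance, pairwise uncorrelated (i.i.d.\ $\normal(0,1)$ in the Gaussian case, but we will not need this). The operator $\cK$ is symmetric positive-definite on $H$, so $\cK^{1/2}$ and $\cK^{-1/2}$ are well-defined by functional calculus and $H_{\cK} = \im(\cK^{1/2})$, equipped with $\ip{\cdot}{\cdot}_{\cK}$, is a Hilbert space.

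\emph{Step 1 (the statement $\nu'(H_{\cK}) = 1$).} By the characterization $H_{\cK} = \im(\cK^{1/2}) = \{h\in H : \norm{\cK^{-1/2}h} < \infty\}$ (interpreting $\cK^{-1/2}h$ as $+\infty$ when $h\notin\dom(\cK^{-1/2})$), it suffices to show that $\norm{\cK^{-1/2}x} < \infty$ almost surely for $x\sim\nu'$, which follows if $\E^{x\sim\nu'}\norm{\cK^{-1/2}x}^2 < \infty$. Now apply \cref{eqn:norm_to_trace} with $T = \cK^{-1/2}$ and $\nu'$ in place of the generic measure: since $\cK^{-1/2}$ is self-adjoint,
\begin{equation*}
\E^{x\sim\nu'}\norm{\cK^{-1/2}x}^2 = \norm{\cK^{-1/2}}_{\HS(H_{\Lambda'};H)}^2,
\end{equation*}
which is finite precisely by the hypothesis $\cK^{-1/2}\in\HS(H_{\Lambda'};H)$. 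Hence $\cK^{-1/2}x \in H$ for $\P$-a.e.\ $x$, equivalently $x\in\im(\cK^{1/2}) = H_{\cK}$ a.s., so $\nu'(H_{\cK}) = 1$. One mild technical point to address here is measurability of the (a priori only densely defined) map $x\mapsto\norm{\cK^{-1/2}x}^2$ and the legitimacy of applying \cref{eqn:norm_to_trace} to the unbounded operator $\cK^{-1/2}$; this is handled by a monotone truncation: apply \cref{eqn:norm_to_trace} to the bounded operators $\cK^{-1/2}P_m$ where $P_m$ is the spectral projection of $\cK$ onto $\{\lambda \ge 1/m\}$, obtain a uniform-in-$m$ bound $\norm{\cK^{-1/2}P_m}_{\HS(H_{\Lambda'};H)}^2 \le \norm{\cK^{-1/2}}_{\HS(H_{\Lambda'};H)}^2$, and pass to the limit by monotone convergence.

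\emph{Step 2 (the statement $T\in\HS(H_{\cK};H)\Rightarrow\E^{x\sim\nu'}\norm{Tx}^2 < \infty$).} Since $\nu'$ is supported on $H_{\cK}$ by Step 1, the quantity $Tx$ is defined $\nu'$-a.s.\ for $T$ defined on $H_{\cK}$. The key algebraic observation is the factorization $T = (T\cK^{1/2})\cK^{-1/2}$: the operator $T\cK^{1/2}\in\HS(H;H)$ because $T\in\HS(H_{\cK};H)$ means exactly $\norm{T}_{\HS(H_{\cK};H)} = \norm{T\cK^{1/2}}_{\HS(H)} < \infty$ (by the same change-of-weight identity used in \cref{eqn:norm_to_trace}, namely $\norm{T}_{\HS(H_{\cK};H)} = \norm{T\cK^{1/2}}_{\HS}$). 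Therefore $T\cK^{1/2}$ is bounded on $H$, and using \cref{eqn:norm_to_trace} once more,
\begin{equation*}
\E^{x\sim\nu'}\norm{Tx}^2 = \E^{x\sim\nu'}\norm{(T\cK^{1/2})\cK^{-1/2}x}^2 \le \norm{T\cK^{1/2}}_{\cL(H)}^2\, \E^{x\sim\nu'}\norm{\cK^{-1/2}x}^2 = \norm{T\cK^{1/2}}_{\cL(H)}^2\, \norm{\cK^{-1/2}}_{\HS(H_{\Lambda'};H)}^2 < \infty,
\end{equation*}
where we used $\norm{T\cK^{1/2}}_{\cL(H)} \le \norm{T\cK^{1/2}}_{\HS(H)} = \norm{T}_{\HS(H_{\cK};H)} < \infty$ and the finiteness of $\E^{x\sim\nu'}\norm{\cK^{-1/2}x}^2$ from Step 1.

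The bulk of the argument is thus just two applications of the trace identity \cref{eqn:norm_to_trace} glued together by the factorization $T = (T\cK^{1/2})\cK^{-1/2}$. I expect the only genuine obstacle to be the care needed in Step 1 around the unboundedness of $\cK^{-1/2}$ and the precise sense in which $\im(\cK^{1/2})$ is a full-measure set — i.e.\ justifying that $\E\norm{\cK^{-1/2}x}^2 < \infty$ really does force $x\in\im(\cK^{1/2})$ almost surely, which relies on the standard fact that for a self-adjoint positive operator $A$, $h\in\im(A^{1/2})$ iff $\sum_j \lambda_j^{-1}\abs{\ip{h}{\psi_j}}^2 < \infty$ (with $\{\lambda_j\},\{\psi_j\}$ the eigensystem of $A$), combined with Fubini–Tonelli to exchange the expectation and the sum over $j$. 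Everything else is routine manipulation of Hilbert–Schmidt norms under a change of weighting operator.
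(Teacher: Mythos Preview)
Your proposal is correct and follows essentially the same approach as the paper. Step~1 is identical (apply \cref{eqn:norm_to_trace} with $T=\cK^{-1/2}$), and in Step~2 both arguments rely on the factorization $T=(T\cK^{1/2})\cK^{-1/2}$; the paper bounds $\norm{T\Lambda'^{1/2}}_{\HS}^2$ via Cauchy--Schwarz on the double sum to get $\norm{(T\cK^{1/2})^*}_{\HS}^2\norm{\cK^{-1/2}\Lambda'^{1/2}}_{\HS}^2$, whereas you bound $\norm{Tx}^2$ pointwise by $\norm{T\cK^{1/2}}_{\cL(H)}^2\norm{\cK^{-1/2}x}^2$ before taking expectation---a cosmetic difference yielding the same conclusion.
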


For \( \cK \) satisfying the hypotheses of \cref{fact:setup_bayes_measure}, the fact  suggests that \( \HS(H_{\cK};H) \) is a natural Hilbert space for \( L \) to belong to. Defining $ \dom(L)\defeq \{h\in H\colon  L h\in H\}$ (the usual domain for many self-adjoint operators), \cref{fact:setup_bayes_measure} also implies that \( \nu'(\dom(L))=1 \). Identifying such a valid \( \cK \) requires some \emph{a priori} knowledge about the unknown \( L \). For example, later in \hyperref[par:item:as_smooth_truth]{subsection 3.1} we show how to choose a \( \cK \) ``smoothing enough'' so that \( L\in\HS(H_{\cK};H) \). For now, to make sense of the remainder of \cref{sec:setup} we \emph{assume} that the following condition holds.

\begin{condition}[existence of \( \cK \)]\label{cond:existence_of_K}
	There exists a symmetric positive-definite linear operator $ \cK\in\cL(H) $ such that \( \{\cK^{-1/2}\Lambda^{1/2}, \cK^{-1/2}\Lambda'^{1/2}\}\subset\HS(H) \) and \( \Ld\in\HS(H_{\cK};H) \).
\end{condition}

Our use of weighted Hilbert--Schmidt spaces is closely related to the notion of $ \Pi $ measurable linear operators for a probability measure \( \Pi \), which is a common way to work with unbounded operators; see \cite{gawarecki2010stochastic,mandelbaum1984linear} and \cite[sect. 3--4]{knapik2011bayesian}. If $ \cK $ is compact, the weighted norm is weak in the sense that $ \HS(H_{\cK};H)\supset\cL(H)\supset\HS(H) $ \cite[sect. 2.2]{gawarecki2010stochastic}. However, if \( L \) is already Hilbert--Schmidt on \( H\), then the choice \( \cK=\id\in\cL(H) \) in \cref{cond:existence_of_K} is valid (if \( \Lambda' \) is trace-class).

\subsubsection{Data model}\label{sec:setup_bayes_model}
Recall the statistical model \( Y=K_X L +\gamma\Xi \) \cref{eqn:intro_ideas_ip_operator} from \cref{sec:intro_ideas_ol_as_ip}. We now give further details about each component in this data model.

\paragraph*{Forward map}
The input data \( X\sim \nu^{\otimes N} \) in \( H^N \) defines the linear forward map \( K_X \). We enforce that the Borel probability measure \( \nu \) has finite second moment. Hence, its covariance $ \Lambda\in\cL(H) $ is symmetric, nonnegative, and trace-class on $ H $. We take \( \Lambda \) to be strictly positive-definite for simplicity. For \( \cK \) as in \cref{cond:existence_of_K} and for any $ Z\in H_{\cK}^N $ (the $N$-fold product of $H_{\cK}$), we
define the forward map $ K_Z\in\cL(\HS(H_{\cK};H);H^{N}) $ by $ T\mapsto K_Z T\defeq (Tz_1,\ldots, Tz_N)$. \Cref{fact:forward_facts}, proved in \cref{app:extra}, addresses the compactness of this map.
\begin{fact}[non-compact]\label{fact:forward_facts}
	If $ Z\in H_{\cK}^{N}\setminus\{0\} $, then $ K_Z\in\cL(\HS(H_{\cK};H);H^{N}) $ is not compact.
\end{fact}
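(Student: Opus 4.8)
The plan is to exhibit an infinite-dimensional closed subspace of $\HS(H_{\cK};H)$ on which $K_Z$ is bounded below; since a compact operator cannot be bounded below on an infinite-dimensional subspace, this forces $K_Z$ to be non-compact. Because $Z\neq 0$, I would first fix an index $n_0$ with $z_{n_0}\neq 0$, and then normalize in the weighted space by setting $\hat z\defeq z_{n_0}/\norm{z_{n_0}}_{\cK}\in H_{\cK}$, so that $\norm{\hat z}_{\cK}=1$. The key device is the linear map $\iota\colon H\to\HS(H_{\cK};H)$ defined by $\iota w\defeq w\otimes_{H_{\cK}}\hat z$. Using the elementary identity $\norm{a\otimes_{H_{\cK}}b}_{\HS(H_{\cK};H)}=\norm{a}\,\norm{b}_{\cK}$ together with $\norm{\hat z}_{\cK}=1$, one sees that $\iota$ is a linear isometry; in particular its range is an infinite-dimensional closed subspace of $\HS(H_{\cK};H)$.

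Next I would compute the action of $K_Z$ along this subspace. For $w\in H$ and each $n$ we have $(\iota w)z_n=\ip{\hat z}{z_n}_{\cK}\,w$, and in particular $(\iota w)z_{n_0}=\ip{\hat z}{z_{n_0}}_{\cK}\,w=\norm{z_{n_0}}_{\cK}\,w$. Therefore, keeping only the $n_0$ term in the $H^N$ norm,
\[
\norm{K_Z(\iota w)}_{H^N}^2=\frac1N\sum_{n=1}^{N}\abs{\ip{\hat z}{z_n}_{\cK}}^2\norm{w}^2\ \geq\ \frac{\norm{z_{n_0}}_{\cK}^2}{N}\,\norm{w}^2\,,
\]
so $K_Z\circ\iota\colon H\to H^N$ is bounded below by a strictly positive constant.

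To finish, I would invoke infinite-dimensionality of $H$ to pick an orthonormal sequence $\{w_m\}\subset H$. Then $\{\iota w_m\}$ lies on the unit sphere of $\HS(H_{\cK};H)$, hence is bounded, while linearity and the lower bound give $\norm{K_Z(\iota w_m)-K_Z(\iota w_{m'})}_{H^N}=\norm{K_Z(\iota(w_m-w_{m'}))}_{H^N}\geq\norm{z_{n_0}}_{\cK}\sqrt{2/N}$ for all $m\neq m'$. Thus $\{K_Z(\iota w_m)\}$ has no Cauchy, hence no convergent, subsequence, and $K_Z$ is not compact. I do not anticipate a real obstacle here; the work is bookkeeping with the weighted inner product $\ip{\cdot}{\cdot}_{\cK}$ and the outer-product/Hilbert--Schmidt norm identities. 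The only point that needs a (one-line) justification is that the lower bound is strictly positive, i.e.\ that $z_{n_0}\neq 0$ in $H$ forces $\norm{z_{n_0}}_{\cK}=\norm{\cK^{-1/2}z_{n_0}}>0$, which holds since $z_{n_0}\in H_{\cK}$ and $\cK^{-1/2}$ is injective on $H_{\cK}$.
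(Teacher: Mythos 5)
Your proof is correct, and it takes a genuinely different route from the paper's. The paper computes the normal operator $K_Z^{*}K_Z^{\vphantom{*}}$ on $\HS(H_{\cK};H)$, identifies it as the right-multiplication map $T\mapsto T\cC_{\cK}^{(N)}$ with $\cC_{\cK}^{(N)}=\frac{1}{N}\sum_n z_n\otimes_{H_{\cK}}z_n$ the empirical covariance of $Z$, recognizes this as the tensor product operator $\id_H\otimes\,\cC_{\cK}^{(N)}$ on $H\otimes H_{\cK}$, and then appeals to a cited result that a tensor product with the non-compact factor $\id_H$ cannot be compact. Your argument instead works directly with the definition of compactness: you embed $H$ isometrically into $\HS(H_{\cK};H)$ via $w\mapsto w\otimes_{H_{\cK}}\hat z$ (with $\hat z$ the normalized nonzero column $z_{n_0}$), compute that $K_Z$ restricted to this copy of $H$ is bounded below, and conclude by mapping an orthonormal sequence of $H$ through and observing the images are uniformly separated. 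Both proofs hinge on the same underlying structural fact---that $K_Z$ acts essentially by right-composition, so it is "$\id_H$ in the output factor" and a nonzero operator in the input factor---but your version is more elementary and self-contained, avoiding the external tensor-product compactness lemma. The paper's formulation has the side benefit of exhibiting $K_Z^{*}K_Z^{\vphantom{*}}$ explicitly, which the authors later use to connect the SVD of the forward map to functional PCA of the design; your version makes the non-compactness transparent from first principles at the cost of not surfacing that structure.
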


\paragraph*{Noise}
Define \( \pi\defeq \normal(0,\id) \). Since $\id\in\cL(H)$ is not trace-class on $H$, the white noise $\xi\sim\pi$ is not a proper random element in \( H \). It is instead defined as the \( H \)-indexed centered Gaussian process \( \xi\defeq \{\xi_{h}\colon h\in H\} \) with covariance \((h,h')\mapsto \E[\xi_{h}\xi_{h'}]=\ip{h}{h'} \) \cite[sect. 2]{knapik2011bayesian}. For \( \gamma>0 \), the noise is then \( \gamma\Xi \), where \( \Xi\sim\pi^{\otimes N} \) is assumed independent of \( X \) and \( L \). Finally, we interpret \( Y \) in \cref{eqn:intro_ideas_ip_operator} as \( N \) independent stochastic processes \( Y_n\defeq\{\ip{y_n}{h}\colon h\in H\} \) for \( n\in\{1,\ldots, N\} \), such that for \( h\in H \), it holds that \( \ip{y_n}{h}\condbar X, L\sim \normal(\ip{Lx_n}{h}, \gamma^2\norm{h}^2) \). Observing \( Y \) entrywise on the indices \( \{\varphi_j\} \) leads to \cref{eqn:intro_ideas_ip_matrix}. The case of general \( \Cov[\pi]=\Gamma \in\cL(H)\) may be handled by pre-whitening the data \cref{eqn:intro_ideas_ip_operator} \cite[sect. 1]{agapiou2018posterior}; see also the related \cref{cor:theory_expectation_color}.

\paragraph*{Prior}
We assume that $L\sim\mu$ is \emph{a priori} Gaussian, where \( \mu\defeq\normal(0,\Sigma)  \) is conjugate to the likelihood, and independent of \( X \) and \( \Xi \). Since we view the r.v. $ L\colon \dom(L)\subseteq H\to H $ as a densely defined operator, the sense in which $ \mu $ is a proper Gaussian measure requires some care. Specifically, we take $ \Sigma\in\cL(\HS(H_{\cK};H)) $ to be symmetric, positive-definite, and trace-class on $ \HS(H_{\cK};H)\supseteq\HS(H) $, but not necessarily trace-class on $ \HS(H) $. Here $\cK$ ensures that the support of \( \mu \) is large enough to encompass unbounded operators on \( H \).

\paragraph*{Posterior}
Recall that the realized data \( Y \) is given by \cref{eqn:intro_ideas_ip_operator} with \( L=\Ld \) under \cref{assump:intro_ideas_truth}. Since \( \Xi \), $X$, and $L$ are \emph{a priori} independent, the 
posterior for $L$ given
$ Y $ and $ X $, denoted by $ \post $, is the same as that obtained
when $L$ is conditioned on $Y$ with $ X $ fixed, a.s.; see \cite[Thms. 32, 13, and 37]{dashti2017} for more justification. The Bayesian inverse problem \cref{eqn:intro_ideas_ip_operator} is linear and Gaussian. Thus, the posterior is also a Gaussian on \( \HS(H_{\cK};H) \) and is denoted by
\begin{equation}\label{eqn:setup_bayes_posterior_notation}
\post = \normal(\bar{L}^{(N)}, \Sigma^{(N)})\,.
\end{equation}
The posterior mean is $ \bar{L}^{(N)}= \E^{L\sim\post}L\in \HS(H_{\cK};H)$. The posterior covariance operator is $ \Sigma^{(N)}\in\cL(\HS(H_{\cK};H)) $. Explicit formulas for both are known even in this infinite-dimensional setting \cite{knapik2011bayesian,lehtinen1989linear,mandelbaum1984linear}. We link \cref{eqn:setup_bayes_posterior_notation} to our diagonal formulation in the next three subsections. 

\subsubsection{Diagonalization}\label{sec:setup_bayes_diag}
Recall the scalar sequence space model \( y_{jn}=\ip{\varphi_j}{x_n}l_j + \gamma\xi_{jn} \) for \( j\in\N \) and \( n\in\setoneton \) \cref{eqn:intro_ideas_ip_diagonal}.\footnote{In the absence of noise \( \{\gamma \xi_{jn}\} \), determination
of $\{l_j=\ld_j\}$ is trivial: the diagonalizable structure arising from \cref{assump:intro_ideas_diagonal} means that $\{\ld_j\}$ 
may be recovered from a \emph{single} input-output pair, say $ (x_1, \Ld x_1) $.
However, our non-diagonal simulation studies in \cref{sec:numerics_beyond} will 
demonstrate the relevance of our 
theory beyond \cref{assump:intro_ideas_diagonal}. In this setting, determination
of $\{\ld_j\}$ is no longer trivial in the noise-free case.}
This model arises from the matrix sequence problem \cref{eqn:intro_ideas_ip_matrix} under \cref{assump:intro_ideas_diagonal} by noting that \( \Lmat_{jk}=\ip{\varphi_j}{L\phi_k}=l_j\ip{\varphi_j}{\phi_k} \) because \( L \) is self-adjoint. The \( \{\phi_k\} \) are the orthonormal eigenvectors of \( \Lambda=\Cov[\nu] \). For each \( n \), the \( \{x_{kn}=\ip{\phi_k}{x_n}\}_{k\in\N} \) are pairwise uncorrelated r.v.s by KL expansion. If \( L \) and \( \Lambda \) commute, then \( \{\phi_k=\varphi_k\} \) can be taken as the eigenbasis for \( L \). For each \( n \), the scalar model's coefficients \( \{\ip{\varphi_j}{x_n}\} \) are pairwise uncorrelated in this case. However, in general \( L \) and \( \Lambda \) do not commute, so the coefficients are correlated. For \( n\in\setoneton \), it is useful to write these as
\begin{equation}\label{eqn:setup_bayes_gjn}
g_{jn}\defeq \ip{\varphi_j}{x_n}=\sum\nolimits_{k=1}^{\infty}\ip{\varphi_j}{\phi_k}x_{kn}\qa \vartheta_j^2\defeq \Var[g_{j1}]=\ip{\varphi_j}{\Lambda\varphi_j} \qf j\in\N\, .
\end{equation}
Our proofs use some independence-agnostic methods to deal with the dependent, correlated family $ \{g_{jn}\}_{j\in\N} $. Nonetheless, $ \{g_{jn}\}_{n=1}^{N} $ is still i.i.d. for fixed $ j $ and $\E[g_{jn}g_{jn'}]=0$ for $ n\neq n' $.

\subsubsection{Posterior characterization}\label{sec:setup_bayes_posterior}
For two sequences $ \{a_{jn}\}$ and $ \{b_{jn}\} $, we henceforth use the averaging notation $ \avgn{a_j}{b_j}\defeq\frac{1}{N}\sum_{n=1}^{N}a_{jn}b_{jn} $. For \cref{eqn:intro_ideas_ip_diagonal}, we assume a prior $\{l_j\} \sim \priorseq\defeq \bigotimes_{j=1}^{\infty}\normal(0,\sigma_j^2)$.
We will identify $ L\sim\mu $ with $ l\defeq\{l_j\}\sim\priorseq$ in \cref{sec:setup_bayes_testerror}. Under this product prior, \cref{eqn:intro_ideas_ip_diagonal} decouples (i.e., \( \{l_j\}\condbar D_N = \{l_j\condbar D_N\} \)) into an infinite number of random scalar Bayesian inverse problems that are equivalent to the full infinite-dimensional problem \cref{eqn:intro_ideas_ip_operator}. By completing the square \cite[Ex. 6.23]{stuart2010inverse}, we 
obtain the following Gaussian posterior.
\begin{fact}[posterior]\label{fact:posterior_sequence}
	The law of \( \{l_j\}\condbar D_N \) is $\postseq=\bigotimes_{j=1}^{\infty}\normal(\bar{l}_j^{(N)},(\sigma_j^{(N)})^2)$, where
	\begin{equation}\label{eqn:posterior_sequence}
	\bar{l}_j^{(N)}=\dfrac{N\gamma^{-2}\sigma_j^{2}\avgn{y_j}{g_j}}{1+N\gamma^{-2}\sigma_j^2\avgn{g_j}{g_j}}
	\qa
	\bigl(\sigma_j^{(N)}\bigr)^2=\dfrac{\sigma_j^2}{1+N\gamma^{-2}\sigma_j^2\avgn{g_j}{g_j}} \qf j\in\N\,.
	\end{equation}
\end{fact}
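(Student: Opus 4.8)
The plan is to reduce the infinite-dimensional conjugate Gaussian update to a countable family of one-dimensional ones and then invoke the standard scalar conjugacy computation (``complete the square''). First I would condition on the design $X$, equivalently on the family $\{g_{jn}\}$ of \cref{eqn:setup_bayes_gjn}; this is legitimate since $L$ is \emph{a priori} independent of $X$, so that conditioning on $D_N=(X,Y)$ agrees a.s.\ with conditioning on $Y$ with $X$ held fixed, exactly as recorded in \cref{sec:setup_bayes_model}. Working with the scalar array \cref{eqn:intro_ideas_ip_diagonal} (which the discussion in \cref{sec:setup_bayes_model} already derives by observing the processes $Y_n$ entrywise on the orthonormal system $\{\varphi_j\}$, with $\{\xi_{jn}\}\diid\normal(0,1)$ because white noise evaluated on an orthonormal family yields i.i.d.\ standard Gaussians), the key structural observation is a factorization: under the product prior $\priorseq=\bigotimes_{j=1}^{\infty}\normal(0,\sigma_j^2)$ the law of $\{l_j\}$ splits over $j$, and for fixed $X$ the conditional density of $(y_{j1},\dots,y_{jN})$ depends on $\{l_i\}$ only through $l_j$. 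Hence the joint law of $(\{l_j\},D_N)$ factorizes over $j$ and so does the posterior, i.e.\ $\{l_j\}\condbar D_N=\{l_j\condbar D_N\}$, with the $j$-th factor a function of $(y_{j1},\dots,y_{jN})$ and $(g_{j1},\dots,g_{jN})$ alone.

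Second, for each fixed $j$ I would run the one-dimensional conjugate update. Given $l_j$ and $X$, the observations $y_{jn}\sim\normal(g_{jn}l_j,\gamma^2)$ are independent over $n\in\setoneton$, so the likelihood is proportional to $\exp\bigl(-\frac{1}{2\gamma^2}\sum_{n}(y_{jn}-g_{jn}l_j)^2\bigr)$; multiplying by the $\normal(0,\sigma_j^2)$ prior density and completing the square in $l_j$ (cf.\ \cite[Ex.~6.23]{stuart2010inverse}) produces a Gaussian whose precision is the sum of prior and data precisions, $\sigma_j^{-2}+\gamma^{-2}\sum_{n}g_{jn}^2=\sigma_j^{-2}+N\gamma^{-2}\avgn{g_j}{g_j}$, and whose mean is that posterior variance times $\gamma^{-2}\sum_{n}g_{jn}y_{jn}=N\gamma^{-2}\avgn{y_j}{g_j}$. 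Simplifying these two expressions gives precisely $(\sigma_j^{(N)})^2$ and $\bar l_j^{(N)}$ as in \cref{eqn:posterior_sequence}; note the degenerate case $\avgn{g_j}{g_j}=0$ is covered automatically, the formula collapsing to the prior in that coordinate.

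Finally, I would reassemble the coordinates: the posterior is the product measure $\postseq=\bigotimes_{j=1}^{\infty}\normal(\bar l_j^{(N)},(\sigma_j^{(N)})^2)$. Since $0\le(\sigma_j^{(N)})^2\le\sigma_j^2$ for every $j$, the posterior covariance is dominated by the prior covariance, so $\postseq$ is a well-defined Gaussian measure on the same sequence space carrying $\priorseq$, consistent with the abstract characterization \cref{eqn:setup_bayes_posterior_notation} once $L\sim\mu$ is identified with $l\sim\priorseq$ (the identification made in \cref{sec:setup_bayes_testerror}). The only point requiring genuine care is the measure-theoretic legitimacy of decomposing an infinite-dimensional Gaussian conditioning into countably many scalar conditionings, i.e.\ verifying that the product of the per-coordinate conditional laws really is a regular conditional distribution for $\{l_j\}$ given $D_N$; I expect this to be the main (though routine) obstacle, and it is dispatched by the independence of the prior factors, the coordinatewise measurability of $D_N\mapsto(\bar l_j^{(N)},(\sigma_j^{(N)})^2)_{j\in\N}$, and the disintegration results (\cite[Thms.~32, 13, and 37]{dashti2017}) already invoked in \cref{sec:setup_bayes_model}.
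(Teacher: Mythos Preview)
Your proposal is correct and follows essentially the same approach as the paper: the paper simply notes that under the product prior the sequence model \cref{eqn:intro_ideas_ip_diagonal} decouples over $j$ and then appeals to completing the square (citing \cite[Ex.~6.23]{stuart2010inverse}) to obtain \cref{eqn:posterior_sequence}. Your write-up just makes explicit the factorization, the scalar conjugate update, and the measure-theoretic bookkeeping that the paper leaves implicit.
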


\subsubsection{Bayesian test error}\label{sec:setup_bayes_testerror}
The true $ \Ld $ is naturally approximated by the \emph{posterior mean estimator} $ \bar{l}^{(N)}\defeq\{\bar{l}^{(N)}_j\} $ and the \emph{posterior sample estimator} $ {l}^{(N)}\defeq\{{l}^{(N)}_j\}\sim\postseq $. Defining the linear bijection $ B\colon  \{l_j\}\mapsto \sum_{j=1}^{\infty}l_j\varphi_j\otimes\varphi_j $, it follows that the actual posterior $ \post $ \cref{eqn:setup_bayes_posterior_notation} on $ L $ is the pushforward of $ \postseq $ under $ B $, that is, $ L^{(N)}\sim\post=B_{\sharp}\postseq=\normal(\bar{L}^{(N)},\Sigma^{(N)}) $.

Recall the measure \( \nu' \) from \cref{sec:setup_bayes_bochner} that has bounded covariance \( \Lambda'\in\cL(H) \) (e.g., \( \Lambda'=\id \) is allowed). Assume \( \Lambda' \) has an orthonormal eigenbasis \( \{\phi_k'\} \) of \( H \). We now view \( \nu' \) as an arbitrary \emph{test data distribution} that we are interested in predictions on. A useful representation of the weighted norm \cref{eqn:norm_to_trace} is \(T\mapsto \E^{x\sim\nu'}\norm{Tx}^2=\sum_{j,k}\lambda_k(\Lambda')\ip{\varphi_j}{T\phi_k'}^2 \), where \( \{\lambda_k(\Lambda')\} \) denotes the eigenvalues of \( \Lambda' \). In our setting, \( L \) is diagonal in \( \{\varphi_j\} \) which leads to
\begin{equation}\label{eqn:bochner_coord}
\norm{L}^2_{L^2_{\nu'}(H;H)}=\sum\nolimits_{i=1}^{\infty}\vartheta_i'^2l_i^2\,,
\qw
\vartheta_j'^2\defeq \sum\nolimits_{k=1}^{\infty}\lambda_k(\Lambda')\ip{\varphi_j}{\phi_k'}^2=\ip{\varphi_j}{\Lambda'\varphi_j}
\end{equation}
for \( j\in\N \). We can now define a notion of test error (i.e., prediction or ``generalization'' error).
\begin{definition}[test error: posterior]\label{def:test_error}
	The \emph{test error of the posterior sample estimator} is
	\begin{equation}\label{eqn:error_bochner_intro}
	\E^{D_N}\E^{L^{(N)}\sim\post}\norm[\big]{\Ld - L^{(N)}}_{L^2_{\nu'}(H;H)}^2 = \E^{D_N}\E^{l^{(N)}\sim\postseq}\sum\nolimits_{j=1}^{\infty}\vartheta_j'^2 \abs[\big]{\ld_j - l_j^{(N)}}^2 \, .
	\end{equation}
\end{definition}
The outer expectation is with respect to the
data, and the inner expectation is with respect to the Bayesian
posterior. The definition of test error for the posterior mean is similar.
\begin{definition}[test error: mean]\label{def:test_error2}
	The \emph{test error of the posterior mean estimator} is
	\begin{equation}\label{eqn:error_bochner_intro2}
	\E^{D_N}\norm[\big]{\Ld - \bar{L}^{(N)}}_{L^2_{\nu'}(H;H)}^2 = \E^{D_N}\sum\nolimits_{j=1}^{\infty}\vartheta_j'^2 \abs[\big]{\ld_j - \bar{l}_j^{(N)}}^2\, .
	\end{equation}
\end{definition}
We say that \cref{eqn:error_bochner_intro} 
or \cref{eqn:error_bochner_intro2} tests \emph{in-distribution} 
if $ \nu'=\nu $ and \emph{out-of-distribution} or \emph{under distribution shift} otherwise. If \( \Lambda'=\id \), then the \( L^2_{\nu'} \) Bochner norm equals the familiar un-weighted \( \HS(H) \) norm.
In \cref{sec:theory}, we study the \( N\to\infty \) asymptotics of \cref{eqn:error_bochner_intro,eqn:error_bochner_intro2}.

\subsection{Statistical learning}\label{sec:setup_statlearn}
We briefly adopt a statistical learning theory perspective to complement the Bayesian approach of \cref{sec:setup_bayes}. Let $ \cP $ denote the joint distribution on \( (x,y) \) implied by \( y=\Ld x+\gamma\xi \), where \( x\sim\nu \) and \( \xi\sim\pi =\normal(0,\id)\) independently. The data in \cref{eqn:intro_model} is then $ (x_n, y_n)\sim \cP$ i.i.d., $ n\in\setoneton $. Since regression is our focus, it is natural to work with the square loss function on $ H $. Then $ \E^{(x,y)\sim\cP}\frac{1}{2}\norm{y-Lx}^2 $ and $\frac{1}{N}\sum_{n=1}^{N}\frac{1}{2}\norm{y_n-Lx_n}^2 $ define the expected risk and empirical risk for \( L \), respectively. However, these expressions are not well-defined because infinite-dimensional \( H \) implies $ \norm{y}=\norm{\xi}=\infty $ a.s. \cite[Rem. 3.8]{stuart2010inverse}. Inspired by the negative log likelihood of $ \post $ as in \cite{alberti2021learning,nickl2020convergence}, we re-define the risks as follows.
\begin{definition}[expected risk]
	Given \( L \), the \emph{expected risk} \emph{(}or \emph{population risk}\emph{)} is 
	\begin{equation}\label{eqn:risk_expect}
	\cR_{\infty}(L)\defeq\E^{(x,y)\sim\cP}\bigl[\tfrac{1}{2}\norm{Lx}^2-\ip{y}{Lx}\bigr]\,.
	\end{equation}
\end{definition}
\begin{definition}[empirical risk]
	Given \( L \), the \emph{empirical risk} is
	\begin{equation}\label{eqn:risk_emp}
	\cR_{N}(L)\defeq\frac{1}{N}\sum\nolimits_{n=1}^{N}\Bigl[\tfrac{1}{2}\norm{Lx_n}^2-\ip{y_n}{Lx_n}\Bigr]= \tfrac{1}{2}\norm{K_XL}_{H^N}^2-\ip{Y}{K_XL}_{H^N}\,,
	\end{equation}
	and the \emph{regularized empirical risk} is
	\begin{equation}\label{eqn:risk_emp-reg}
	\cR_{N,W}(L)\defeq \cR_{N}(L) + \tfrac{1}{2N}\norm{W^{-1/2}L}_{\HS(H_{\cK};H)}^2 \, ,
	\end{equation}
	where $ W\in\cL(\HS(H_{\cK};H)) $ is symmetric positive-definite and $ \cK $ is as in \cref{cond:existence_of_K}.
\end{definition}
\Cref{eqn:risk_expect,eqn:risk_emp} \emph{are well-defined} because the ``infinite constants'' $ \frac{1}{2}\norm{y}^2 $ and \( \frac{1}{2}\norm{y_n}^2 \) from the original risk expressions are subtracted away and the linear cross terms \( \ip{y}{Lx} \) and \( \ip{y_n}{Lx_n} \), viewed as actions under stochastic processes (see \cref{sec:setup_bayes_model}), are finite a.s.\,.

The role of risk is to quantify the accuracy of a hypothesis \( L \). By the independence of \( x \) and \( \xi \) plus the stochastic process definition of \( \pi \) in \cref{sec:setup_bayes_model}, \( \E^{(x,y)\sim\cP}\ip{y}{Lx}=\E^{x\sim\nu}\ip{\Ld x}{Lx} \) so that \(\cR_{\infty}(L)=\tfrac{1}{2}\E^{x\sim\nu}\norm{\Ld x - Lx}^2-\tfrac{1}{2}\E^{x\sim\nu}\norm{\Ld x}^2\). Thus, the infimum of \( \cR_{\infty} \) is achieved at the \emph{regression function} \cite{caponnetto2007optimal} $\E[y\condbar x=\cdot]=\Ld\in \HS(H_{\cK};H)$. Minimizers of the empirical risk over the RKHS \emph{hypothesis class} \( \sL=\im(W^{1/2}) \) are point estimates of the true \( \Ld \) (but we do not require \( \Ld\in\sL \)). Our focus is the minimizer $ \hat{L}^{(N,W)} $ of the convex functional \cref{eqn:risk_emp-reg} over \( \sL \). It may be identified as the posterior mean $ \bar{L}^{(N)} $ from \cref{eqn:setup_bayes_posterior_notation} whenever $ W $ equals the prior covariance \( \Sigma \) \cite{dashti2013map}. We enforce this and write $ \hat{L}^{(N, W)}\equiv\bar{L}^{(N)} $. To quantify the performance of $ \bar{L}^{(N)} $, we employ the following notions of error from statistical learning.
\begin{definition}[excess risk]\label{def:excess_risk}
	The \emph{excess risk} of the posterior mean is defined by
	\begin{equation}\label{eqn:excess_risk}
	\cE_N\defeq 2\cR_{\infty}(\bar{L}^{(N)}) - 2\cR_{\infty}(\Ld)=\E^{x\sim\nu}\norm{\Ld x-\bar{L}^{(N)}x}^2\,.
	\end{equation}
\end{definition}

The excess risk is always nonnegative and provides a notion of consistency for $ \bar{L}^{(N)} $. In \cref{sec:theory_excess_risk}, we control \cref{eqn:excess_risk} either in expectation, \( \E^{D_N}\cE_N \), or with high probability over the input training samples, \( \E^{Y\condbar X}\cE_N \). The last expectation is over the noise only, under \cref{eqn:intro_ideas_ip_operator}.

Next, we define the generalization gap. It can take any sign and, as the difference between test and training errors, controls the amount of ``overfitting'' that $\bar{L}^{(N)}$ can exhibit.

\begin{definition}[generalization gap]\label{def:gen_gap}
	The \emph{generalization gap} of the posterior mean is
	\begin{equation}\label{eqn:gen_gap}
	\cG_N\defeq\cR_{\infty}(\bar{L}^{(N)})- \cR_{N}(\bar{L}^{(N)})\,.
	\end{equation}
\end{definition}

\Cref{eqn:gen_gap} may be written in terms of $ \Ld $ instead of $ y $ (see \cref{eqn:j1j2j3} in the proof of \cref{thm:gen_gap_expect}). In \cref{sec:theory_gen_gap}, we bound the \emph{expected generalization gap} \( \EGG \).

\section{Convergence rates}\label{sec:theory}
We are now ready to study the sample complexity of the posterior estimator \cref{eqn:posterior_sequence} with respect to the notions of error defined in \cref{sec:setup_bayes_testerror,sec:setup_statlearn}. In \cref{sec:theory_assumptions}, we list and interpret our main assumptions. In \cref{sec:theory_expectation}, under fourth moment conditions we establish asymptotic convergence rates of both the posterior sample and mean estimators and related lower bounds. Posterior contraction is discussed in \cref{sec:theory_contraction}. 
Analogous high probability results are developed in \cref{sec:theory_probability} for subgaussian design. Last, both upper and lower bounds are established in expectation for the excess risk and generalization gap in \cref{sec:theory_excess_risk,sec:theory_gen_gap}. We collect all of the proofs in \cref{app:proofs}.

\subsection{Main assumptions}\label{sec:theory_assumptions}
In the setting of the sequence model \cref{eqn:intro_ideas_ip_diagonal}, our convergence theory for diagonal linear operator learning is primarily developed under five assumptions.
\begin{assumption}[eigenvalue learning assumptions]\label{assump:theory_assumptions_main}
	The following conditions hold true.
	\begin{enumerate}[label={(A\arabic*)}]
		\item \label{item:as_true_diag} \emph{(\sfit{diagonal true operator})} \Cref{assump:intro_ideas_diagonal,assump:intro_ideas_truth} hold, so that \( \Ld=\sum_{j=1}^{\infty}\ld_j\varphi_j\otimes\varphi_j \).
		
		\item \label{item:as_smooth_truth} \emph{(\sfit{smoothness of true operator})} The true eigenvalues satisfy \( \ld\defeq \{\ld_j\}\in\cH^{s} \) for some \( s\in\R \).
		
		\item \label{item:as_smooth_prior} \emph{(\sfit{smoothness of prior})} The prior variance sequence \( \{\sigma_j^2\} \) in \( \priorseq= \bigotimes_{j=1}^{\infty}\normal(0,\sigma_j^2) \) satisfies 
		\begin{equation}\label{eqn:theory_assumptions_prior}
		\sigma_j^2=\Theta(j^{-2p}) \qas j\to\infty  \quad\text{for some}\quad  p\in\R\,.
		\end{equation}
		
		\item \label{item:as_smooth_data} \emph{(\sfit{smoothness of data})} The trace-class covariance operator \( \Lambda\in\cL(H) \) of the input training data distribution \( \nu \) satisfies 
		\begin{equation}\label{eqn:theory_assumptions_train}
		\vartheta_j^2=\ip{\varphi_j}{\Lambda\varphi_j}=\Theta(j^{-2\al}) \qas j\to\infty  \quad\text{for some}\quad \al>1/2\,.
		\end{equation}
		The input test data distribution \( \nu' \) is a centered Borel probability measure with a bounded covariance operator \( \Lambda'\in\cL(H) \) that satisfies 
		\begin{equation}\label{eqn:theory_assumptions_test}
		\vartheta_j'^2=\ip{\varphi_j}{\Lambda'\varphi_j}=\Theta(j^{-2\al'}) \qas j\to\infty  \quad\text{for some}\quad \al'\geq 0\,.
		\end{equation}
		
		\item \label{item:as_exponent} \emph{(\sfit{smoothness range})} It holds that \( (\al\mmin \al') + s >0 \) and \( (\al\mmin \al') + (p-1/2) >0 \).
	\end{enumerate}
\end{assumption}

These assumptions are interpreted as follows.
\subparagraph*{\Cref{item:as_true_diag}}
The diagonalization allows us to identify \( \Ld \) with its eigenvalues \( \ld \). The domain $\dom(\Ld)\defeq \{h\in H \colon \norm{\Ld h}^2=\sum_{j=1}^{\infty} \abs{\ld_j}^2\ip{\varphi_j}{h}^2<\infty \}$ ensures that \( \Ld \) is self-adjoint on \( H \).

\subparagraph*{\Cref{item:as_smooth_truth}}\label{par:item:as_smooth_truth}
The regularity condition \( \ld\in\cH^s \) implicitly determines the sense in which the series expansion for \( \Ld \) in \ref{item:as_true_diag} converges. If \( s\geq 0 \), then \( \Ld\in\HS(H) \). Otherwise, there exists \( \cK\in\cL(H) \) such that \( \Ld\in\HS(H_{\cK};H) \). For example, define \( \cK_{s'}\defeq\sum_j\kappa_j^2\varphi_j\otimes\varphi_j \) with \( \kappa_j^2=j^{2s'} \). Then \( \norm{\Ld}_{\HS(H_{\cK_{s'}};H)}=\norm{\ld}_{\cH^{s'}} \), so \( \Ld\) converges in \(\HS(H_{\cK_{s'}};H) \) for any \( s'\leq s <0 \).

\subparagraph*{\Cref{item:as_smooth_prior}}
The exponent \( p\in\R \) in \cref{eqn:theory_assumptions_prior} adjusts the regularity of prior draws \( l\sim\priorseq \): \( l\in\cH^{s'} \) a.s. for every \( s'<p-1/2 \). The choice \( p=s+1/2 \) thus gives the closest match to the
true regularity of \( \ld\in\cH^s \). Relating back to \cref{sec:setup_bayes_model}, the full prior is \( \mu=B_{\sharp}\priorseq=\normal(0,\Sigma) \). With, e.g., \( \cK=\cK_{s'} \) as above, \( \Sigma \) then satisfies $ \Sigma\varphi_i\otimes\varphi_j={\kappa}_j^2\sigma_j^2\delta_{ij}\varphi_i\otimes\varphi_j $ for all \( i,j \).

\subparagraph*{\Cref{item:as_smooth_data}}
\Cref{eqn:theory_assumptions_train,eqn:theory_assumptions_test} reflect algebraic spectral decay of the input data covariance operators with respect to the eigenbasis \( \{\varphi_j\} \) of \( \Ld \). This provides a weak link between the data distributions and the prior; see \cref{item:intro_ideas_diag_link}. Although sharp bounds such as \cref{eqn:theory_assumptions_train,eqn:theory_assumptions_test} may be difficult to verify when \( \Lambda \) or \( \Lambda' \) is not diagonalized in \( \{\varphi_j\} \), \cref{fig:assump_ex_data_decay} provides strong numerical evidence that exact power law decay can still exist in this setting.

\subparagraph*{\Cref{item:as_exponent}}
The first inequality in \ref{item:as_exponent} ensures that \( \Ld \) has finite \( L^2_{\nu} \) and \( L^2_{\nu'} \) Bochner norms \cref{eqn:bochner_coord}. In particular, $ \nu(\dom(\Ld))=\nu'(\dom(\Ld))=1 $.\footnote{Notice that  we \emph{do not} invoke the \( \cK \)-weighted Hilbert--Schmidt formulation from \cref{sec:setup_bayes_bochner,sec:setup_bayes_model} in \cref{assump:theory_assumptions_main}. Such abstraction is unnecessary for our straightforward diagonal approach (\cref{item:as_true_diag}). In particular, the scalar sequence space model \cref{eqn:intro_ideas_ip_diagonal} is well-defined without reference to any \( \cK \). However, work going beyond diagonal operators may need to use \( \HS(H_{\cK};H) \) spaces, with \( \cK \) satisfying \cref{cond:existence_of_K}.} The second inequality ensures that the prior covariance \( \Sigma \) is trace-class on both $ \HS(H_{\Lambda};H) $ and $ \HS(H_{\Lambda'};H) $. This means \( L\sim\mu=\normal(0,\Sigma) \) has finite \( L^2_{\nu} \) and \( L^2_{\nu'} \) Bochner norms a.s.\,. It follows that the latter two assertions also hold for the posterior \( \post=\normal(\bar{L}^{(N)},\Sigma^{(N)}) \), a.s. with respect to \( D_N \).

\begin{figure}[htbp]%
	\centering
	\subfloat[Decay of \( \ip{\varphi_j}{\Lambda(\tilde{\al})\varphi_j} \) as \( j\to\infty \)]{
		\includegraphics[width=0.3425\textwidth]{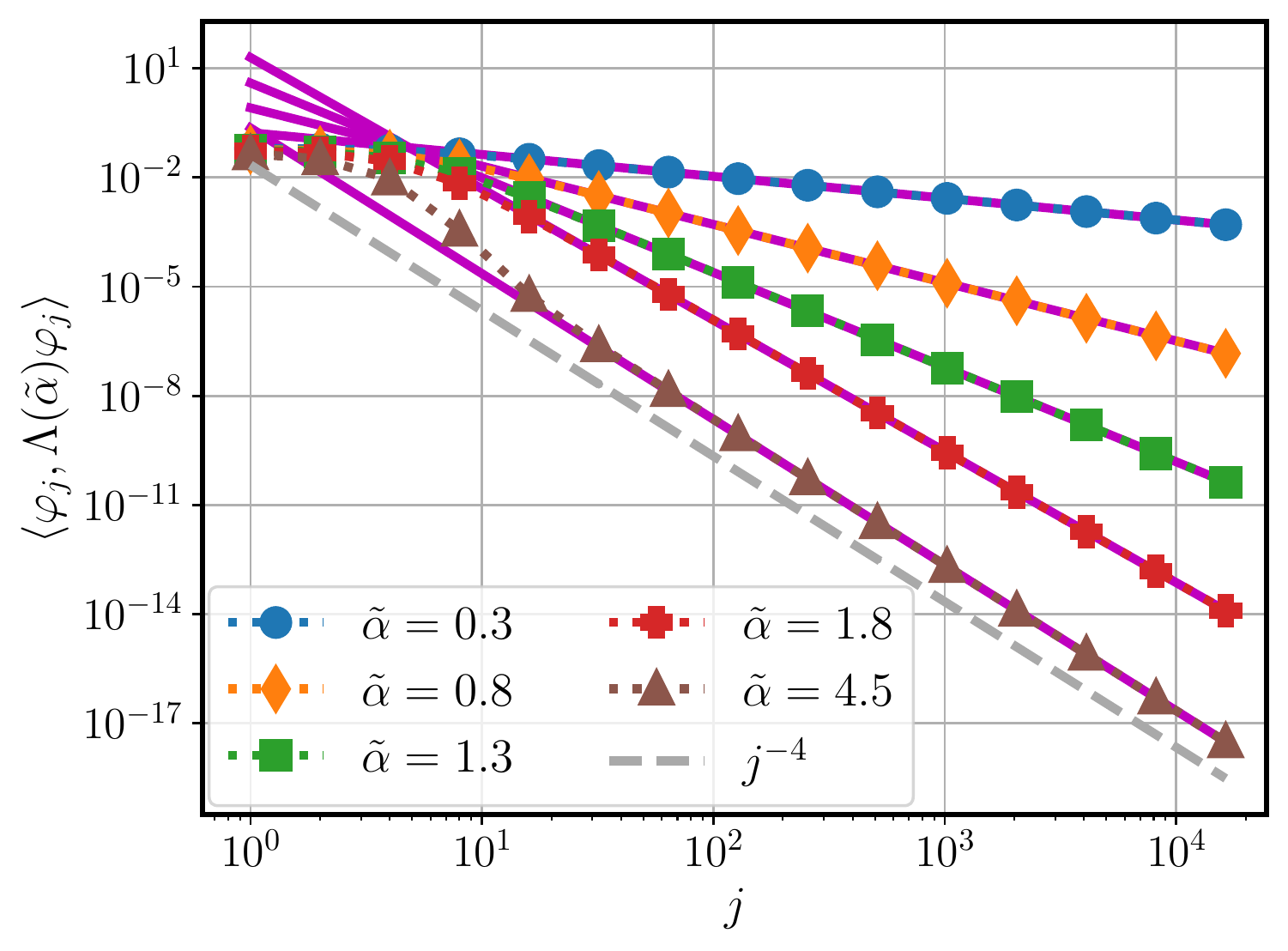}
		\label{fig:cov_decay_volterra_j}}\hspace{0.10\textwidth}%
	\subfloat[Fit to power law as \( \tilde{\al} \) varies]{
		\includegraphics[width=0.32\textwidth]{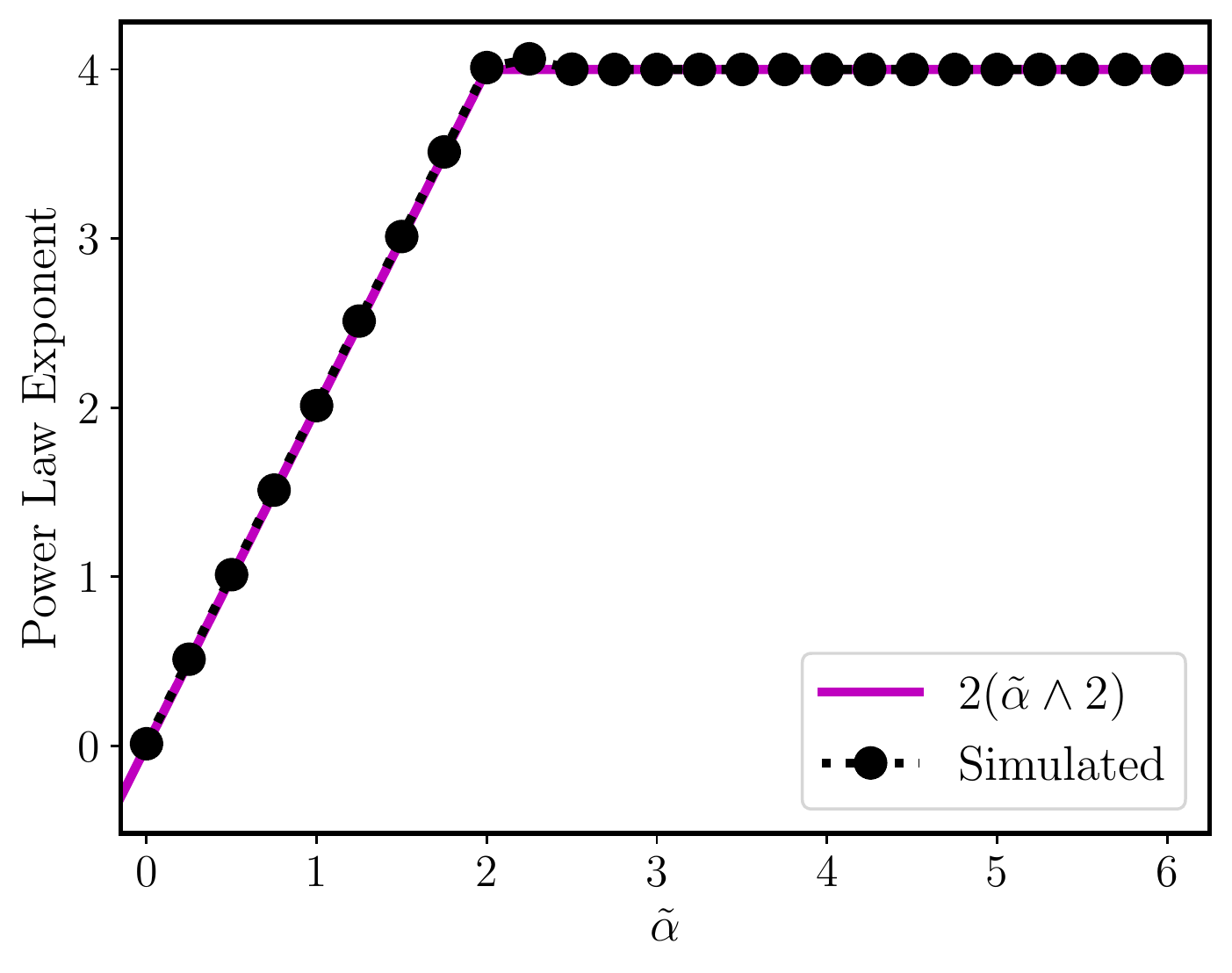}
		\label{fig:cov_decay_volterra_al}}
	\caption{Example of exact power law spectral decay \cref{eqn:theory_assumptions_train} when \( \Lambda \) is not diagonalized by \( \{\varphi_j\} \). Here we choose \( \Lambda=\Lambda(\tilde{\al}) \) such that its eigenpairs \( \{(\lambda_k^2, \phi_k)\} \) satisfy \( \lambda_k^2=15^{2\tilde{\al} - 1}((k\pi)^2+225)^{-\tilde{\al}}=\Theta(k^{-2\tilde{\al}}) \) with \( \tilde{\al}\in\R \) and \( z\mapsto \phi_k(z)=\sqrt{2}\sin(k\pi z) \). We choose output basis \( z\mapsto \varphi_j(z)=\sqrt{2}\cos((j-\frac{1}{2})\pi z) \). Both \( \{\phi_k\} \) and \( \{\varphi_j\} \) are orthonormal bases of \( H=L^2((0,1);\R) \). One can show that \( \vartheta_j^2=\ip{\varphi_j}{\Lambda(\tilde{\al})\varphi_j}=\sum_{k=1}^{\infty}64\pi^{-2}\lambda_k^2 k^2(4(j(j-1)-k^2)+1)^{-2} \). For select \( j\leq 2^{21} \), we sum the first \( 2^{21} \) terms of this series to approximately compute the \( \{\vartheta_j^2\} \). \Cref{fig:cov_decay_volterra_j} shows that \( \vartheta_j^2 \) decays asymptotically as a power law (with magenta lines being linear least square fits) for various \( \tilde{\al} \) (saturating near \( 2\tilde{\al}=4 \)). \Cref{fig:cov_decay_volterra_al} suggests that \( \Lambda \) satisfies assumption~\ref{item:as_smooth_data} with \( \al=\tilde{\al}\mmin 2 \).
	}
	\label{fig:assump_ex_data_decay}
\end{figure}

\subsection{Expectation bounds}\label{sec:theory_expectation}
To develop error bounds in expectation, we only require mild polynomial moment conditions on the input training data measure \( \nu \).
\begin{assumption}[expectation: training data]\label{assump:theory_expectation_train}
	The training data distribution \( \nu \) is a centered Borel probability measure on \( H \) with KL expansion \( x=\sum_{k=1}^\infty \lambda_k \zeta_k\phi_k \sim\nu\). The eigenvalues \( \{\lambda_k^2\} \) of \( \Cov[\nu]=\Lambda \) are ordered to be nonincreasing, and the zero mean and unit variance r.v.s \( \{\zeta_k\} \) are independent, have finite fourth moments, and satisfy \( \E[\zeta_j^4]=O(1)\) as \( j\to\infty \). In particular, \( \E^{x\sim\nu}\norm{x}^4<\infty \). Last, the r.v.s \( \{\avgn{g_j}{g_j}\}_{j,N\in\N} \), defined in \cref{sec:setup_bayes_posterior} as \( \avgn{g_j}{g_j}=\frac{1}{N}\sum_{n=1}^{N}\ip{\varphi_j}{x_n}^2 \), satisfy \( 	\limsup_{N\to\infty} \E[(\avgn{g_j}{g_j})^{-4}]\lesssim\ip{\varphi_j}{\Lambda\varphi_j}^{-4} \) for all \( j\in\N \).
\end{assumption}

Henceforth, it is useful to define the parametrized sequences \( \{J_N\}_{N\in\N} \) and \( \{\rho_N\}_{N\in\N} \)  by
\begin{equation}\label{eqn:theory_expectation_rho_J}
J_N(\al,p)\defeq \floor[\Big]{N^{\frac{1}{2(\al + p)}}}
\qa
\rho_N(\al,\al',p)\defeq
\begin{cases}
N^{-\bigl(1-\frac{\al+1/2 -\al'}{\al+p}\bigr)}\, , &\textit{if }\,\al'<\al+1/2\,,\\
N^{-1}\log N\, , &\textit{if }\,\al'=\al+1/2\,,\\
N^{-1}\, , &\textit{if }\,\al'>\al+1/2\,,
\end{cases}
\end{equation}
respectively, for \( N\in\N \). Notice that \( J_N\to\infty \) (if \( \al+p>0 \)) and \( \rho_N=\Omega(N^{-1}) \) as \( N\to\infty \). Our main result gives asymptotic convergence rates of the test errors from \cref{sec:setup_bayes_testerror}.

\begin{theorem}[expectation: upper bound]\label{thm:upper-expect}
	Let the ground truth \( \Ld \), prior \( \mu \) on \( L \), training data distribution \( \nu \), and test data distribution \( \nu' \) satisfy \cref{assump:theory_assumptions_main,assump:theory_expectation_train}.  Let \( \rho_N=\rho_N(\al,\al',p) \) in \cref{eqn:theory_expectation_rho_J} with \( \al\), \(\al' \), and \( p \) as in \cref{assump:theory_assumptions_main}. Denote by \( \post \) the posterior distribution \cref{eqn:setup_bayes_posterior_notation} for \( L \) arising from the observed data \( D_N= (X,Y) \) in \cref{eqn:intro_ideas_ip_operator}. Then
	\begin{equation}\label{eqn:upper-expect1}
	\E^{D_N}\E^{L^{(N)}\sim\post}\norm[\big]{\Ld - L^{(N)}}_{L^2_{\nu'}(H;H)}^2
	=
	O\bigl(\rho_N\bigr)+
	o\bigl(N^{-\left(\frac{\al'+s}{\al+p}\right)}\bigr)\qas N\to\infty\,,
	\end{equation}
	where the constants in this upper bound depend on $\Ld$ or, equivalently, on $\ld$. Furthermore,
	\begin{equation}\label{eqn:upper-expect1-add}
     \sup_{\norm{\ld}_{\cH^{s}}\lesssim 1}
	\E^{D_N}\E^{L^{(N)}\sim\post}\norm[\big]{\Ld - L^{(N)}}_{L^2_{\nu'}(H;H)}^2
     =
     O\bigl(\rho_N + N^{-\left(\frac{\al'+s}{\al+p}\right)}\bigr)\qas N\to\infty\,.
	\end{equation}
	Both assertions also hold for the test error \cref{eqn:error_bochner_intro2} of the posterior mean \( \bar{L}^{(N)} \).
\end{theorem}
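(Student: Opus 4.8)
The plan is to push everything through the diagonal sequence model and then run a mode-by-mode bias--variance analysis, carrying the randomness of the forward coefficients $\{g_{jn}\}=\{\ip{\varphi_j}{x_n}\}$ throughout. By \cref{fact:posterior_sequence} the posterior $\postseq$ is a product of Gaussians $\normal(\bar{l}_j^{(N)},(\sigma_j^{(N)})^2)$, so (by Fubini--Tonelli, all summands being nonnegative) the posterior-sample test error \cref{eqn:error_bochner_intro} equals $\sum_{j}\vartheta_j'^2\bigl(\E^{D_N}(\sigma_j^{(N)})^2+\E^{D_N}\abs{\ld_j-\bar{l}_j^{(N)}}^2\bigr)$; since $\E^{l^{(N)}\sim\postseq}\abs{\ld_j-l_j^{(N)}}^2=\abs{\ld_j-\bar{l}_j^{(N)}}^2+(\sigma_j^{(N)})^2$, this dominates the posterior-mean test error \cref{eqn:error_bochner_intro2}, and it therefore suffices to bound the former. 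Substituting $y_{jn}=g_{jn}\ld_j+\gamma\xi_{jn}$ into \cref{eqn:posterior_sequence} and writing $G_j\defeq\avgn{g_j}{g_j}$ and $u_j\defeq N\gamma^{-2}\sigma_j^2 G_j$ gives $\ld_j-\bar{l}_j^{(N)}=(1+u_j)^{-1}\bigl(\ld_j-N\gamma^{-1}\sigma_j^2\avgn{\xi_j}{g_j}\bigr)$; integrating out the noise $\Xi$ conditionally on $X$ (the noise is independent of $X$, and $\avgn{\xi_j}{g_j}$ has conditional mean $0$ and conditional second moment $N^{-1}G_j$ given $X$) produces the per-mode conditional risk
\[
\E^{Y\condbar X}\abs[\big]{\ld_j-\bar{l}_j^{(N)}}^2+\bigl(\sigma_j^{(N)}\bigr)^2=\frac{\abs{\ld_j}^2}{(1+u_j)^2}+\frac{\sigma_j^2 u_j}{(1+u_j)^2}+\frac{\sigma_j^2}{1+u_j}.
\]

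Using $(1+u_j)^{-2}\le 1\mmin u_j^{-2}$ and $u_j(1+u_j)^{-2}+(1+u_j)^{-1}\le 2(1\mmin u_j^{-1})$, the quantity to be bounded is controlled by a \emph{bias sum} $\sum_j\vartheta_j'^2\abs{\ld_j}^2\,\E^{X}(1\mmin u_j^{-2})$ plus a constant times a \emph{variance sum} $\sum_j\vartheta_j'^2\sigma_j^2\,\E^{X}(1\mmin u_j^{-1})$. The randomness of $u_j$ enters only through inverse moments of $G_j$: combining the $\limsup_N\E[G_j^{-4}]\lesssim\ip{\varphi_j}{\Lambda\varphi_j}^{-4}$ hypothesis of \cref{assump:theory_expectation_train} with H\"older's inequality and the power laws \cref{eqn:theory_assumptions_prior,eqn:theory_assumptions_train}, one gets, for all large $N$ and uniformly in $j$, $\E^{X}(1\mmin u_j^{-1})\lesssim 1\mmin\bigl(j^{2(\al+p)}/N\bigr)$ and $\E^{X}(1\mmin u_j^{-2})\lesssim 1\mmin\bigl(j^{4(\al+p)}/N^2\bigr)$, with the minima switching at $j\asymp J_N=\floor{N^{1/(2(\al+p))}}$. (In \cite{knapik2011bayesian} this is a triviality because the analogue of $G_j$ is the deterministic singular value $\kappa_j^2$.)

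Splitting each sum at $J_N$ then finishes the argument with only power-sum bookkeeping. For the variance sum, the block $j\le J_N$ is, up to constants, $N^{-1}\sum_{j\le J_N}j^{2(\al-\al')}$, which equals $\Theta(\rho_N)$ in each of the three regimes of \cref{eqn:theory_expectation_rho_J} by comparing the power sum to an integral; the block $j>J_N$ is the tail $\sum_{j>J_N}j^{-2(\al'+p)}$, convergent since \cref{item:as_exponent} forces $\al'+p>1/2$, which is $O(\rho_N)$. For the bias sum, write $\abs{\ld_j}^2=j^{-2s}c_j$ with $\{c_j\}$ summable by \cref{item:as_smooth_truth}: the block $j>J_N$ is $\sum_{j>J_N}j^{-2(\al'+s)}c_j\le J_N^{-2(\al'+s)}\sum_{j>J_N}c_j=o\bigl(N^{-(\al'+s)/(\al+p)}\bigr)$ using $\al'+s>0$ (again \cref{item:as_exponent}) and $\sum_{j>J_N}c_j\to 0$; the block $j\le J_N$ is $N^{-2}\sum_{j\le J_N}j^{4(\al+p)-2(\al'+s)}c_j$, which is $o\bigl(N^{-(\al'+s)/(\al+p)}\bigr)$ when $4(\al+p)-2(\al'+s)>0$ (same tail argument applied to $\{c_j\}$) and $O(N^{-2})=O(\rho_N)$ otherwise. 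Adding the four pieces yields \cref{eqn:upper-expect1}; the finitely many modes outside the range where the power laws hold each contribute $O(N^{-2})=O(\rho_N)$ and are absorbed. For \cref{eqn:upper-expect1-add}, the only $\ld$-dependence above is through $\sum_{j>M}c_j\le\norm{\ld}_{\cH^{s}}^2\lesssim 1$, which is now uniform over the class, so every $o\bigl(N^{-(\al'+s)/(\al+p)}\bigr)$ becomes $O\bigl(N^{-(\al'+s)/(\al+p)}\bigr)$.

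The step I expect to be the main obstacle is the inverse-moment control: the denominators $1+N\gamma^{-2}\sigma_j^2 G_j$ are random, and one must rule out any non-negligible contribution from the event that some $G_j=\avgn{g_j}{g_j}$ is atypically close to $0$, \emph{uniformly over the infinitely many modes}. This is exactly what the inverse fourth-moment hypothesis of \cref{assump:theory_expectation_train} is designed to supply, and it is the essential new ingredient relative to the deterministic forward map of \cite{knapik2011bayesian}; everything else is algebraic bookkeeping with power sums and a standard tail-summability argument for the $o(\cdot)$ terms.
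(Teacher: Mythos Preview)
Your proposal is correct and follows essentially the same route as the paper: the same three-term bias/variance/spread decomposition from \cref{eqn:posterior_sequence}, the same splitting at $J_N=\floor{N^{1/(2(\al+p))}}$, the same use of the inverse-moment hypothesis in \cref{assump:theory_expectation_train} (via Lyapunov/H\"older) to replace the random $G_j$ by $\vartheta_j^2$, and the same power-sum asymptotics that the paper packages as \cref{lem:knapik_1,lem:knapik_2}. The only cosmetic differences are that you merge $\cI_2$ and $\cI_3$ via $u(1+u)^{-2}+(1+u)^{-1}\le 2(1+u)^{-1}$ and phrase the split through $1\mmin u_j^{-k}$; your ``same tail argument'' for the low-frequency bias block is exactly the content of \cref{lem:knapik_1} (the $o(\cdot)$ there needs the standard $\varepsilon$-splitting of $\sum_{j\le J_N}$, not just the crude bound $j^b\le J_N^b$, so be sure to spell that out).
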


\cref{thm:upper-expect} has the same implications as \cref{thm:intro_ideas_thm}, namely, principles \ref{item:intro_ideas_result_output}~to~\ref{item:intro_ideas_result_shift}. The effect of distribution shift \ref{item:intro_ideas_result_shift} in \cref{eqn:upper-expect1} is apparent: increasing \( \al' \) always improves the sample complexity (until the rate \( N^{-1} \) is achieved). We note that the three smoothness cases in the \( \rho_N \) term from \cref{eqn:upper-expect1} are similar to those in functional linear regression \cite{cai2006prediction}. 
The ``matching'' prior smoothness choice $ p= s+1/2 $ leads to asymptotically balanced contributions from both error terms in \cref{eqn:upper-expect1-add}. The rate is then $ N^{-(2\al'+2s)/(1+2\al+2s)} $ if $ \al'<\al+1/2 $ (which for $ \al'=\al $ is minimax optimal \cite{cavalier2008nonparametric,knapik2018general,knapik2011bayesian}) or $ N^{-1} $ (up to logarithms) if $ \al'\geq\al+1/2 $. Principles \ref{item:intro_ideas_result_output} and \ref{item:intro_ideas_result_input} are evident: as $ s $ \emph{decreases} ($ \Ld $ becomes ``less compact'' and possibly unbounded) and \( \al \) \emph{increases} (the \( \{x_n\} \) become smoother), the rates \emph{degrade}. \Cref{fig:intuition_lap} visualizes these rates in various settings. Last, we note that the rate of convergence in \cref{eqn:upper-expect1} can be strictly faster when \( \Ld \) is fixed as opposed to when \( \Ld \) is varying for the worst case error \cref{eqn:upper-expect1-add}. Our interest is mainly in individual bounds (i.e., fixed \( \Ld \)) because these are more useful in practice.

Next, we provide a lower bound corresponding to a given $\Ld$, equivalently, $\ld$.
\begin{theorem}[expectation: lower bound]\label{thm:lower-expect}
	Let the hypotheses of \cref{thm:upper-expect} be satisfied. Let \( J_N=J_N(\al,p) \) in \cref{eqn:theory_expectation_rho_J} with \( \al\) and \( p \) as in \cref{assump:theory_assumptions_main}. Then for any positive sequence \( \{\tau_n\} \) such that $ \tau_{n}\to 0 $ and \( n\tau_n\to\infty \) as \( n\to\infty \), the posterior mean test error satisfies
	\begin{equation}\label{eqn:lower-expect}
	\E^{D_N}\norm[\big]{\Ld - \bar{L}^{(N)}}_{L^2_{\nu'}(H;H)}^2
	=
	\Omega\Bigl(
	\tau_N\rho_N
	+
	\sum\nolimits_{j>J_N}j^{-2\al'}\abs{\ld_{j}}^2
	\Bigr)
	\qas N\to\infty\,.
	\end{equation}
	The same assertion holds for the test error \cref{eqn:error_bochner_intro} of the full posterior \( \post \), but without $ \{\tau_n\} $.
\end{theorem}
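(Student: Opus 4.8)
The plan is to use the explicit diagonal posterior formulas of \cref{fact:posterior_sequence} to reduce both test errors to sums of per-coordinate quantities, and then to produce the two terms of the lower bound from two disjoint ranges of coordinates: a ``resolved'' range $j\le J_N$, where the noise dominates and yields the $\rho_N$ contribution, and a ``tail'' range $j>J_N$, where the bias dominates and yields $\sum_{j>J_N}j^{-2\al'}\abs{\ld_j}^2$.

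First I would set $a_j\defeq N\gamma^{-2}\sigma_j^2\avgn{g_j}{g_j}$, a function of the design $X$ alone, and substitute $y_{jn}=g_{jn}\ld_j+\gamma\xi_{jn}$ into \cref{eqn:posterior_sequence} to get the exact identity $\ld_j-\bar{l}_j^{(N)}=(1+a_j)^{-1}\bigl(\ld_j-N\gamma^{-1}\sigma_j^2\avgn{\xi_j}{g_j}\bigr)$. Since the white noise is independent of $X$, conditioning on $X$ makes $\avgn{\xi_j}{g_j}$ a centred Gaussian with variance $N^{-1}\avgn{g_j}{g_j}$, which gives the bias--variance decompositions
\[
\E^{D_N}\abs{\ld_j-\bar{l}_j^{(N)}}^2=\E^{X}\!\left[\frac{\abs{\ld_j}^2+\sigma_j^2 a_j}{(1+a_j)^2}\right],
\qquad
\E^{D_N}\E^{l^{(N)}\sim\postseq}\abs{\ld_j-l_j^{(N)}}^2=\E^{X}\!\left[\frac{\abs{\ld_j}^2+\sigma_j^2 a_j}{(1+a_j)^2}+\frac{\sigma_j^2}{1+a_j}\right],
\]
where the extra term is $\E^X[(\sigma_j^{(N)})^2]$. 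After multiplying by $\vartheta_j'^2=\Theta(j^{-2\al'})$ and summing over $j$, it then suffices to bound the two resulting sums from below.

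For the tail term I would discard the noise contribution and use that $a\mapsto(1+a)^{-2}$ is convex on $\{a>-1\}$, so $\E^X[(1+a_j)^{-2}]\ge(1+\E a_j)^{-2}$ by Jensen. Items \ref{item:as_smooth_prior} and \ref{item:as_smooth_data} of \cref{assump:theory_assumptions_main} give $\E a_j=N\gamma^{-2}\sigma_j^2\vartheta_j^2=\Theta\!\bigl(Nj^{-2(\al+p)}\bigr)$, and for $j>J_N$ one has $j^{2(\al+p)}>N$, so $\E a_j$ is bounded uniformly over such $j$ and over $N$; hence $\E^X[(1+a_j)^{-2}]\ge c_0>0$ there, and $\sum_{j>J_N}\vartheta_j'^2\,\E^X[\abs{\ld_j}^2(1+a_j)^{-2}]\gtrsim\sum_{j>J_N}j^{-2\al'}\abs{\ld_j}^2$. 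For the $\rho_N$ term I would restrict to $j\le J_N$, where $\E a_j\gtrsim1$. For the full posterior this is immediate: $a\mapsto(1+a)^{-1}$ is convex, so $\E^X[\sigma_j^2(1+a_j)^{-1}]\ge\sigma_j^2(1+\E a_j)^{-1}\gtrsim\sigma_j^2/\E a_j=\gamma^2/(N\vartheta_j^2)\asymp N^{-1}j^{2\al}$, and summing gives $\sum_{j\le J_N}\vartheta_j'^2\,\gamma^2/(N\vartheta_j^2)\asymp N^{-1}\sum_{j\le J_N}j^{2(\al-\al')}\asymp\rho_N$ (using $J_N\asymp N^{1/(2(\al+p))}$ and separating the cases $\al'<\al+1/2$, $\al'=\al+1/2$, $\al'>\al+1/2$ to reproduce the three regimes of \cref{eqn:theory_expectation_rho_J}). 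For the posterior mean there is no $(1+a_j)^{-1}$ term, so instead I must bound $\E^X[\sigma_j^2 a_j(1+a_j)^{-2}]$ from below, and $a\mapsto a(1+a)^{-2}$ is not convex. I would work on the event $\{\tfrac12\E a_j\le a_j\le2\E a_j\}$, on which (since $\E a_j\gtrsim1$) the integrand is $\gtrsim\sigma_j^2/\E a_j$, and lower bound its probability by Chebyshev using $\Var[\avgn{g_j}{g_j}]\le N^{-1}\E[g_{j1}^4]\lesssim N^{-1}\vartheta_j^4$, which follows from the bounded fourth moments of $\{\zeta_k\}$ in \cref{assump:theory_expectation_train} together with $g_{j1}=\sum_k\ip{\varphi_j}{\phi_k}\lambda_k\zeta_k$ and $\sum_k\ip{\varphi_j}{\phi_k}^2\lambda_k^2=\vartheta_j^2$. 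Summing over $j\le J_N$ then recovers the $\rho_N$-order contribution, up to the (at worst sub-polynomial) slack encoded by the free sequence $\tau_N$. Finally, since the bias and variance pieces of $\E^{D_N}\norm{\Ld-\bar{L}^{(N)}}_{L^2_{\nu'}(H;H)}^2$ are separately nonnegative, adding the two bounds yields $\Omega\bigl(\tau_N\rho_N+\sum_{j>J_N}j^{-2\al'}\abs{\ld_j}^2\bigr)$ for the mean and $\Omega\bigl(\rho_N+\sum_{j>J_N}j^{-2\al'}\abs{\ld_j}^2\bigr)$ for the full posterior.

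The step I expect to be the main obstacle is the posterior-mean variance bound: obtaining a matching lower estimate for $\E^X[\sigma_j^2 a_j(1+a_j)^{-2}]$ requires \emph{uniform} (in $j$, over the whole growing range $j\lesssim J_N$) relative concentration of the random-design Gram averages $\avgn{g_j}{g_j}$ about $\vartheta_j^2$, even though the coordinate processes $\{g_{jn}\}_{j}$ are mutually correlated, and it requires control of the reciprocal moments $\E[(\avgn{g_j}{g_j})^{-4}]$ when $\E a_j\gg1$. Supplying exactly these fluctuation estimates is the role of the fourth-moment and negative-moment hypotheses in \cref{assump:theory_expectation_train}, and it is why the posterior-mean bound carries the extra factor $\tau_N$ whereas the full-posterior bound does not.
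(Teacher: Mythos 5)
Your proposal is essentially sound and, for the critical step, takes a genuinely different and arguably cleaner route than the paper. The skeleton is the same: both decompose the test error into the bias series $\cI_1$, the estimation-variance series $\cI_2$, and (for the full posterior) the posterior-spread series $\cI_3$ as in \cref{eqn:proofs_iall}, and both lower bound the tail of $\cI_1$ and the head of $\cI_2$ (resp. $\cI_3$) by splitting at $J_N$. The tail argument (Jensen on the convex map $a\mapsto(1+a)^{-2}$ plus $\E a_j\lesssim 1$ for $j>J_N$) and the full-posterior argument (Jensen on $a\mapsto(1+a)^{-1}$) agree with the paper up to packaging. The difference is in how each handles $\E\cI_2^{\leq}$ for the posterior mean. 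The paper restricts to the one-sided event $A_j^{(N)}=\{Z_j^{(N)}\geq\tau_N\}$ with a \emph{vanishing} threshold $\tau_N\to 0$, then uses conditional Jensen and Markov's inequality (via the negative-moment hypothesis of \cref{assump:theory_expectation_train}) to show $\P(A_j^{(N)})\to 1$; the price for lower-bounding the numerator by $\tau_N$ on $A_j^{(N)}$ is exactly the extraneous $\tau_N$ factor that the paper concedes ``is likely an artifact of our proof technique.'' You instead restrict to the two-sided event $\{\tfrac12\vartheta_j^2\leq\avgn{g_j}{g_j}\leq 2\vartheta_j^2\}$ with \emph{fixed} constants, bound its failure probability by $O(N^{-1})$ uniformly in $j$ via Chebyshev and the positive fourth-moment bound $\E[g_{j1}^4]\lesssim\vartheta_j^4$, and observe that $a_j(1+a_j)^{-2}\gtrsim(\E a_j)^{-1}$ on that event once $\E a_j\gtrsim 1$ (using both the upper and lower cutoffs). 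This yields $\E\cI_2^{\leq}=\Omega(\rho_N)$ with no $\tau_N$, which subsumes the theorem's claim of $\Omega(\tau_N\rho_N)$ for every admissible $\{\tau_n\}$. Your approach trades the paper's negative-moment hypothesis for the fourth-moment hypothesis --- both are available under \cref{assump:theory_expectation_train} --- and the fourth-moment route is exactly the computation the paper itself carries out later in the proof of \cref{thm:gen_gap_expect}, so the ingredients are already present. One small correction to your own commentary: your argument does not in fact need the reciprocal-moment control $\E[(\avgn{g_j}{g_j})^{-4}]$ that you flag as a possible obstacle; the Chebyshev bound alone suffices, so the closing concern about why the $\tau_N$ factor appears is moot in your version --- it simply does not appear.
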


The tail series term in \cref{eqn:lower-expect} is closely related to the lower bound in \cite[Thm. 3.4]{lanthaler2022error} for nonlinear operator learning because both involve the spectral decay of the covariance operator of the pushforward measure \( \Ld_{\sharp}\nu' \). Since this tail term is order $ N^{-(\al'+s)/(\al+p)}o(1) $ by \cref{eqn:knapik_1p12} in \cref{lem:knapik_1}, the lower bound \cref{eqn:lower-expect} ``matches'' the corresponding terms in the individual upper bound \cref{eqn:upper-expect1} up to \( o(1) \) factors. But without further conditions on \( \ld \) (and hence knowledge about the \( o(1) \) factors), the bounds are not guaranteed to be sharp in the over-smoothing prior regime $ p>s+1/2 $. The rates do match (up to \( \tau_N \) in \cref{eqn:lower-expect} for \( \bar{L}^{(N)} \), but \( \tau_N \) is under control) for under-smoothing priors with $ p\leq s+1/2 $ because the \( \rho_N \) terms in both \cref{thm:upper-expect,thm:lower-expect} dominate. The \( \{\tau_n\} \) factor is likely an artifact of our proof technique. By choosing \( \ld \) such that \( \abs{\ld_j}=J_N^{-s}\delta_{j-1,J_N} \) in \cref{eqn:lower-expect}, \cref{thm:lower-expect} also implies that \cref{eqn:upper-expect1-add} is truly sharp.

So far, we have assumed that \( \ld\in\cH^{s} \) for some \( s \). However, this does not preclude the possibility that \( \ld\in\cH^{s'} \) for another $s'>s$. For example, the previous theorems account for operators with analytic spectral smoothness (see \cref{sec:intro_examples}): \( \ld_j\asymp \exp(-c_1j^{c_2}) \) for \( c_1\) and  \(c_2>0 \) (here \( \ld\in \cH^{s} \) for every \( s\in\R \)). Nevertheless, many scientific problems are naturally distinguished by \emph{regularly varying} eigenvalues. These behave like a power law up to a slowly varying function \( S\colon\R_{\geq 0}\to\R_{\geq 0} \) \cite{bingham1989regular} (this means that \( S(\lambda x)/S(x)\to 1 \) as \( x\to\infty \) for every \( \lambda>0 \); examples include logarithms or functions with positive limit). The following sharp convergence result concerns posterior sample estimates of regularly varying true eigenvalues. Similar may be proved for the posterior mean, but the upper and lower bounds must be considered separately as in \cref{thm:upper-expect,thm:lower-expect}. The implications are the same.

\begin{theorem}[asymptotically sharp bound for regularly varying eigenvalues]\label{thm:theory_expectation_rvsharp}
	Let the hypotheses of \cref{thm:upper-expect} be satisfied, but instead of \ref{item:as_smooth_truth}, let \( \Ld \) be such that $\abs{\ld_j} =\Theta(j^{-1/2-s}S(j))$ as \( j\to\infty \) for some slowly varying function \( S \) at infinity. Let \( J_N=J_N(\al,p) \) in \cref{eqn:theory_expectation_rho_J}. Then
	\begin{equation}\label{eqn:theory_expectation_rvsharp}
	\E^{D_N}\E^{L^{(N)}\sim\post}\norm[\big]{\Ld - L^{(N)}}_{L^2_{\nu'}(H;H)}^2
	= \Theta\bigl(\rho_N + N^{-\left(\frac{\al'+s}{\al+p}\right)}S^2(J_N)\bigr) \qas N\to\infty \,.
	\end{equation}
\end{theorem}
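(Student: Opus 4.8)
The plan is to obtain the matching upper and lower bounds by refining the arguments behind \cref{thm:upper-expect,thm:lower-expect}, the one genuinely new ingredient being Karamata's theorem on sums of regularly varying sequences, used to track the slowly varying factor $S$. Write the test error of the posterior sample estimator via its bias--variance decomposition over the product posterior \cref{fact:posterior_sequence}:
\[
\E^{D_N}\E^{l^{(N)}\sim\postseq}\sum\nolimits_{j=1}^{\infty}\vartheta_j'^2\abs[\big]{\ld_j - l_j^{(N)}}^2 = \sum\nolimits_{j=1}^{\infty}\vartheta_j'^2\Bigl(\E^{D_N}\abs[\big]{\ld_j - \bar{l}_j^{(N)}}^2 + \E^{D_N}(\sigma_j^{(N)})^2\Bigr)\,.
\]
Substituting $y_{jn}=g_{jn}\ld_j+\gamma\xi_{jn}$ into \cref{eqn:posterior_sequence} and simplifying yields the identity $\ld_j - \bar{l}_j^{(N)} = (\ld_j - N\gamma^{-1}\sigma_j^2\avgn{\xi_j}{g_j})\,(1+N\gamma^{-2}\sigma_j^2\avgn{g_j}{g_j})^{-1}$. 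Since $\Xi$ is independent of $X$ with $\E[\avgn{\xi_j}{g_j}^2\condbar X]=N^{-1}\avgn{g_j}{g_j}$, the cross term vanishes under $\E^{D_N}$ and the test error decomposes into three nonnegative pieces: a \emph{deterministic-bias} part $\sum_j\vartheta_j'^2\ld_j^2\,\E^{D_N}[(1+N\gamma^{-2}\sigma_j^2\avgn{g_j}{g_j})^{-2}]$; a \emph{noise-bias} part $\sum_j\vartheta_j'^2 N\gamma^{-2}\sigma_j^4\,\E^{D_N}[\avgn{g_j}{g_j}(1+N\gamma^{-2}\sigma_j^2\avgn{g_j}{g_j})^{-2}]$; and the \emph{posterior-variance} part $\sum_j\vartheta_j'^2\E^{D_N}(\sigma_j^{(N)})^2$.

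First I would show the noise-bias and posterior-variance parts together are $\Theta(\rho_N)$. The bound $O(\rho_N)$ is already implicit in the proof of \cref{thm:upper-expect}, which does not use \ref{item:as_smooth_truth}: one controls $\avgn{g_j}{g_j}$ from below through the inverse-moment hypothesis $\limsup_N\E[(\avgn{g_j}{g_j})^{-4}]\lesssim\vartheta_j^{-8}$ of \cref{assump:theory_expectation_train}, then splits at $j\asymp J_N$ and identifies $N^{-1}\sum_{j\le J_N}j^{2\al-2\al'}$ with $\rho_N$ in each of the three regimes of \cref{eqn:theory_expectation_rho_J}. For the matching lower bound $\Omega(\rho_N)$ it suffices to use the posterior-variance part alone: on the event $\{\avgn{g_j}{g_j}\le 2\vartheta_j^2\}$, which by Chebyshev and the fourth-moment control on $\{\zeta_k\}$ in \cref{assump:theory_expectation_train} has probability bounded away from zero uniformly in $N$, one has $(\sigma_j^{(N)})^2\gtrsim(N\vartheta_j^2)^{-1}$ for $j\le cJ_N$, so $\sum_j\vartheta_j'^2\E^{D_N}(\sigma_j^{(N)})^2\gtrsim N^{-1}\sum_{j\le cJ_N}j^{2\al-2\al'}\asymp\rho_N$. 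This is exactly why the spurious factor $\{\tau_n\}$ of \cref{thm:lower-expect} does not appear here: unlike for the posterior mean, the sample estimator carries an unconditional posterior-variance contribution.

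The crux is the deterministic-bias part, which must produce the factor $S^2(J_N)$. I would split at $j=J_N$. For $j>J_N$ one has $N\gamma^{-2}\sigma_j^2\avgn{g_j}{g_j}\lesssim N\sigma_j^2\vartheta_j^2\lesssim NJ_N^{-2(\al+p)}\asymp 1$ on the good event, so the expected shrinkage factor is $\asymp 1$ there, and this tail contributes $\asymp\sum_{j>J_N}j^{-2\al'}\ld_j^2\asymp\sum_{j>J_N}j^{-2\al'-1-2s}S^2(j)$. Since \ref{item:as_exponent} forces $\al'+s\ge(\al\mmin\al')+s>0$, the exponent $-2\al'-1-2s<-1$ and the tail-sum form of Karamata's theorem gives $\sum_{j>J_N}j^{-2\al'-1-2s}S^2(j)\sim c\,J_N^{-2\al'-2s}S^2(J_N)=\Theta(N^{-(\al'+s)/(\al+p)}S^2(J_N))$, using that $S^2$ slowly varying implies $S^2(cJ_N)\sim S^2(J_N)$. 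For $j\le J_N$ the expected shrinkage factor is $\asymp(N\sigma_j^2\vartheta_j^2)^{-1}$ (upper direction via the inverse-moment bound of \cref{assump:theory_expectation_train}, lower direction via the good event), so this head is $\asymp N^{-2}\sum_{j\le J_N}j^{4p+4\al-2\al'-1-2s}S^2(j)$, which by the partial-sum form of Karamata's theorem is $O(N^{-(\al'+s)/(\al+p)}S^2(J_N)+N^{-2})$, the $N^{-2}$ term being absorbed into $\rho_N=\Omega(N^{-1})$. Hence the deterministic-bias part is $\Theta(N^{-(\al'+s)/(\al+p)}S^2(J_N))$ up to terms $o(\rho_N)$, and adding the $\Theta(\rho_N)$ from the stochastic parts yields \cref{eqn:theory_expectation_rvsharp}.

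I expect the main obstacle to be the lower bound on the deterministic-bias tail, i.e.\ verifying that the expectation over the random design $X$ does not destroy the slowly varying factor: one needs $\avgn{g_j}{g_j}\simeq\vartheta_j^2$ to hold \emph{simultaneously} for all $j$ in a dyadic block just above $J_N$ with probability bounded away from zero uniformly in $N$, so that the dominant Karamata terms survive the expectation rather than being supported on a vanishing event. A union bound over the $\Theta(1)$ dyadic levels near $J_N$, each controlled by the fourth-moment hypotheses and the independence structure in \cref{assump:theory_expectation_train}, should suffice; apart from this uniform-in-$j$ concentration, the argument is bookkeeping with Karamata's theorem together with the estimates already developed for \cref{thm:upper-expect,thm:lower-expect}.
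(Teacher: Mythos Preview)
Your proposal is correct and follows the same architecture as the paper: decompose into the three series $\cI_1,\cI_2,\cI_3$ of \cref{eqn:proofs_iall}, get $\Theta(\rho_N)$ from $\cI_2+\cI_3$ via the arguments of \cref{thm:upper-expect,thm:lower-expect}, and get the $N^{-(\al'+s)/(\al+p)}S^2(J_N)$ term from $\cI_1$ via Karamata-type asymptotics for regularly varying sums. The paper simply cites \cite[Lem.~8.2]{knapik2011bayesian} for the latter, which packages exactly the head/tail Karamata computation you sketch.

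The one place your proposal is more complicated than necessary is the lower bounds, and the ``main obstacle'' you flag is a phantom. You do \emph{not} need any simultaneous concentration of $\avgn{g_j}{g_j}$ across a dyadic block. For both $\cI_1$ and $\cI_3$ the paper obtains lower bounds termwise by Jensen's inequality: since $r\mapsto(1+ar)^{-1}$ and $r\mapsto(1+ar)^{-2}$ are convex on $[0,\infty)$ and $\E[\avgn{g_j}{g_j}]=\vartheta_j^2$, one has
\[
\E^{X}\bigl[(1+N\sigma_j^2\avgn{g_j}{g_j})^{-v}\bigr]\ \ge\ (1+N\sigma_j^2\vartheta_j^2)^{-v}\qquad(v=1,2)
\]
for every $j$, with no probabilistic event required. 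Linearity of expectation then gives the deterministic series lower bound directly, to which your Karamata calculation applies verbatim. The same Jensen argument for $v=1$ on $\cI_3$ yields $\E\cI_3=\Omega(\rho_N)$ without any good-event construction; this is precisely why the $\tau_N$ factor of \cref{thm:lower-expect} is absent for the posterior sample estimator. Your Chebyshev/event-based route would also work, but it is strictly more effort for the same conclusion.
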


Although we have thus far restricted our attention to the Gaussian white noise model \cref{eqn:intro_ideas_ip_operator}, the next corollary shows that our theory remains valid for \emph{smoother} Gaussian noise.
\begin{corollary}[colored noise]\label{cor:theory_expectation_color}
	Suppose that the Gaussian distribution of the $\{\xi_n\}$ determining the data \( Y \) in \cref{eqn:intro_ideas_ip_operator} is not necessarily white, but is instead given by \( \pi=\normal(0,\Gamma) \), where \( \Gamma\in\cL(H) \) is symmetric positive-definite with eigenbasis \( \{\varphi_j\} \) shared with \( \Ld \) and eigenvalues \( \lambda_j(\Gamma)=\Theta(j^{-2\beta}) \) as \( j\to\infty \) for some \( \beta\geq 0 \). Let \( \postseq \) be given by \cref{eqn:posterior_sequence} except with each \( \gamma^2 \) replaced by \( \gamma^2\lambda_j(\Gamma) \). Let the hypotheses of \cref{thm:upper-expect,thm:lower-expect,thm:theory_expectation_rvsharp} hold, respectively, except let \( \rho_N=\rho_N(\al-\beta,\al',p) \), \( J_N=J_N(\al-\beta,p) \), and instead of \ref{item:as_exponent}, let \( \min\{\al-\beta,\al'\} + \min\{p-1/2, s\} >0 \). Then the results of \cref{thm:upper-expect,thm:lower-expect,thm:theory_expectation_rvsharp} remain valid, respectively, if in each display \cref{eqn:upper-expect1,eqn:upper-expect1-add,eqn:lower-expect,eqn:theory_expectation_rvsharp} every instance of \( \al \) is replaced by \( \al-\beta \).
\end{corollary}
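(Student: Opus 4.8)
The plan is to reduce the colored-noise problem to the white-noise problem already solved, by a coordinatewise pre-whitening of the sequence model \cref{eqn:intro_ideas_ip_diagonal}. Under the stated hypotheses the data are $y_{jn}=g_{jn}\ld_j+\gamma\sqrt{\lambda_j(\Gamma)}\,\xi_{jn}$ with $\xi_{jn}\diid\normal(0,1)$, because $\Gamma$ shares the eigenbasis $\{\varphi_j\}$ with $\Ld$. Dividing the $j$-th coordinate through by $\sqrt{\lambda_j(\Gamma)}$ and setting $\hat y_{jn}\defeq\lambda_j(\Gamma)^{-1/2}y_{jn}$ and $\hat g_{jn}\defeq\lambda_j(\Gamma)^{-1/2}g_{jn}$ yields $\hat y_{jn}=\hat g_{jn}\ld_j+\gamma\xi_{jn}$, which is again of the form \cref{eqn:intro_ideas_ip_diagonal} but with design coefficients $\{\hat g_{jn}\}$ replacing $\{g_{jn}\}$. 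A direct computation with \cref{fact:posterior_sequence}, using $\avgn{\hat y_j}{\hat g_j}=\lambda_j(\Gamma)^{-1}\avgn{y_j}{g_j}$ and $\avgn{\hat g_j}{\hat g_j}=\lambda_j(\Gamma)^{-1}\avgn{g_j}{g_j}$, shows that the posterior for $\{l_j\}$ in this whitened model is exactly the modified $\postseq$ defined in the statement (the one with $\gamma^2$ replaced by $\gamma^2\lambda_j(\Gamma)$). The true eigenvalues $\ld$, the product prior $\priorseq$, and the test-error functionals $\sum_j\vartheta_j'^2\abs{\ld_j-l_j^{(N)}}^2$ are all untouched by this rescaling.

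It then remains to verify that the whitened model satisfies the hypotheses of \cref{thm:upper-expect,thm:lower-expect,thm:theory_expectation_rvsharp} with $\al$ replaced by $\al-\beta$, and to apply those theorems. Assumptions \ref{item:as_true_diag}, \ref{item:as_smooth_truth}, \ref{item:as_smooth_prior}, and the decay \cref{eqn:theory_assumptions_test} for $\nu'$ are unchanged. The effective design variance is
\begin{equation*}
\hat\vartheta_j^2\defeq\Var[\hat g_{j1}]=\lambda_j(\Gamma)^{-1}\vartheta_j^2=\Theta(j^{2\beta})\cdot\Theta(j^{-2\al})=\Theta\bigl(j^{-2(\al-\beta)}\bigr)\,,
\end{equation*}
so \ref{item:as_smooth_data} holds with $\al-\beta$ in place of $\al$ provided $\al-\beta>1/2$ (which we take to be in force so that the rescaled design remains admissible), while the modified \ref{item:as_exponent} is precisely the stated replacement $\min\{\al-\beta,\al'\}+\min\{p-1/2,s\}>0$. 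For \cref{assump:theory_expectation_train}: since $\hat g_{jn}=\lambda_j(\Gamma)^{-1/2}g_{jn}$ is a deterministic coordinatewise rescaling, $\{\hat g_{jn}\}_{n=1}^N$ is again i.i.d.\ with $\E[\hat g_{jn}\hat g_{jn'}]=0$ for $n\neq n'$, and the two moment estimates used in the proofs transport: $\E[\hat g_{jn}^4]=\lambda_j(\Gamma)^{-2}\E[g_{jn}^4]=O(\hat\vartheta_j^4)$ and $\limsup_N\E[(\avgn{\hat g_j}{\hat g_j})^{-4}]=\lambda_j(\Gamma)^{4}\limsup_N\E[(\avgn{g_j}{g_j})^{-4}]\lesssim\lambda_j(\Gamma)^4\vartheta_j^{-8}=\hat\vartheta_j^{-8}=\ip{\varphi_j}{\hat\Lambda\varphi_j}^{-4}$, where $\hat\Lambda\defeq\Gamma^{-1/2}\Lambda\Gamma^{-1/2}$ is trace-class exactly when $\al-\beta>1/2$. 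Feeding the whitened data into \cref{thm:upper-expect,thm:lower-expect,thm:theory_expectation_rvsharp} produces \cref{eqn:upper-expect1,eqn:upper-expect1-add,eqn:lower-expect,eqn:theory_expectation_rvsharp} with every $\al$ replaced by $\al-\beta$ and with $\rho_N=\rho_N(\al-\beta,\al',p)$, $J_N=J_N(\al-\beta,p)$, which is the conclusion of the corollary.

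The one delicate point, and the main obstacle, is that \cref{assump:theory_expectation_train} is phrased through a Karhunen--Lo\`eve expansion $x=\sum_k\lambda_k\zeta_k\phi_k$ with \emph{independent} coefficients, and the pre-whitening map $\Gamma^{-1/2}$ does not commute with $\Lambda$ in general, so the whitened ``input measure'' $(\Gamma^{-1/2})_\sharp\nu$ need not have independent KL coefficients in its own eigenbasis; one therefore cannot literally invoke the earlier theorems as black boxes for that measure. The resolution, consistent with the independence-agnostic nature of the proofs remarked after \cref{eqn:setup_bayes_gjn}, is that those proofs use \cref{assump:theory_expectation_train} only through (a) the i.i.d.-in-$n$ structure of $\{g_{jn}\}_{n=1}^N$ for each fixed $j$ and (b) the two fourth-moment bounds displayed above, both of which survive the coordinatewise rescaling $g_{jn}\mapsto\lambda_j(\Gamma)^{-1/2}g_{jn}$ with $\vartheta_j^2$ replaced by $\hat\vartheta_j^2$. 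Accordingly I would either re-run the arguments of \cref{thm:upper-expect,thm:lower-expect,thm:theory_expectation_rvsharp} line by line with $\hat g_{jn}$ in place of $g_{jn}$, or — more economically — isolate the handful of sequence-level consequences of \cref{assump:theory_expectation_train} that they actually invoke and check these directly for $\{\hat g_{jn}\}$, as above.
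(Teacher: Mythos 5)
Your proposal is correct and follows the same pre-whitening reduction that the paper sketches informally in the remark immediately after the corollary: divide the $j$-th coordinate of \cref{eqn:intro_ideas_ip_diagonal} by $\sqrt{\lambda_j(\Gamma)}$ (equivalently, apply $\Gamma^{-1/2}$ to data and inputs, using that $\Gamma$ and $\Ld$ commute), and then apply \cref{thm:upper-expect,thm:lower-expect,thm:theory_expectation_rvsharp} with effective design variances $\hat\vartheta_j^2=\lambda_j(\Gamma)^{-1}\vartheta_j^2=\Theta\bigl(j^{-2(\al-\beta)}\bigr)$. You also sharpen a point the paper leaves tacit: the whitened input law $(\Gamma^{-1/2})_\sharp\nu$ need not literally satisfy \cref{assump:theory_expectation_train} (its KL coefficients in its own eigenbasis are merely uncorrelated when $\Gamma$ and $\Lambda$ do not commute), but you correctly observe that the proofs of those three theorems invoke only the i.i.d.-in-$n$ structure of $\{g_{jn}\}$ and the negative-moment bound $\limsup_N\E[(\avgn{g_j}{g_j})^{-4}]\lesssim\vartheta_j^{-8}$, both of which survive the deterministic coordinatewise rescaling $g_{jn}\mapsto\lambda_j(\Gamma)^{-1/2}g_{jn}$.
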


The corollary follows from the hypothesis that \( \Gamma \) and \( \Ld \) commute. Indeed, pre-whitening the new output data gives \( \Gamma^{-1/2}y_n=\Ld\Gamma^{-1/2}x_n+\gamma\normal(0,\id) \), which our existing theory can handle. This result implies that larger \( \beta \) (smoother noise) improves convergence rates because the input data smoothness has effectively been reduced from \( \al \) to \( \al-\beta \) (see principle \ref{item:intro_ideas_result_input}).

\subsection{Posterior contraction}\label{sec:theory_contraction}
The performance of Bayesian procedures is often quantified by the rate of contraction of the posterior around the true data-generating parameter as $ N\to\infty $. In the setting of operator learning, we follow \cite{agapiou2013posterior,agapiou2014bayesian,knapik2018general,knapik2011bayesian} and consider finding a positive sequence $ \ep_N\to 0 $ such that for any positive sequence $ M_N\to\infty $, it holds that
\begin{equation}\label{eqn:posterior_contraction}
\E^{D_N}\post\bigl(\{ L\colon  \norm{\Ld - L}_{L_{\nu'}^{2}(H;H)}\geq M_N\ep_N \} \bigr)\to 0 \qas N\to\infty\, .
\end{equation}
We say that $ \ep_N $ is a \emph{contraction rate} of the posterior $ \post $ with respect to the $ L_{\nu'}^2(H;H) $ Bochner norm. By Chebyshev's inequality, \( (M_n\ep_N)^{-2} \) times the posterior test error \cref{eqn:error_bochner_intro}
is an upper bound for the left hand side of \cref{eqn:posterior_contraction}.
Thus, the limit in \cref{eqn:posterior_contraction} holds true if \cref{eqn:error_bochner_intro} is $ O(\ep_N^2) $ as $ N\to\infty $. The next corollary is then a consequence of \cref{thm:upper-expect}.
\begin{corollary}[posterior contraction]\label{cor:spc}
	Let the hypotheses of \cref{thm:upper-expect} be satisfied. Then any sequence $ \{\ep_N\}_{N\in\N} $ such that $ \ep_N^2 $ is of the order of the right hand side of \cref{eqn:upper-expect1} as $ N\to\infty $ is a contraction rate of $ \post $ with respect to the $ L_{\nu'}^2(H;H) $ Bochner norm.
\end{corollary}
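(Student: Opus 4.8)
The plan is to obtain the corollary as a direct consequence of the in-expectation upper bound in \cref{thm:upper-expect}, using nothing more than a conditional Markov inequality, exactly as indicated in the paragraph preceding the statement. Fix a sequence $\{\ep_N\}_{N\in\N}$ for which $\ep_N^2$ is of the order of the right-hand side of \cref{eqn:upper-expect1} as $N\to\infty$, and let $\{M_N\}_{N\in\N}$ be an arbitrary positive sequence with $M_N\to\infty$. The goal is to verify \cref{eqn:posterior_contraction} for this pair.

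First I would record the well-definedness needed below: under \cref{assump:theory_assumptions_main}, in particular the second inequality of \ref{item:as_exponent}, the posterior $\post=\normal(\bar L^{(N)},\Sigma^{(N)})$ has finite $L^2_{\nu'}(H;H)$ Bochner norm almost surely with respect to $D_N$ (this is exactly one of the ``latter two assertions'' noted in the discussion of \ref{item:as_exponent} in \cref{sec:theory_assumptions}), so the inner expectation appearing below is finite $\P$-a.s. Then, conditionally on $D_N$, applying Markov's inequality to the nonnegative random variable $\norm{\Ld - L^{(N)}}_{L^2_{\nu'}(H;H)}^2$ with $L^{(N)}\sim\post$ gives
\[
\post\bigl(\{L\colon \norm{\Ld - L}_{L^2_{\nu'}(H;H)}\geq M_N\ep_N\}\bigr)
\leq \frac{1}{M_N^2\ep_N^2}\,\E^{L^{(N)}\sim\post}\norm[\big]{\Ld - L^{(N)}}_{L^2_{\nu'}(H;H)}^2 \,.
\]
Taking $\E^{D_N}$ of both sides, with the exchange of expectation and the (implicit) integration justified by Tonelli since the integrand is nonnegative, yields
\[
\E^{D_N}\post\bigl(\{L\colon \norm{\Ld - L}_{L^2_{\nu'}(H;H)}\geq M_N\ep_N\}\bigr)
\leq \frac{1}{M_N^2\ep_N^2}\,\E^{D_N}\E^{L^{(N)}\sim\post}\norm[\big]{\Ld - L^{(N)}}_{L^2_{\nu'}(H;H)}^2 \,.
\]

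Next I would invoke \cref{thm:upper-expect}: the double expectation on the right is precisely the posterior test error \cref{eqn:error_bochner_intro}, which is $O(\rho_N) + o(N^{-(\al'+s)/(\al+p)})$ as $N\to\infty$, that is, of the order of the right-hand side of \cref{eqn:upper-expect1}. By the defining property of $\{\ep_N\}$, this quantity is $O(\ep_N^2)$. Hence the right-hand side of the last display is $O(M_N^{-2})$, which tends to $0$ because $M_N\to\infty$. This establishes \cref{eqn:posterior_contraction} and identifies $\ep_N$ as a contraction rate of $\post$ with respect to the $L^2_{\nu'}(H;H)$ Bochner norm.

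There is no real obstacle here: the entire content of the corollary is carried by \cref{thm:upper-expect}, and Markov's inequality merely trades the mean-square rate for a contraction rate at the (harmless) cost of the arbitrary inflation factor $M_N$. The only points meriting a word of care are the finiteness of the conditional second moment and the legitimacy of the Tonelli exchange, and both are already secured by the standing assumptions discussed under \ref{item:as_exponent}.
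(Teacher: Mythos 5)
Your argument is correct and is precisely the one the paper gives in the paragraph preceding the corollary: apply Markov's (equivalently, Chebyshev's) inequality to the conditional second moment to bound the posterior mass of the complement ball, take expectation over $D_N$, and conclude from \cref{thm:upper-expect} that the resulting bound is $O(M_N^{-2})\to 0$. The extra care you take regarding finiteness of the conditional second moment and the Tonelli exchange is sound but already implicit in the paper's standing assumptions.
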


We deduce that the inverse problem \cref{eqn:intro_ideas_ip_operator} for linear operator learning is \emph{moderately ill-posed} \cite[sect. 4]{agapiou2018posterior} under \cref{assump:theory_assumptions_main,assump:theory_expectation_train} because $ \ep_N^2 $ follows a power law \cref{eqn:upper-expect1}. Since $ \post $ is Gaussian, \cref{eqn:error_bochner_intro} admits a decomposition into three terms: the squared estimation bias, estimation variance, and posterior spread (i.e., the trace of $ \Sigma^{(N)} $) \cite[sect. 1.1]{agapiou2018posterior}. Inspection of the proof of \cref{thm:upper-expect} shows that the second term on the right of \cref{eqn:upper-expect1} is the contribution from the squared estimation bias, while the first is from both the estimation variance and posterior spread \cref{eqn:proofs_iall}. Interpretations are similar for the remaining theorems.

\subsection{High probability bounds}\label{sec:theory_probability}
A stronger assumption on the input data distribution is needed to obtain concentration bounds. It includes Gaussian measures as a special case.
\begin{assumption}[high probability: training data]\label{assump:theory_probability_train}
	The training data distribution \( \nu \) is a centered Borel probability measure on \( H \) with KL expansion \( x=\sum_{k=1}^\infty \lambda_k \zeta_k\phi_k \sim\nu\). The eigenvalues \( \{\lambda_k^2\} \) of \( \Cov[\nu]=\Lambda \) are ordered to be nonincreasing, and the zero mean and unit variance r.v.s \( \{\zeta_k\} \) are independent \( \sigma_{\nu}^2 \)-subgaussian for some absolute constant \( \sigma_{\nu}\geq 1 \). In particular, \( \nu \) is a strict subgaussian measure with trace-class covariance operator proxy \( \Lambda\).\footnote{A centered real-valued r.v. \( Z \) is \( \sigma^2 \)-subgaussian, denoted by \( Z\in\SG{\sigma^2} \), if \( \E\exp(tZ)\leq \exp(\sigma^2t^2/2) \) for all \( t\in\R \) \cite{wainwright2019high}. On a Hilbert space \( (H,\ip{\cdot}{\cdot}) \), a centered \( H \)-valued r.v. \( x \) is subgaussian with respect to trace-class covariance operator proxy \( Q\in\cL(H) \), denoted by \( x\in\SG{Q} \), if there exists \( q\geq 0 \) such that \( \E\exp(\ip{h}{x})\leq \exp(q^2\ip{h}{Qh}/2) \) for all \( h\in H \) \cite{antonini1997subgaussian}. It is strictly subgaussian if \( Q\preccurlyeq c \E[x\otimes x] \) for some \( c>0 \).
	}
\end{assumption}

The next result holds
with high probability over the subgaussian design $ X\sim \nu^{\otimes N} $.
\begin{theorem}[high probability: upper and lower bounds]\label{thm:upper-lower-prob}
	Let the ground truth \( \Ld \), prior \( \mu \) on \( L \), training data distribution \( \nu \), and test data distribution \( \nu' \) satisfy \cref{assump:theory_assumptions_main,assump:theory_probability_train}.  Let \( J_N=J_N(\al,p) \) and \( \rho_N=\rho_N(\al,\al',p) \) in \cref{eqn:theory_expectation_rho_J} with \( \al\), \(\al' \), and \( p \) as in \cref{assump:theory_assumptions_main}. Denote by \( \post \) the posterior distribution \cref{eqn:setup_bayes_posterior_notation} for \( L \) arising from the observed data \( D_N= (X,Y) \) in \cref{eqn:intro_ideas_ip_operator}. There is an absolute constant \( c_1>0 \) for which the following holds. Suppose $ \delta\in(0,1\mmin c_1\sigma_{\nu}^2) $. Define $ \Ndm\defeq (1-\delta)N $ for \( N\in\N \). Then there exist two constants $ c_2=c_2(\delta)>0 $ and $ c_3\in(0,1/(2c_1^2\sigma_{\nu}^4)) $ such that, as $ N\to\infty $, it holds that
	\begin{equation}\label{eqn:upper-prob1}
	\E^{Y\condbar X}\E^{L^{(N)}\sim \post}\norm[\big]{\Ld-L^{(N)}}^{2}_{L_{\nu'}^{2}(H;H)}
	=
	O\bigl((\tfrac{2\delta}{1-\delta}\one_{\{\al'<\al+1/2\}}+1)\rho_{\Ndm}\bigr)
	+
	o\bigl(\Ndm^{-\left(\frac{\al'+s}{\al+p}\right)}\bigr)
	\end{equation}
	with probability at least $ 1-c_2\exp(-c_3N\delta^2) $ over $ X\sim\nu^{\otimes N} $ and
	\begin{equation}\label{eqn:lower-prob}
	\E^{Y\condbar X}\E^{L^{(N)}\sim \post}\norm[\big]{\Ld-L^{(N)}}^{2}_{L_{\nu'}^{2}(H;H)}
	=
	\Omega\Bigl(
	\rho_N
	+
	\sum\nolimits_{j>J_N}j^{-2\al'}\abs{\ld_{j}}^2
	\Bigr)
	\end{equation}
	with probability at least $ {1-c_2\exp(-c_3N\delta^2)} $ over $ X\sim\nu^{\otimes N} $. Both assertions remain valid if the inner expectations are removed and \( L^{(N)} \) is replaced by the posterior mean \( \bar{L}^{(N)} \), and, for the lower bound only, the term \( \rho_N \) is multiplied by $ 1-\delta $ on the right hand side of \cref{eqn:lower-prob}.
\end{theorem}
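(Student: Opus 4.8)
\emph{Proof sketch.} The plan is to freeze the noise first, reducing the conditional test error to an explicit monotone function of the empirical second moments $\avgn{g_j}{g_j}=\frac1N\sum_{n=1}^N\langle\varphi_j,x_n\rangle^2$, and then to control those moments uniformly over a large enough range of $j$ on a high-probability event for $X$, after which the bounds reduce to the deterministic sequence-space estimates already used to prove \cref{thm:upper-expect,thm:lower-expect}. For the first step, using \cref{fact:posterior_sequence}, the coordinate form \cref{eqn:bochner_coord}, and the decomposition $\avgn{y_j}{g_j}=\ld_j\avgn{g_j}{g_j}+\gamma\avgn{\xi_j}{g_j}$ together with $\E^{Y\condbar X}(\avgn{\xi_j}{g_j})^2=N^{-1}\avgn{g_j}{g_j}$ (since $\Xi\perp X$), the Gaussianity of $\postseq$ gives
\[
\E^{Y\condbar X}\E^{L^{(N)}\sim\post}\norm[\big]{\Ld-L^{(N)}}_{L^2_{\nu'}(H;H)}^2
=\sum_{j=1}^\infty\vartheta_j'^2\left[
\frac{\ld_j^2+N\gamma^{-2}\sigma_j^4\,\avgn{g_j}{g_j}}{(1+N\gamma^{-2}\sigma_j^2\,\avgn{g_j}{g_j})^2}
+\frac{\sigma_j^2}{1+N\gamma^{-2}\sigma_j^2\,\avgn{g_j}{g_j}}\right],
\]
with the last (posterior-spread) summand dropped for $\bar L^{(N)}$. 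Each term depends on $X$ only through $\avgn{g_j}{g_j}$, decreasingly in the denominator and increasingly in the numerator.

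For the second step, write $g_{jn}=\vartheta_j\sum_k w_{jk}\zeta_{kn}$ with $w_{jk}=\lambda_k\langle\varphi_j,\phi_k\rangle/\vartheta_j$ and $\sum_k w_{jk}^2=1$. Under \cref{assump:theory_probability_train} the variable $\sum_k w_{jk}\zeta_{kn}$ is $O(\sigma_\nu^2)$-subgaussian with unit variance, so $(\sum_k w_{jk}\zeta_{kn})^2-1$ is centered subexponential with $\psi_1$-norm $\le c_1\sigma_\nu^2$, \emph{uniformly in $j$}. Although $\{g_{jn}\}_j$ are correlated across $j$, for each fixed $j$ the summands of $\avgn{g_j}{g_j}$ are i.i.d.\ in $n$, so Bernstein's inequality yields, for $\delta\in(0,c_1\sigma_\nu^2)$, $\P(\abs{\avgn{g_j}{g_j}/\vartheta_j^2-1}\ge\delta)\le c_2(\delta)\exp(-c_3N\delta^2)$ with $c_3\lesssim (c_1\sigma_\nu^2)^{-2}$, and also $\P(\avgn{g_j}{g_j}\ge 2\vartheta_j^2)\le 2\exp(-c_0N)$, for every $j$; these are the constants appearing in the statement. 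Since $\al\mmin\al'+(p-1/2)>0$ forces $\al+p>1/2$, the cutoff $J_N=J_N(\al,p)$ and more generally $N$ are only polynomially large, so a union bound over $j\le N$, together with a standard concentration of $N^{-1}\sum_n\norm{x_n}^2$ around $\tr{\Lambda}$, defines a good event $\cA_\delta$ of probability at least $1-c_2\exp(-c_3N\delta^2)$ (after absorbing the polynomial factor into $c_3$) on which $\avgn{g_j}{g_j}\in[(1-\delta)\vartheta_j^2,2\vartheta_j^2]$ for all $j\le N$.

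On $\cA_\delta$ the upper bound follows by sandwiching: for $j\le J_N$, bounding $\avgn{g_j}{g_j}$ below by $(1-\delta)\vartheta_j^2$ in the denominators and above by $2\vartheta_j^2$ in the numerator turns each summand into the deterministic quantity bounded in the proof of \cref{thm:upper-expect}, but with every factor $N\gamma^{-2}\sigma_j^2\vartheta_j^2$ replaced by $\Ndm\gamma^{-2}\sigma_j^2\vartheta_j^2$ with $\Ndm=(1-\delta)N$; this produces $O(\rho_{\Ndm})+o(\Ndm^{-(\al'+s)/(\al+p)})$, and the prefactor $\tfrac{2\delta}{1-\delta}\one_{\{\al'<\al+1/2\}}+1$ records the $(1-\delta)^{-e}$-type loss of $\rho_{\Ndm}$ relative to $\rho_N$ when $\al'<\al+1/2$. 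For $J_N<j\le N$ the denominators exceed $1$, so the summands are $\lesssim\vartheta_j'^2\ld_j^2+N\gamma^{-2}\vartheta_j'^2\sigma_j^4\vartheta_j^2+\vartheta_j'^2\sigma_j^2$ on $\cA_\delta$, whose sum is $O(\rho_N)+o(N^{-(\al'+s)/(\al+p)})$ by the same algebra; and for $j>N$ one uses $\sum_{j>N}\vartheta_j'^2\sigma_j^4 g_{jn}^2\le\vartheta_N'^2\sigma_N^4\norm{x_n}^2$ with the $\norm{x_n}^2$-control in $\cA_\delta$ to see that range is negligible. For the lower bound, on $\cA_\delta$ and at indices $j\asymp J_N$ the denominator is $\Theta(1+N\gamma^{-2}\sigma_j^2\vartheta_j^2)$, so keeping only the posterior-spread (resp.\ estimation-variance) terms there already yields $\Omega(\rho_N)$ (resp.\ $\Omega((1-\delta)\rho_N)$) exactly as in \cref{thm:lower-expect}. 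For the tail series, bound the error below by $\sum_{j>J_N}\vartheta_j'^2\ld_j^2(1+N\gamma^{-2}\sigma_j^2\avgn{g_j}{g_j})^{-2}$, note $N\gamma^{-2}\sigma_j^2\vartheta_j^2\asymp Nj^{-2(\al+p)}\lesssim 1$ for $j>J_N$, and restrict the sum to the indices where $\avgn{g_j}{g_j}\le 2\vartheta_j^2$; the discarded mass $\sum_{j>J_N}\vartheta_j'^2\ld_j^2\,\one_{\{\avgn{g_j}{g_j}>2\vartheta_j^2\}}$ has expectation at most $2e^{-c_0N}\sum_{j>J_N}\vartheta_j'^2\ld_j^2$, so by Markov's inequality it is at most half the total outside a set of probability $\le 4e^{-c_0N}$, leaving $\Omega\bigl(\sum_{j>J_N}j^{-2\al'}\ld_j^2\bigr)$ on the good event.

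The main obstacle is that a genuinely uniform control $\sup_j\avgn{g_j}{g_j}/\vartheta_j^2\lesssim 1$ is \emph{false} — already when $\Lambda$ and $\Ld$ commute this supremum is a.s.\ infinite by an independence/Borel--Cantelli argument — so the proof must be organized around a two-regime split: for the polynomially many ``head'' indices $j\le N$ a union bound is affordable and gives the two-sided bound $\avgn{g_j}{g_j}\asymp\vartheta_j^2$ needed near the cutoff, while for the infinitely many ``tail'' indices only a crude one-sided estimate is used, and the key point is that each bad tail index occurs with probability exponentially small in $N$, so a Markov bound on the aggregate bad contribution still produces an exponentially small failure probability. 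The remaining work is purely bookkeeping: tracking the $\delta$-dependence through the sandwich so that the upper bound comes out in terms of $\rho_{\Ndm}$ with the stated prefactor and the lower bound retains the $(1-\delta)$ factor on $\rho_N$ for the posterior mean.
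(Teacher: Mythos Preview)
Your overall strategy—freeze $X$, reduce to \cref{eqn:proofs_iall}, trap each $\avgn{g_j}{g_j}/\vartheta_j^2$ in $[1-\delta,1+\delta]$ on a high-probability event via the subexponential tail, and then invoke the deterministic series estimates from \cref{lem:knapik_1,lem:knapik_2}—is exactly the paper's, and your identification of the obstacle (no uniform-in-$j$ control of $Z_j^{(N)}$) is right. The paper, however, organizes the two regimes differently: it union-bounds only over the $O(\Ndm^{1/u})$ ``head'' indices $j\le \Ndm^{1/u}$ (\cref{lem:subexp_bound_dep}) and, for the \emph{entire} tail $j>\Ndm^{1/u}$ in $\cI_2$, uses a one-shot series-level concentration (\cref{lem:fullseriesbound_dep}): a generalized H\"older inequality plus Fatou passes the subexponential MGF bound to the infinite weighted sum, yielding $\sum_{j}w_jZ_j^{(N)}\le(1+\delta)\sum_j w_j$ with probability $1-2\exp(-c_3N\delta^2)$. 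This is where the $(1+\delta)/(1-\delta)=1+\tfrac{2\delta}{1-\delta}$ prefactor actually originates, not from a $\rho_{\Ndm}$-versus-$\rho_N$ comparison.

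Your alternative—union bound to $j\le N$ and handle $j>N$ by $\sum_{j>N}\vartheta_j'^2\sigma_j^4 g_{jn}^2\le \vartheta_N'^2\sigma_N^4\norm{x_n}^2$—has a gap: that inequality needs $j\mapsto\vartheta_j'^2\sigma_j^4\asymp j^{-2(\al'+2p)}$ to be nonincreasing, and even when it is, the resulting bound $N^{1-2(\al'+2p)}$ is not dominated by $\rho_N$ in general (e.g.\ $\al=2$, $\al'=1$, $p=-1/4$ satisfies \ref{item:as_exponent} but gives an $O(1)$ contribution while $\rho_N\asymp N^{-1/7}$). The easy fix is to note instead that $\cI_2\le\cI_3$ termwise, so $\cI_2^{>N}\le\sum_{j>N}\vartheta_j'^2\sigma_j^2\asymp N^{-(2\al'+2p-1)}$, which \emph{is} $\lesssim\rho_N$ under \ref{item:as_exponent}; alternatively, adopt the paper's \cref{lem:fullseriesbound_dep} and avoid the three-way split altogether.

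For the lower bound tail $\cI_1^{>}$ your Markov-on-bad-mass argument is a clean, valid way to get $\Omega\bigl(\sum_{j>J_N}j^{-2\al'}\abs{\ld_j}^2\bigr)$; the paper is terse here, citing \cref{lem:subexp_bound_dep} and \cref{eqn:knapik_1p12}.
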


We explicitly see that the probability of failure for \cref{thm:upper-lower-prob} is exponentially small in the sample size. The implications of this theorem are the same as those of \cref{thm:upper-expect}. The corresponding lower bounds are analogous to the in-expectation results from \cref{thm:lower-expect}.

\subsection{Excess risk}\label{sec:theory_excess_risk}
In the previous two subsections, we bounded the test error \cref{eqn:error_bochner_intro} from above and below. It follows that corresponding bounds for the excess risk \( \cE_N \) \cref{eqn:excess_risk} may be obtained by specializing to the in-distribution case \( \nu'=\nu \) for the posterior mean (so $ \al'=\al $).
\begin{corollary}[expected excess risk: upper and lower bounds]\label{cor:excess_risk_expect}
	Let the hypotheses of \cref{thm:upper-expect,thm:lower-expect} be satisfied. Then the expected excess risk \( \E^{D_N}\cE_N \) satisfies the bounds
	\begin{equation}\label{eqn:excess_risk_expect_upper}
	\E^{D_N}\E^{x\sim\nu}\norm{\Ld x-\bar{L}^{(N)}x}^2
	=
	O\bigl(N^{-\left(\frac{\al+p-1/2}{\al+p}\right)}\bigr)
	+
	o\bigl(N^{-\left(\frac{\al+s}{\al+ p}\right)}\bigr)
	\qas N\to\infty\,,
	\end{equation}
	and for any positive sequence \( \{\tau_n\} \) such that $ \tau_{n}\to 0 $ and \( n\tau_n\to\infty \) as \( n\to\infty \), it holds that
	\begin{equation}\label{eqn:excess_risk_expect_lower}
	\E^{D_N}\E^{x\sim\nu}\norm{\Ld x-\bar{L}^{(N)}x}^2
	=
	\Omega\Bigl(
	\tau_N N^{-\left(\frac{\al+p-1/2}{\al+p}\right)}
	+
	\sum\nolimits_{j>J_N}j^{-2\al}\abs{\ld_{j}}^2
	\Bigr)
	\ \ \text{as}\ \ N\to\infty\,.
	\end{equation}
\end{corollary}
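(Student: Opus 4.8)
The plan is to obtain \cref{cor:excess_risk_expect} as an immediate specialization of \cref{thm:upper-expect,thm:lower-expect} to the in-distribution regime $\nu' = \nu$, applied to the posterior mean estimator $\bar{L}^{(N)}$. The starting observation is \cref{def:excess_risk}: the excess risk satisfies $\cE_N = \E^{x\sim\nu}\norm{\Ld x - \bar{L}^{(N)}x}^2 = \norm[\big]{\Ld - \bar{L}^{(N)}}_{L^2_{\nu}(H;H)}^2$, so that $\E^{D_N}\cE_N$ is exactly the posterior-mean test error of \cref{def:test_error2} with the test distribution $\nu'$ taken to be the training distribution $\nu$ (the ``in-distribution'' case of \cref{sec:setup_bayes_testerror}). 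Hence it suffices to invoke the bounds already proved in \cref{thm:upper-expect,thm:lower-expect} with this choice of $\nu'$.

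First I would verify that the hypotheses of \cref{thm:upper-expect,thm:lower-expect} are preserved under $\nu' = \nu$. The only parts of \cref{assump:theory_assumptions_main} referencing $\nu'$ are the test-smoothness condition \cref{eqn:theory_assumptions_test} and the exponent condition \ref{item:as_exponent}. With $\Lambda' = \Lambda$ one has $\vartheta_j'^2 = \ip{\varphi_j}{\Lambda\varphi_j} = \Theta(j^{-2\al})$ by \cref{eqn:theory_assumptions_train}, so \cref{eqn:theory_assumptions_test} holds with $\al' = \al$; and $(\al\mmin\al') + s = \al + s > 0$ together with $(\al\mmin\al') + (p - 1/2) = \al + p - 1/2 > 0$ follows from \ref{item:as_exponent} in the original hypotheses, since there $\al\mmin\al' \le \al$ already forces $\al + s > 0$ and $\al + p - 1/2 > 0$. \cref{assump:theory_expectation_train} is untouched, as it concerns only $\nu$.

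Next I would evaluate the rate sequences of \cref{eqn:theory_expectation_rho_J} at $\al' = \al$. Since $\al' = \al < \al + 1/2$, the first branch applies, yielding $\rho_N(\al,\al,p) = N^{-\bigl(1 - \frac{\al + 1/2 - \al}{\al+p}\bigr)} = N^{-\bigl(\frac{\al+p-1/2}{\al+p}\bigr)}$, while $N^{-(\al'+s)/(\al+p)} = N^{-(\al+s)/(\al+p)}$ and $\sum_{j>J_N} j^{-2\al'}\abs{\ld_j}^2 = \sum_{j>J_N} j^{-2\al}\abs{\ld_j}^2$, with $J_N = J_N(\al,p)$ unchanged. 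Substituting these into the posterior-mean version of \cref{eqn:upper-expect1} (which \cref{thm:upper-expect} explicitly covers) gives \cref{eqn:excess_risk_expect_upper}, and substituting into the posterior-mean version of \cref{eqn:lower-expect} (which \cref{thm:lower-expect} states for $\bar{L}^{(N)}$, retaining the $\{\tau_n\}$ factor) gives \cref{eqn:excess_risk_expect_lower}.

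Since this is a corollary, there is essentially no obstacle beyond bookkeeping; the only points warranting care are confirming that the in-distribution substitution $\al'=\al$ still lies in the admissible range of \ref{item:as_exponent}, and correctly identifying which branch of the piecewise definition of $\rho_N$ is active when $\al' = \al$ (the first one, as $\al < \al + 1/2$).
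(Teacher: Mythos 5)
Your proposal is correct and takes exactly the same route as the paper: the paper proves \cref{cor:excess_risk_expect} by noting that $\E^{D_N}\cE_N$ is the posterior-mean test error in the in-distribution case $\nu'=\nu$ (so $\al'=\al$), and then substituting $\al'=\al$ into the posterior-mean versions of \cref{thm:upper-expect,thm:lower-expect} and the rate formulas in \cref{eqn:theory_expectation_rho_J}. Your added verification that $\al'=\al$ stays within the admissible range of \ref{item:as_exponent}, and that the first branch of $\rho_N$ is active since $\al<\al+1/2$, are the only points requiring attention, and you handle both correctly.
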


\Cref{cor:excess_risk_expect} is proved as a consequence of \cref{thm:upper-expect,thm:lower-expect}. A similar result may be established for $ \E^{Y\condbar X}\cE_{N} $ by using \cref{thm:upper-lower-prob}. We omit the details for brevity. It is also interesting that \emph{fast rates} for the excess risk (i.e., faster than $ N^{-1/2} $ \cite{mathieu2021excess}) are attained by the posterior mean eigenvalue estimator in certain regimes. The usual statistical learning techniques based on bounding suprema of empirical processes typically yield slow $ N^{-1/2} $ rates or worse \cite{tabaghi2019learning}. Our results are sharper because we use explicit diagonal calculations.

\subsection{Generalization gap}\label{sec:theory_gen_gap}
Last, we estimate the generalization gap \cref{eqn:gen_gap} in $ L^1_{\P}(\varOmega;\R) $.
\begin{theorem}[expected generalization gap: upper and lower bounds]\label{thm:gen_gap_expect}
	Let the hypotheses of \cref{thm:upper-expect} be satisfied. Then for \( \cG_N \) as in \cref{eqn:gen_gap}, it holds that
	\begin{equation}\label{eqn:thm_gen_gap-expect}
	\EGG=
	O\bigl(N^{-\left(\frac{1}{2}\mmin\frac{\al+p-1/2}{\al+p}\right)}\bigr)\qas N\to\infty\, .
	\end{equation}
	Additionally, for any positive sequence \( \{\tau_n\} \) such that $ \tau_{n}\to 0 $ and \( n^{1/2}\tau_n\to\infty \) as \( n\to\infty \),
	\begin{equation}\label{eqn:thm_gen_gap-expect-lower}
	\EGG=
	\Omega\bigl(\tau_NN^{-\left(\frac{\al+p-1/2}{\al+p}\right)}\bigr)
	\qas N\to\infty
	\end{equation}
	if $ (\al+s)/(\al+p)\geq 2 $. Otherwise, the previous assertion \cref{eqn:thm_gen_gap-expect-lower} remains valid provided that $ p<1+\al+2s $ 
	and $\tau_n\gg n^{-1/2} \mmax n^{-(1+\al+2s-p)/(2\al+2p)} $ as $ n\to\infty $.
\end{theorem}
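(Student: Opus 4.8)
\emph{Proof plan.} The plan is to begin from the decomposition
\begin{equation}\label{eqn:j1j2j3}
\cG_N = J_1 + J_2 + J_3\,,
\end{equation}
which is obtained from \cref{eqn:risk_emp} by substituting $y_n=\Ld x_n+\gamma\xi_n$ and completing the square, so that the formally infinite terms $\tfrac12\norm{y_n}^2$ cancel; here, in the notation $g_{jn}=\ip{\varphi_j}{x_n}$ and $\vartheta_j^2=\ip{\varphi_j}{\Lambda\varphi_j}$ of \cref{eqn:setup_bayes_gjn},
\begin{align*}
J_1 &\defeq \tfrac12\sum\nolimits_{j=1}^{\infty}\bigl(\ld_j-\bar{l}_j^{(N)}\bigr)^2\bigl(\vartheta_j^2-\avgn{g_j}{g_j}\bigr)\,,\qquad
J_2 \defeq \tfrac12\sum\nolimits_{j=1}^{\infty}\abs{\ld_j}^2\bigl(\avgn{g_j}{g_j}-\vartheta_j^2\bigr)\,,\\
J_3 &\defeq \tfrac{\gamma}{N}\sum\nolimits_{n=1}^{N}\ip{\xi_n}{\bar{L}^{(N)}x_n}=\gamma\sum\nolimits_{j=1}^{\infty}\bar{l}_j^{(N)}\,\avgn{\xi_j}{g_j}\,.
\end{align*}
For the upper bound I would use $\EGG\le\E^{D_N}\abs{J_1}+\E^{D_N}\abs{J_2}+\E^{D_N}\abs{J_3}$, and for the lower bound the inequality $\EGG\ge\abs{\E^{D_N}\cG_N}$ together with $\E^{D_N}J_2=0$.

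The term $J_2$ is a centered empirical average of the i.i.d.\ finite-variance random variables $\norm{\Ld x_n}^2$ (finiteness of $\E^{x\sim\nu}\norm{\Ld x}^4$ follows from \ref{item:as_exponent} and the fourth-moment hypothesis of \cref{assump:theory_expectation_train}), so $\E^{D_N}\abs{J_2}=O(N^{-1/2})$. For $J_3$, I would insert $y_{jn}=g_{jn}\ld_j+\gamma\xi_{jn}$ into \cref{eqn:posterior_sequence} and condition on $X$: since $\Xi$ is white noise independent of $X$, $\E[\avgn{\xi_j}{g_j}\mid X]=0$ and $\E[(\avgn{\xi_j}{g_j})^2\mid X]=N^{-1}\avgn{g_j}{g_j}$, so the part of $J_3$ linear in $\Xi$ vanishes in conditional mean and
\begin{equation*}
\E[J_3\mid X]=\sum\nolimits_{j=1}^{\infty}\frac{\sigma_j^2\,\avgn{g_j}{g_j}}{1+N\gamma^{-2}\sigma_j^2\,\avgn{g_j}{g_j}}\,.
\end{equation*}
Using $\sigma_j^2=\Theta(j^{-2p})$, the moment and inverse-moment controls of \cref{assump:theory_expectation_train} on $\avgn{g_j}{g_j}$ around $\vartheta_j^2=\Theta(j^{-2\al})$, and splitting at $j=J_N(\al,p)$ (each head summand is $\asymp N^{-1}$, and $\sum_{j>J_N}j^{-2\al-2p}\asymp J_N^{1-2\al-2p}$, both of order $N^{-(\al+p-1/2)/(\al+p)}$, which is legitimate because $\al+p>1/2$), I get $\E^{D_N}\E[J_3\mid X]=\Theta(N^{-(\al+p-1/2)/(\al+p)})>0$; the remainder $J_3-\E[J_3\mid X]$ is conditionally centered and linear-plus-quadratic in $\Xi$ with explicit conditional moments, so Cauchy--Schwarz together with the moment bounds on $\bar{l}_j^{(N)}$ from the proof of \cref{thm:upper-expect} gives $\E^{D_N}\abs{J_3-\E[J_3\mid X]}=O(N^{-1/2})$. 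For $J_1$ I would again split at $J_N$: for $j\le J_N$, Cauchy--Schwarz with the second- and fourth-moment bounds on $\ld_j-\bar{l}_j^{(N)}$ from the proof of \cref{thm:upper-expect} and the elementary estimate $\E(\vartheta_j^2-\avgn{g_j}{g_j})^2=O(\vartheta_j^4/N)$ bounds the head by $O(J_N N^{-3/2})=o(N^{-1/2})$, while for $j>J_N$ the posterior mean is negligible and using $\E^{D_N}\abs{\vartheta_j^2-\avgn{g_j}{g_j}}=O(\vartheta_j^2N^{-1/2})$ with $\sum_{j>J_N}\abs{\ld_j}^2\vartheta_j^2\lesssim J_N^{-2\al-2s}\norm{\ld}_{\cH^s}^2$ (valid since $\al+s>0$) bounds the tail by $O(N^{-1/2-(\al+s)/(\al+p)})$. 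All three contributions are of smaller order than the rate in \cref{eqn:thm_gen_gap-expect}, whose proof is then complete.

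For the lower bound \cref{eqn:thm_gen_gap-expect-lower}, I would use $\EGG\ge\abs{\E^{D_N}\cG_N}=\abs{\E^{D_N}J_1+\E^{D_N}J_3}$ since $\E^{D_N}J_2=0$. The explicit formula for $\E[J_3\mid X]$ yields $\E^{D_N}J_3=\Theta(N^{-(\al+p-1/2)/(\al+p)})$, and the $J_1$-estimates above give $\abs{\E^{D_N}J_1}=O(J_NN^{-3/2}+N^{-1/2-(\al+s)/(\al+p)})$. Writing $J_N\asymp N^{1/(2\al+2p)}$ one has $J_NN^{-3/2}=N^{-(\al+p-1/2)/(\al+p)}\cdot N^{-1/2}$ and $N^{-1/2-(\al+s)/(\al+p)}=N^{-(\al+p-1/2)/(\al+p)}\cdot N^{-(1+\al+2s-p)/(2\al+2p)}$, so $\abs{\E^{D_N}J_1}$ is $o(\tau_N N^{-(\al+p-1/2)/(\al+p)})$ precisely under the stated hypotheses: $n^{1/2}\tau_n\to\infty$ absorbs the first factor, and when $(\al+s)/(\al+p)<2$ the additional requirements $p<1+\al+2s$ and $\tau_n\gg n^{-(1+\al+2s-p)/(2\al+2p)}$ absorb the second, whereas when $(\al+s)/(\al+p)\ge2$ one checks $p<1/2+s$, hence $(1+\al+2s-p)/(2\al+2p)\ge1/2$, so $n^{1/2}\tau_n\to\infty$ alone suffices. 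Consequently $\abs{\E^{D_N}J_1+\E^{D_N}J_3}=\Omega(\tau_N N^{-(\al+p-1/2)/(\al+p)})$, which is \cref{eqn:thm_gen_gap-expect-lower}.

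The main obstacle is the sharp control of $J_1$ and of the $\Xi$-fluctuation of $J_3$. In contrast with the conjugate diagonal calculation behind \cref{thm:upper-expect}, the posterior mean $\bar{l}_j^{(N)}$ entering these terms is correlated with the very design averages $\avgn{g_j}{g_j}$ and $\avgn{\xi_j}{g_j}$ that multiply it, so every estimate must pass through joint fourth- and inverse-moment bounds rather than separate ones; moreover the inverse-moment hypothesis in \cref{assump:theory_expectation_train} is only a $\limsup$ statement, so the sample-size threshold beyond which these bounds take effect can drift with the index $j$. Keeping the resulting head-of-$J_1$ and fluctuation contributions strictly below the leading $N^{-(\al+p-1/2)/(\al+p)}$ scale of $\E^{D_N}J_3$ is exactly what forces the slowly vanishing factor $\tau_N$ and the side conditions relating $p$, $\al$, and $s$ in \cref{eqn:thm_gen_gap-expect-lower}. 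The remaining steps, namely the treatment of $J_2$ and of the head of $J_1$, are routine.
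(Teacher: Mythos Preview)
Your decomposition $\cG_N=J_1+J_2+J_3$ and the overall strategy match the paper's exactly. Two points deserve correction. First, the proof of \cref{thm:upper-expect} does \emph{not} establish fourth-moment bounds on $\ld_j-\bar l_j^{(N)}$; the paper instead expands $(\ld_j-\bar l_j^{(N)})^2$ explicitly via \cref{eqn:posterior_sequence} into a bias-squared and a variance piece, splitting $\E\abs{J_1}$ into two terms (its $G_1$ and $G_2$) each of which is controlled by Cauchy--Schwarz using only the second and inverse moments of $\avgn{g_j}{g_j}$ already supplied by \cref{assump:theory_expectation_train}. This also shows that your head-of-$J_1$ bound $O(J_NN^{-3/2})$ is incomplete: it captures only the variance contribution (the paper's $G_2^{\le}$), while the bias contribution $G_1^{\le}$ is of order $N^{-1/2}\cdot o(N^{-(\al+s)/(\al+p)})$ and can dominate---this is precisely the $N^{-1/2-(\al+s)/(\al+p)}$ term you correctly invoke in the lower-bound discussion but misattribute to the tail alone. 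Second, the paper does \emph{not} obtain a sharp $\Theta$ for $\E^{D_N}J_3$; it reuses the $\tau_N$-conditioning device from the lower bound on $\E\cI_2$ in the proof of \cref{thm:lower-expect} to get $\abs{\E\cJ_3}\gtrsim\tau_N N^{-(\al+p-1/2)/(\al+p)}$, and this is the source of the $\tau_N$ in the theorem statement. Your sharper $\Theta$ claim is in fact defensible (for instance via $1=(\E Z)^2\le\E[Z/(1+aZ)]\cdot\E[Z(1+aZ)]$ with $Z=\avgn{g_j}{g_j}/\vartheta_j^2$ and the fourth-moment bound in \cref{assump:theory_expectation_train}), but you have not spelled it out and it is not the paper's route.
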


We see that the expected generalization gap decays at least as fast as the standard Monte Carlo rate $ N^{-1/2} $ if $\alpha+p \ge 1$. Otherwise, it decays at a slower rate that is arbitrarily slow as $\alpha+p$ approaches $1/2$ from above. The lower bound only matches the latter contribution.

\section{Numerical studies}\label{sec:numerics}
We now instantiate our operator learning framework numerically, both according to the theory (\cref{sec:numerics_within}) and beyond (\cref{sec:numerics_beyond}). For clarity, we only implement the posterior mean estimator $ \bar{L}^{(N)} $.
Our conceptually infinite-dimensional problem must be carefully discretized to avoid obscuring the theoretical infinite-dimensional behavior \cite[sect. 1.2]{agapiou2014analysis}. We use spectral truncation \cite{agapiou2014analysis,agapiou2021designing} to finite-dimensionalize infinite sequence spaces. For $ v=\{v_j\}\in\R^{\infty} $, its truncation is $ v^{(J)}\defeq\{v_j\}_{j\leq J}\in\R^{J} $ for $ J\in\N $ ``Fourier'' modes. We use the relative expected squared $ L_{\nu'}^2 $ Bochner norm as a numerical error metric, given by
\begin{equation}\label{eqn:relative_error}
\E^{D_N}\E^{x\sim\nu'}\norm{\Ld x-\bar{L}^{(N)}x}^{2}/\E^{x\sim\nu'}\norm{\Ld x}^{2}
=
\E^{D_N}\sum\nolimits_{j=1}^{\infty}\vartheta_j'^{2}\abs[\big]{\ld_j-\bar{l}_{j}^{(N)}}^{2}/\sum\nolimits_{k=1}^{\infty}\vartheta_k'^2\abs{\ld_k}^{2}\, .
\end{equation}

\subsection{Within the theory}\label{sec:numerics_within}
We now confirm the theoretical results of this paper with simulation studies. 
Define $ A\colon \dom(A)\subset H\to H $ by $ h\mapsto Ah\defeq -\lap h $ with domain $ \dom(A)\defeq H_0^1(I;\R)\cap H^{2}(I;\R) $, where $ I\defeq (0,1) $, $ H\defeq L^2(I;\R) $, and $ \lap $ is the Laplacian (i.e., second derivative). We consider truths $ \Ld=A,\id$, and $A^{-1} $ corresponding to unbounded, bounded, and compact self-adjoint operators on $ H $, respectively. The map $ A $ is diagonalized in the orthonormal basis $ \{\varphi_j\} $ of $ H $ given by $ z\mapsto \varphi_j(z)=\sqrt{2}\sin(j\pi z) $. This is the output space basis used henceforth. Then $ \Ld=A,\id$, and $A^{-1} $ have eigenvalue sequences $ \ld=\{(j\pi)^2\}, \{1\}$, and $ \{(j\pi)^{-2}\}\in\cH^{s} $ for any $ s<s^{\star} $, where $ s^{\star}=-5/2, -1/2$, and $ 3/2 $, respectively. These eigenvalues are regularly varying (with \( S\equiv 1 \)) as in \cref{thm:theory_expectation_rvsharp}.

We work in the Gaussian setting of \cref{thm:intro_ideas_thm}. We choose Mat\'ern-like covariances
\begin{equation}\label{eqn:covariance_matern}
\Lambda = \tau_1^{2\al-1}(A + \tau_{1}^{2}\id)^{-\al}\qa \Lambda' = \tau_2^{2\al'-1}(A + \tau_{2}^{2}\id)^{-\al'}
\end{equation}
for $ \nu $ and $ \nu' $. Here $ \{\tau_i\}_{i=1,2} $ are inverse length 
scales. Draws from $\nu$ (resp. $\nu'$) are in $\cH^{s'}$
for all $s'<\al-1/2$ (resp. $s'<\al'-1/2$). Notice that $ \Ld $, \( \Lambda \), and \( \Lambda' \) are simultaneously diagonalizable in $ \{\phi_j\equiv\varphi_j\} $.  The eigenvalues are $ \lambda_j(\Lambda)=\vartheta_j^2=\tau_1^{2\al-1}((j\pi)^2 + \tau_{1}^{2})^{-\al}\asymp j^{-2\al} $ and similarly for $ \lambda_j(\Lambda')=\vartheta_j'^2\asymp j^{-2\al'} $. These satisfy assumption~\ref{item:as_smooth_data}. We directly define the prior covariance $ \Sigma $ in sequence space according to assumption~\ref{item:as_smooth_prior}, choosing $ \sigma_j^2\defeq\tau_3^{2p-1}((j\pi)^2 + \tau_{3}^{2})^{-p}\asymp j^{-2p} $ for \( \tau_3>0 \). We enforce assumption~\ref{item:as_exponent} for the values of $ \al, \al'$, and $p $.

An independent random dataset $ D_N $ (as in \cref{eqn:intro_ideas_ip_diagonal}) is generated for each sample size $ N\in\N $ to construct $ \bar{l}^{(N)} $. For each \( N \), this is repeated $ 250 $, $ 500 $, or $ 1000 $ times for $ \Ld=A, \id$, and $ A^{-1} $, respectively, to approximate the outer expectation in \cref{eqn:relative_error} by sample averages. Convergence rates are produced by linear least square fits to the logarithm of computed errors. We fix the noise scale to be $ \gamma = 10^{-1}, 10^{-3}$, and $ 10^{-5} $ for $ \Ld  = A, \id$, and $ A^{-1} $, respectively.

\subsubsection{In-distribution}\label{sec:numerics_within_in}
We set $ \al=\al'=4.5 $ (in-distribution), $ \tau_1=\tau_2=15 $, \( \tau_3=1 \), and define the prior smoothness $ p=p(\Ld)=1/2+s^{\star}(\Ld)+z $, where $ z=-0.75, 0$, or $ 0.75 $ is a fixed shift to replicate rough, matching, or smooth priors, respectively. Sequences are discretized by keeping up to $ J=2^{16}=65,536 $ Fourier modes. The sample size is $ N\in\{2^{4}, 2^{5},\ldots, 2^{14}\} $. \Cref{tab:ratesPT} empirically verifies our sharp theoretical predictions from \cref{thm:theory_expectation_rvsharp} for \( \ER \). The convergence as \( N \) increases is visualized in \cref{fig:scaled_rates_wt} for the smooth prior case.

\begin{table}[tbhp]
	\centering
	\footnotesize
	\caption{Matching test measure. Theoretical v.s. experimental (in parentheses) convergence rate exponents $ r $ in $ O(N^{-r}) $ of the relative expected squared $ L_{\nu}^{2}(H;H) $ in-distribution error (i.e., the scaled excess risk $ \ER $).
	}
	\label{tab:ratesPT}
	\renewcommand{\arraystretch}{1.2}
	\begin{tabular}{l@{\hspace{0.25mm}}lccc}
		\toprule
		\multicolumn{1}{l}{$ \Ld $} &
		\{\emph{Operator Class}\} &
		\multicolumn{1}{c}{Rough Prior} &
		\multicolumn{1}{c}{Matching Prior} &
		\multicolumn{1}{c}{Smooth Prior} \\
		\midrule
		
		$A$ &\{\emph{Unbounded}\} &  0.714 (0.714) & 0.800 (0.809) & 0.615 (0.616)\\
		
		$\id$ &\{\emph{Bounded}\} &  0.867 (0.865) & 0.889 (0.889) & 0.762 (0.762)\\
		
		$A^{-1}$ &\{\emph{Compact}\} &  0.913 (0.913) & 0.923 (0.920) & 0.828 (0.830)\\
		\bottomrule
	\end{tabular}
\end{table}

Moving on to study the rates of convergence of $ \ER $ and $ \EGG $ for unbounded $ \Ld=A $ in more detail, we now use \emph{$ N $-dependent} spectral truncation. For each $ N $, we only take Fourier modes from the set $ \{j\in\N\colon  j\leq cJ_N\} $, where $ c>0 $ is a tunable constant and $ J_N\defeq N^{1/(2\al+2p)}\ll N $. This approach is justified because it is more stable numerically and the results in \cref{sec:theory} remain valid with this $ N $-dependent truncation. Contributions from the tail set $ \{j\in\N\colon  j> cJ_N\} $ are of equal order or negligible, asymptotically, relative to those from the truncated set (\cref{app:proofs}). \Cref{fig:sweep_risk_and_gap} shows results with $ N $ up to $ 2^{21} $ and $ c $ such that $ cJ_{2^{21}}\approx2^{14}$ (maximal truncation level). The influence of discretization manifests itself through $ \gamma $. For $ \ER $, the under-smoothing prior region ($ z<0 $) is relatively insensitive to $ \gamma $ and the rate exponents closely match \cref{eqn:excess_risk_expect_upper}. But in the over-smoothing prior region $ z>0 $ for large $ \gamma $, the rates begin to deviate from the theory because large constants mask the theoretical asymptotic behavior in this finite sample regime. Similarly, for finite \( N \), the noise scale can alter the correct behavior of the competing terms in the bound \cref{eqn:thm_gen_gap-expect} for $ \EGG $. For small $ \gamma $,  terms $ O(N^{-1/2}) $ have large hidden constants that obscure terms \( \gg N^{-1/2} \) for small $ z<0 $ (\cref{fig:sweep_gap_lowest}). For large $ \gamma $, this behavior is reversed (\cref{fig:sweep_gap_high}).

\begin{figure}[htbp]%
	\centering
	\subfloat[$ \ER $, $ \gamma^2=10^{-6} $]{\includegraphics[width=0.25\textwidth]{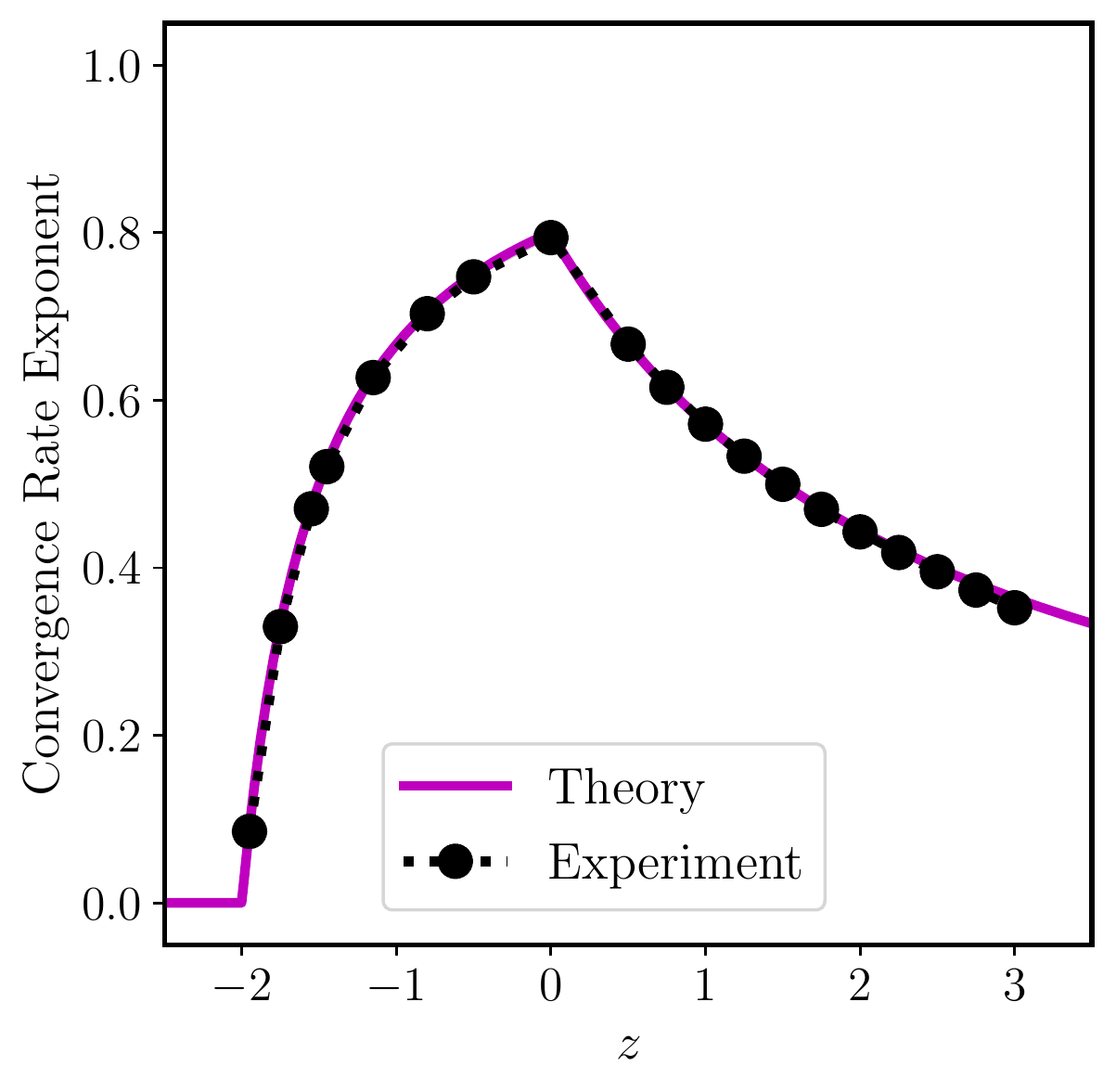}\label{fig:sweep_risk_lowest}}\hfill%
	\subfloat[$ \ER $, $ \gamma^2=25^2 $]{\includegraphics[width=0.237\textwidth]{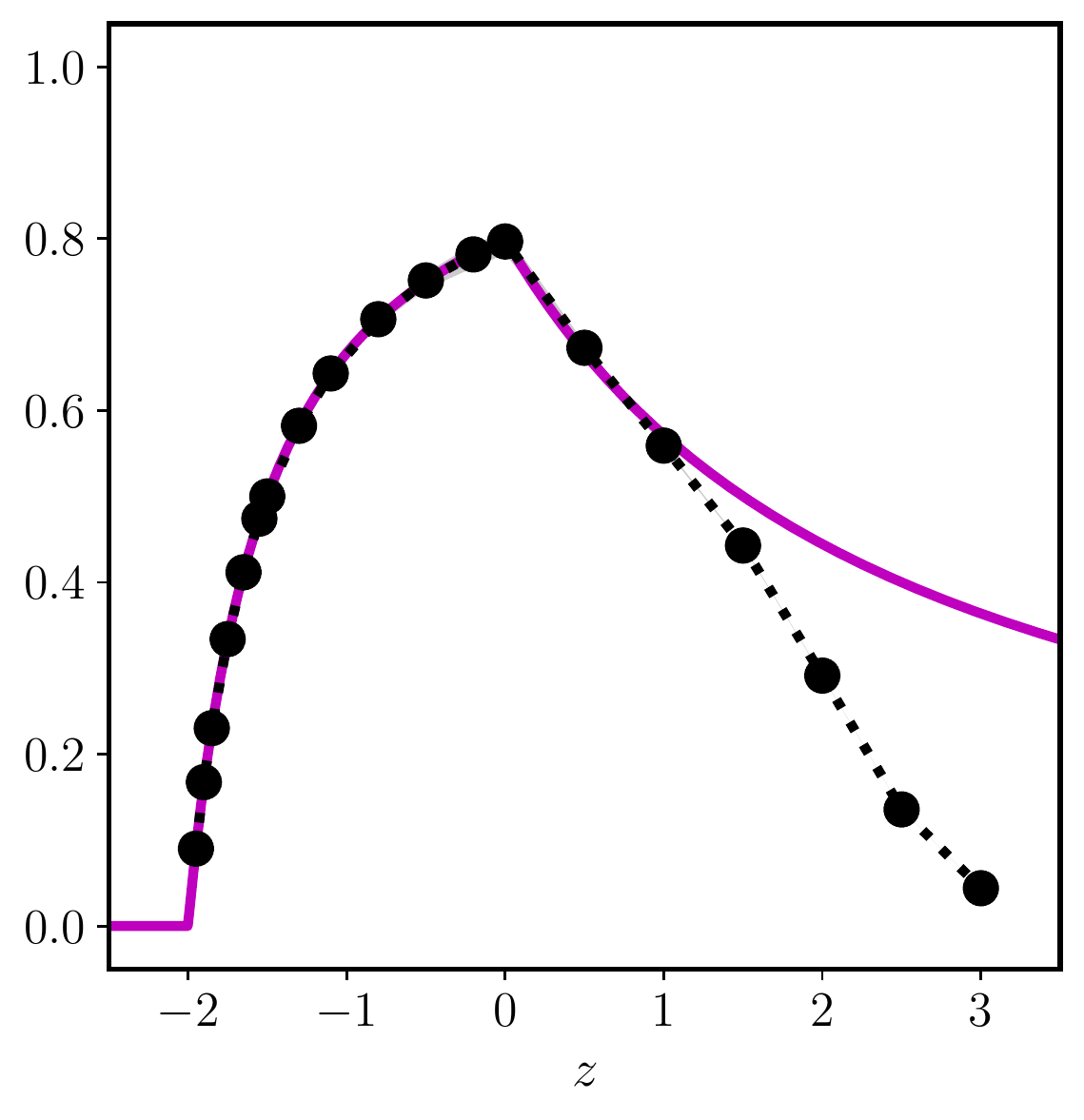}\label{fig:sweep_risk_high}}\hfill%
	\subfloat[$ \EGG $, $ \gamma^2=10^{-6} $]{\includegraphics[width=0.237\textwidth]{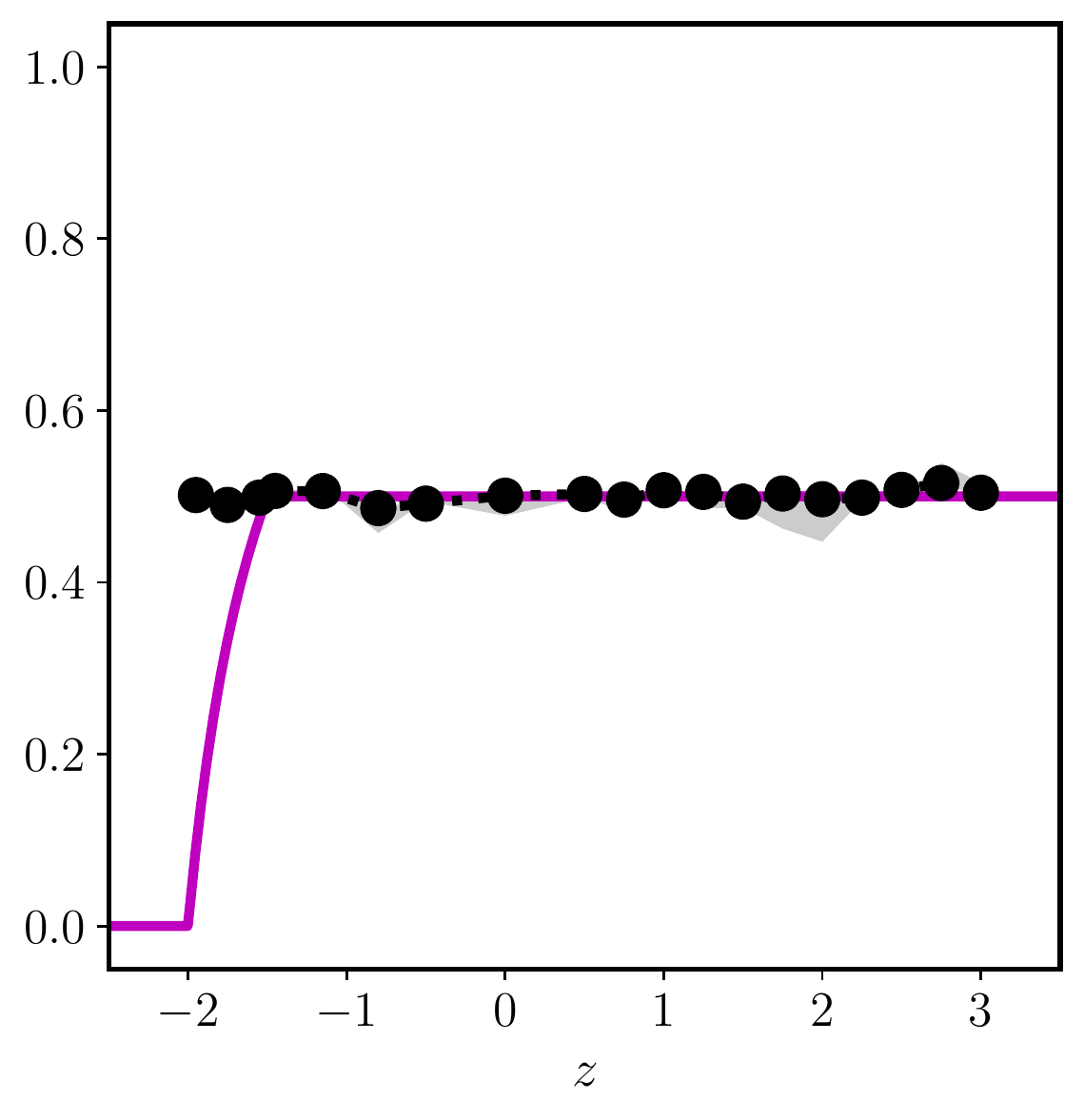}\label{fig:sweep_gap_lowest}}\hfill%
	\subfloat[$ \EGG $, $ \gamma^2=25^2 $]{\includegraphics[width=0.237\textwidth]{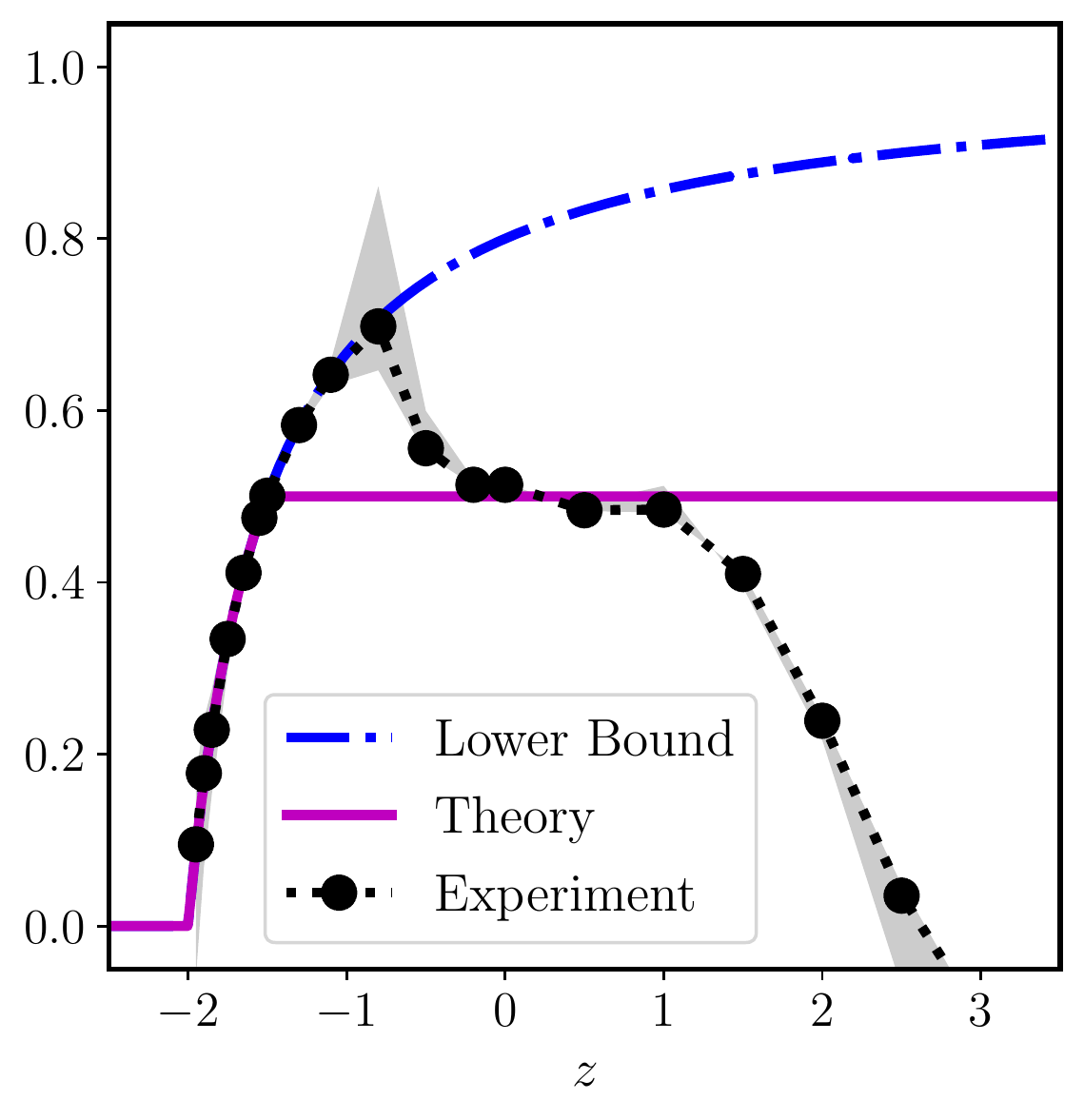}\label{fig:sweep_gap_high}}
	\caption{The numerical influence of data noise variance \( \gamma^2 \) for $ \Ld=A $. For two distinct $ \gamma^2 $ values, \cref{fig:sweep_risk_lowest,fig:sweep_risk_high} show convergence rate exponents for $ \E^{D_N}\cE_N $ v.s. $ z $, with $ z=p+2 $ being the prior smoothness shift parameter, while \cref{fig:sweep_gap_lowest,fig:sweep_gap_high} display rates for $ \EGG $ v.s. $ z $. Throughout, the solid magenta ``Theory'' curves denote the theoretical upper bound rate exponents, and the shaded regions denote one standard deviation from the mean rate exponent computed from 250 repetitions of the numerical experiment.
	}
	\label{fig:sweep_risk_and_gap}		
\end{figure}

\subsubsection{Out-of-distribution}\label{sec:numerics_within_out}
We now vary $ \al' $ to simulate distribution shift. With \( J=2^{16} \) and \( \tau_2=15 \), our results in \cref{tab:ratesPT-rs} show near perfect agreement with \cref{thm:theory_expectation_rvsharp} for out-of-distribution regimes on both sides of the boundary case \( \al'=\al+1/2 \). In the matching prior setting $ (z=0) $, \cref{fig:match_L1_alldistr,fig:match_L2_alldistr,fig:match_L3_alldistr} show the decay of the test error \cref{eqn:relative_error} with \( N \). The magenta lines are least square fits and the shaded regions denote one standard deviation from the mean with respect to resampling \( D_N \). The excellent numerical fits verify our assertions.

\begin{table}[htbp]
	\centering
	\footnotesize
	\caption{Distribution shift. Theoretical v.s. experimental (in parentheses) convergence rate exponents $ r $ in $ O(N^{-r}) $ of the relative expected squared $ L_{\nu}^{2}(H;H) $ out-of-distribution error \cref{eqn:relative_error} for rougher and smoother test measures.
	}
	\label{tab:ratesPT-rs}
	\renewcommand{\arraystretch}{1.2}
	\begin{tabular}{lcccccc}
		\toprule
		&
		\multicolumn{3}{c}{\emph{Rougher Test Measure}: $ \al'=4<\al=4.5 $} &
		\multicolumn{3}{c}{\emph{Smoother Test Measure}: $ \al'=5.25>\al=4.5 $}\\
		\cmidrule(lr){2-4}
		\cmidrule(lr){5-7}
		
		\multicolumn{1}{l}{\( \Ld \)} &
		\multicolumn{1}{c}{Rough Prior} &
		\multicolumn{1}{c}{Matching Prior} &
		\multicolumn{1}{c}{Smooth Prior} &
		\multicolumn{1}{c}{Rough Prior} &
		\multicolumn{1}{c}{Matching Prior} &
		\multicolumn{1}{c}{Smooth Prior} \\
		\midrule
		
		$A$ &  0.429 (0.428) & 0.600 (0.607) & 0.462 (0.462) &  1.000 (0.992) & 1.000 (0.996) & 0.846 (0.849)\\

		$\id$ &  0.733 (0.734) & 0.778 (0.788) & 0.667 (0.667) &  1.000 (0.986) & 1.000 (0.979) & 0.905 (0.905)\\

		$A^{-1}$ &  0.826 (0.837) & 0.846 (0.861) & 0.759 (0.764) &  1.000 (0.981) & 1.000 (0.975) & 0.931 (0.926)\\
		\bottomrule
	\end{tabular}
\end{table}

\begin{figure}[htbp]%
	\centering
	\subfloat[$ \ER $ v.s. $ N $]{
	\includegraphics[width=0.209\textwidth]{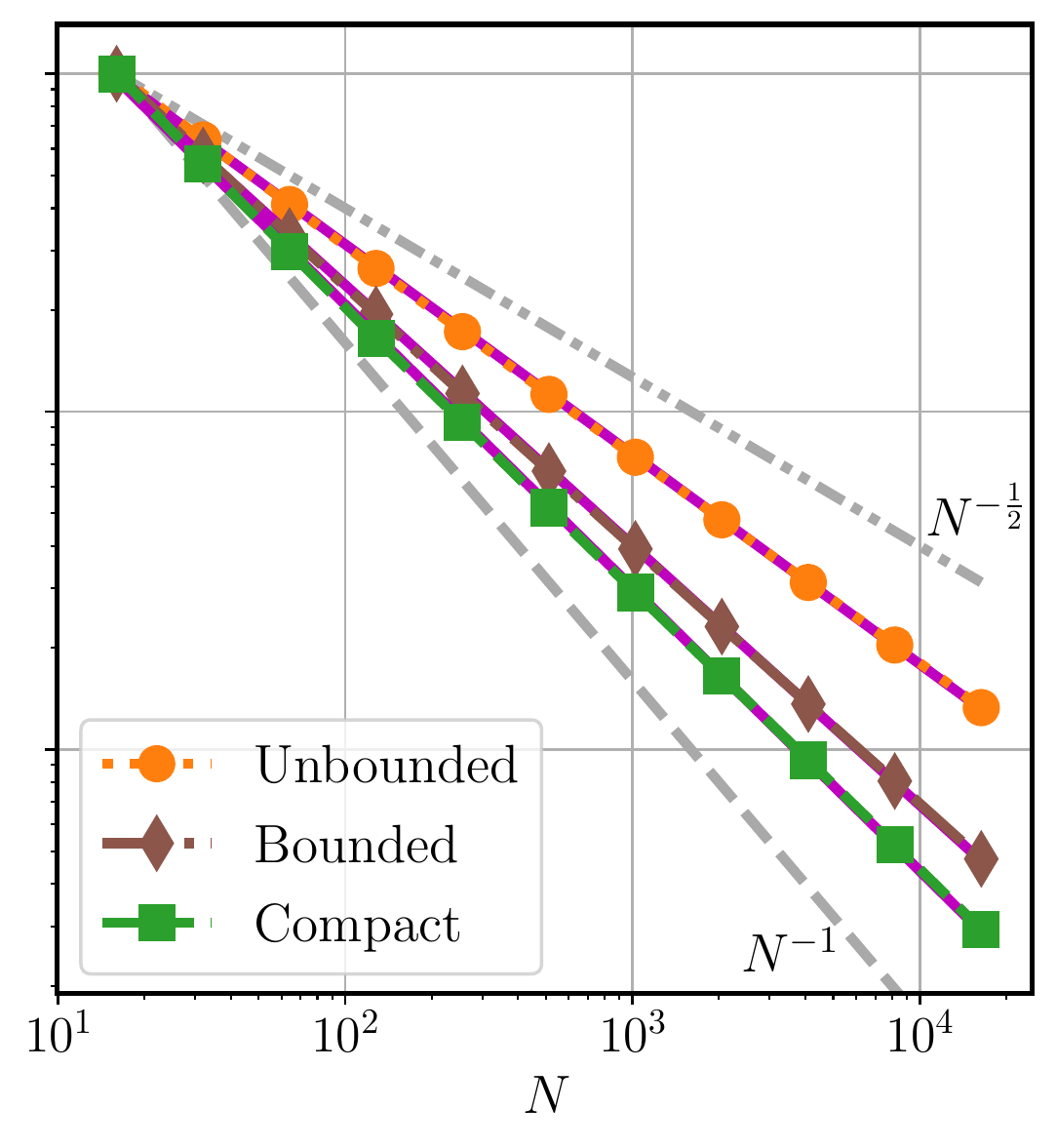}\label{fig:scaled_rates_wt}}\hfill%
	\subfloat[$ \Ld=A $, $ \gamma=10^{-1} $]{	\includegraphics[width=0.239\textwidth]{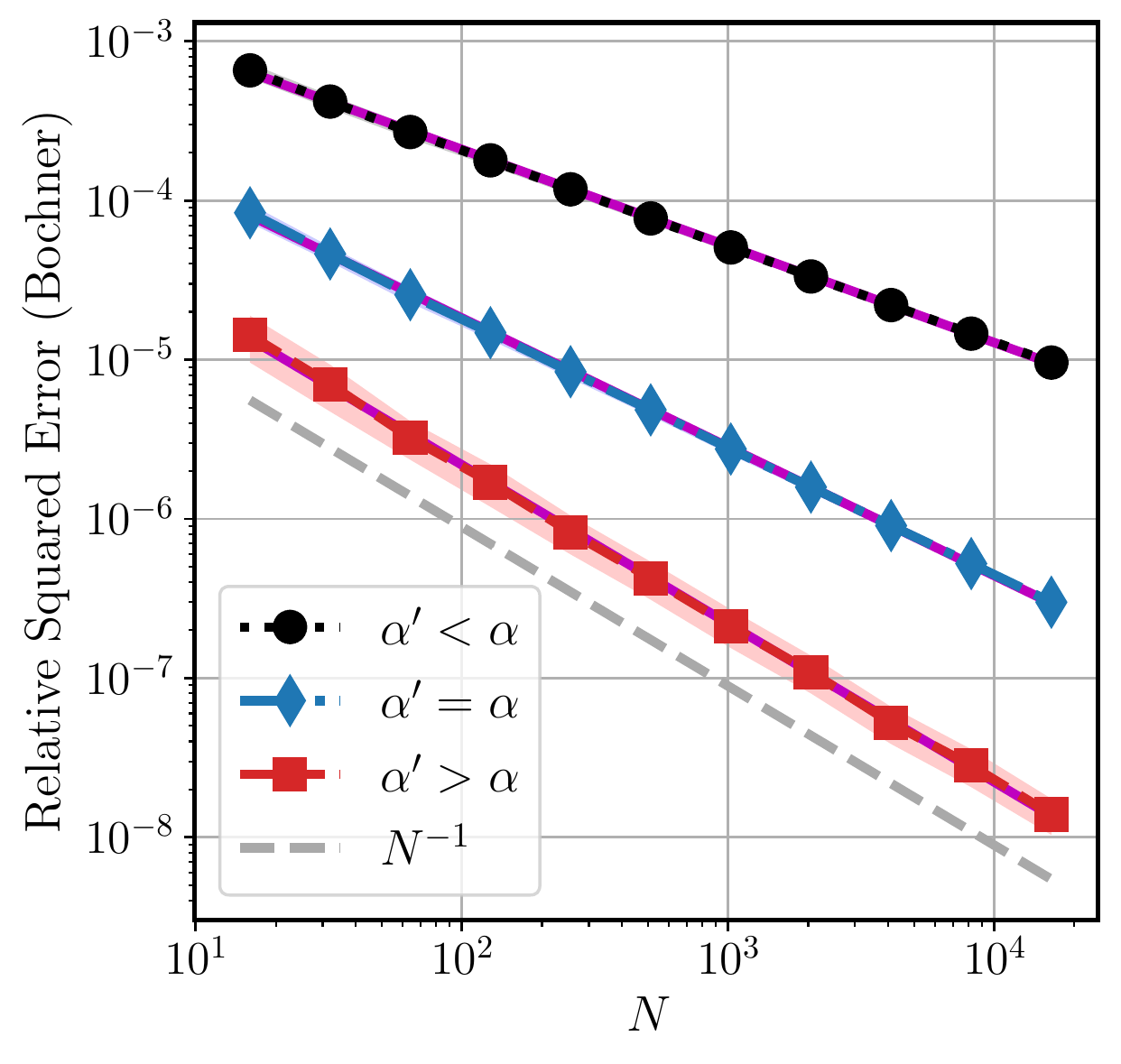}\label{fig:match_L1_alldistr}}\hfill%
	\subfloat[$ \Ld=\id $, $ \gamma=10^{-3} $]{	\includegraphics[width=0.225\textwidth]{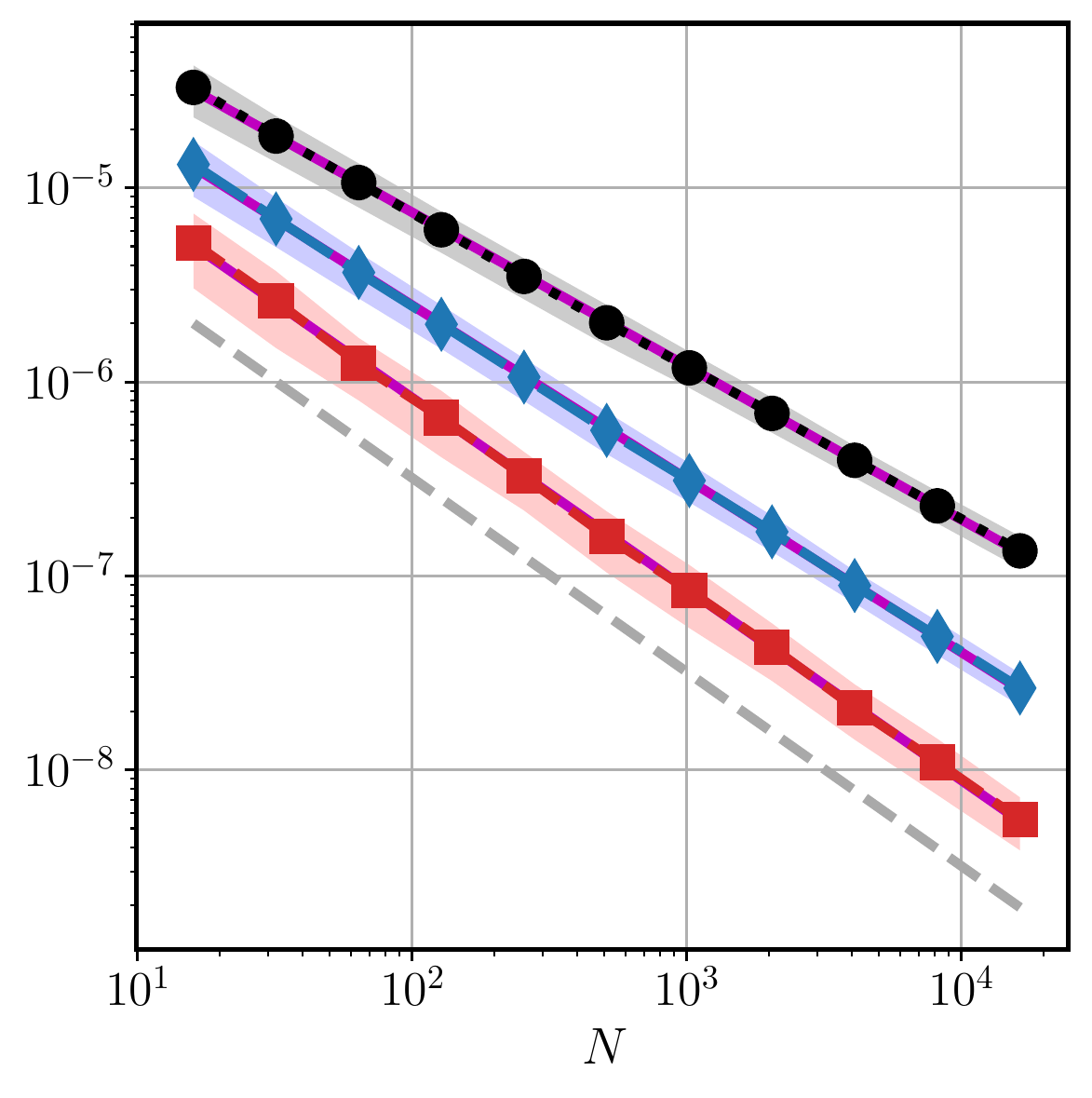}\label{fig:match_L2_alldistr}}\hfill%
	\subfloat[$ \Ld=A^{-1}$, $ \gamma=10^{-5} $]{	\includegraphics[width=0.225\textwidth]{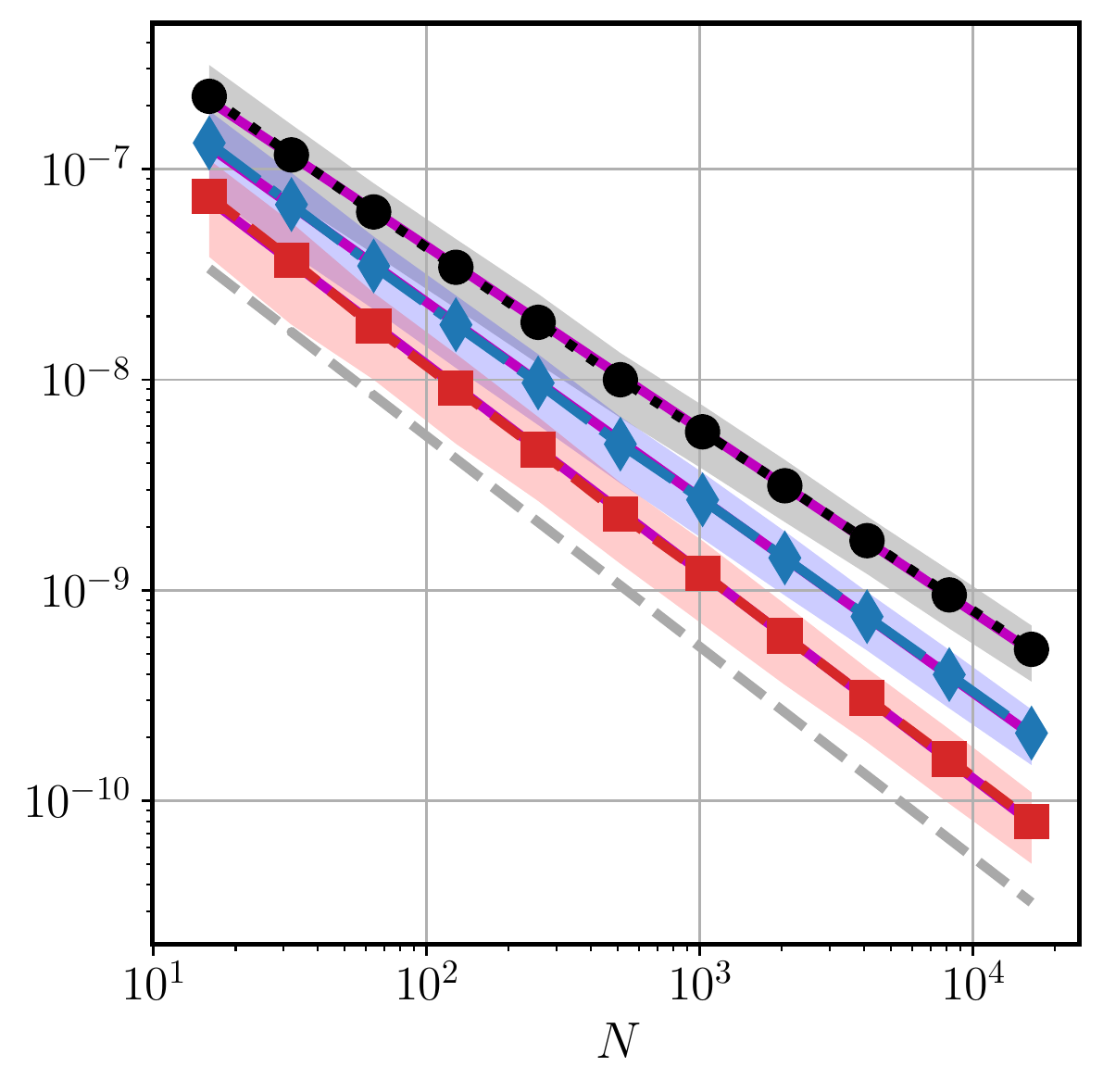}\label{fig:match_L3_alldistr}}
	\caption{Within the theory. \Cref{fig:scaled_rates_wt} (corresponding to \cref{tab:ratesPT} column four) shows that convergence improves with increased operator smoothing (the logarithmic vertical axis is rescaled to ease comparison of the slopes). \Cref{fig:match_L1_alldistr,fig:match_L2_alldistr,fig:match_L3_alldistr} are such that \( z=0 \) (matching \( p=s^{\star}+1/2 \)) and the test measures $ \nu' $ are either equal to ($ \al'=\al $), rougher than ($ \al'<\al $), or smoother than ($ \al'>\al $) the training measure $ \nu $. For fixed $ \Ld $, the same \( \bar{L}^{(N)} \) achieves smaller relative error \cref{eqn:relative_error} as $ \al' $ increases, that is, when testing against smoother input functions. In all cases, the observed rates closely match the theoretical ones (see \cref{tab:ratesPT,tab:ratesPT-rs}).}
	\label{fig:match_alldistr}
\end{figure}

\subsection{Beyond the theory}\label{sec:numerics_beyond}
In this subsection, we consider truths $ \Ld\in\HS(H_{\cK};H) $ (with \( \cK \) satisfying  \cref{cond:existence_of_K}) that are \emph{not necessarily diagonalized} by $ \{\varphi_j\} $. So, the infinite matrix $ \Ldmat\defeq \{\Ldmat_{jk}\} $ from \cref{eqn:intro_ideas_ip_matrix} must be estimated instead of $ \ld $. Recall that \( \Lambda \) has eigenpairs \( \{(\lambda_k^2, \phi_k)\} \). By \cref{fact:setup_bayes_measure}, $ \Ld\in\HS(H_{\Lambda};H) $ so the expansion $ \Ld=\sum_{i,j}(\lambda_j\Ldmat_{ij})\varphi_i\otimes_{H_{\Lambda}}(\lambda_j\phi_j)=\sum_{i,j}\Ldmat_{ij}\varphi_i\otimes\phi_j $ always exists and is unique. Yet, we have no theory for posterior estimators of $ \Ldmat $. To derive the posterior mean, we notice that the inverse problem for $ \mathsf{L}\condbar D_N $ decouples along rows of $ \mathsf{L}=\{\mathsf{L}_{jk}\} $, which are denoted by $ \mathsf{L}_{j:} $ for $ j\in\N $. We assume a Gaussian prior $ \mathsf{L}_{j:}\sim\normal(0,\Sigma_j) $, where $ \Sigma_j=\diag(\{\sigma_{jk}^2\}_{k\in\N}) $ is diagonal for simplicity. Thus $ (\mathsf{L}_{j:})_k=\mathsf{L}_{jk}\sim\normal(0,\sigma_{jk}^2) $. By deriving the normal equations, we obtain for $ j,k$, and $\ell\in\N $ the posterior mean
\begin{equation}\label{eqn:outside_theory_posterior_mean}
\bar{\mathsf{L}}_{j:}^{(N)}=	\bigl(\mathsf{A}^{(N)}+\tfrac{\gamma^2}{N}\Sigma_j^{-1}\bigr)^{-1}\mathsf{b}_j^{(N)}\,,\qw  \mathsf{A}_{\ell k}^{(N)}\defeq \avgn{x_{\ell}}{x_k}\qa \bigl(\mathsf{b}_j^{(N)}\bigr)_{\ell}\defeq \avgn{y_j}{x_{\ell}}\,.
\end{equation}

We use the same covariances \cref{eqn:covariance_matern} diagonalized in Fourier sine input basis $ \{\phi_j\} $, but now use Volterra cosine output basis $ \{\varphi_j\} $ as in \cref{fig:assump_ex_data_decay}, where $ z\mapsto \varphi_j(z)\defeq \sqrt{2}\cos((j-\frac{1}{2})\pi z) $. Define the \emph{divergence form elliptic operator} $ A_a\colon \dom(A_a)\subset H\to H $ by $ h\mapsto A_ah\defeq -\grad\cdot(a\grad h) $, where $ \dom(A_a)=\dom(A) $ as before and $ z\mapsto a(z)\defeq \exp(-3z) $ is smooth. We learn (via $ \bar{\mathsf{L}}^{(N)} $) unbounded, bounded, and compact self-adjoint operators $\Ld=A_a,\id$, and $A_a^{-1} $, respectively. For each of the three $ \Ld $, we pick prior variance sequences $ \sigma_{jk}^2=\sigma_{jk}^{2}(\Ld) $  given by
\begin{equation}\label{eqn:beyond_prior}
\sigma_{jk}^2(\Ld)\defeq
\begin{cases}
(jk)^{-(z-2)}\bigl(\tfrac{1+(k/j)^2}{1+(j-k)^2}\bigr)^2, & \text{if }\, \Ld=A_a\,,\\[1mm]
(jk)^{-z}\bigl(\tfrac{k+k/j}{1+j+(j-k)^2}\bigr)^2, & \text{if }\, \Ld=\id\,,\\[1mm]
(jk)^{-(z+2)}\bigl(\tfrac{1+j/k}{1+(j-k)^2}\bigr)^2, & \text{if }\, \Ld=A_a^{-1}.
\end{cases}
\end{equation}
These priors ensure that $ \mathsf{L} $ matches the exact asymptotic behavior (as $ j\to\infty$, $k\to\infty$, and $j=k\to\infty $) of $ \Ldmat $ when $ z=0 $. Our simulation setup follows \cref{sec:numerics_within}, except now with $ J=2^{12} $, $ N $ up to $ 2^{14} $, and only $ 100 $ Monte Carlo repetitions. Although $ A_a $ is not diagonal in $ \varphi_j\neq\phi_j $ (each $ \Ldmat $ is dense) and the posterior mean estimator is now a doubly-indexed sequence, our results in \cref{fig:match_alldistr_ot} support the same conclusions previously asserted.

\begin{figure}[htbp]%
	\centering
	\subfloat[$ \ER $ v.s. $ N $]{
	\includegraphics[width=0.209\textwidth]{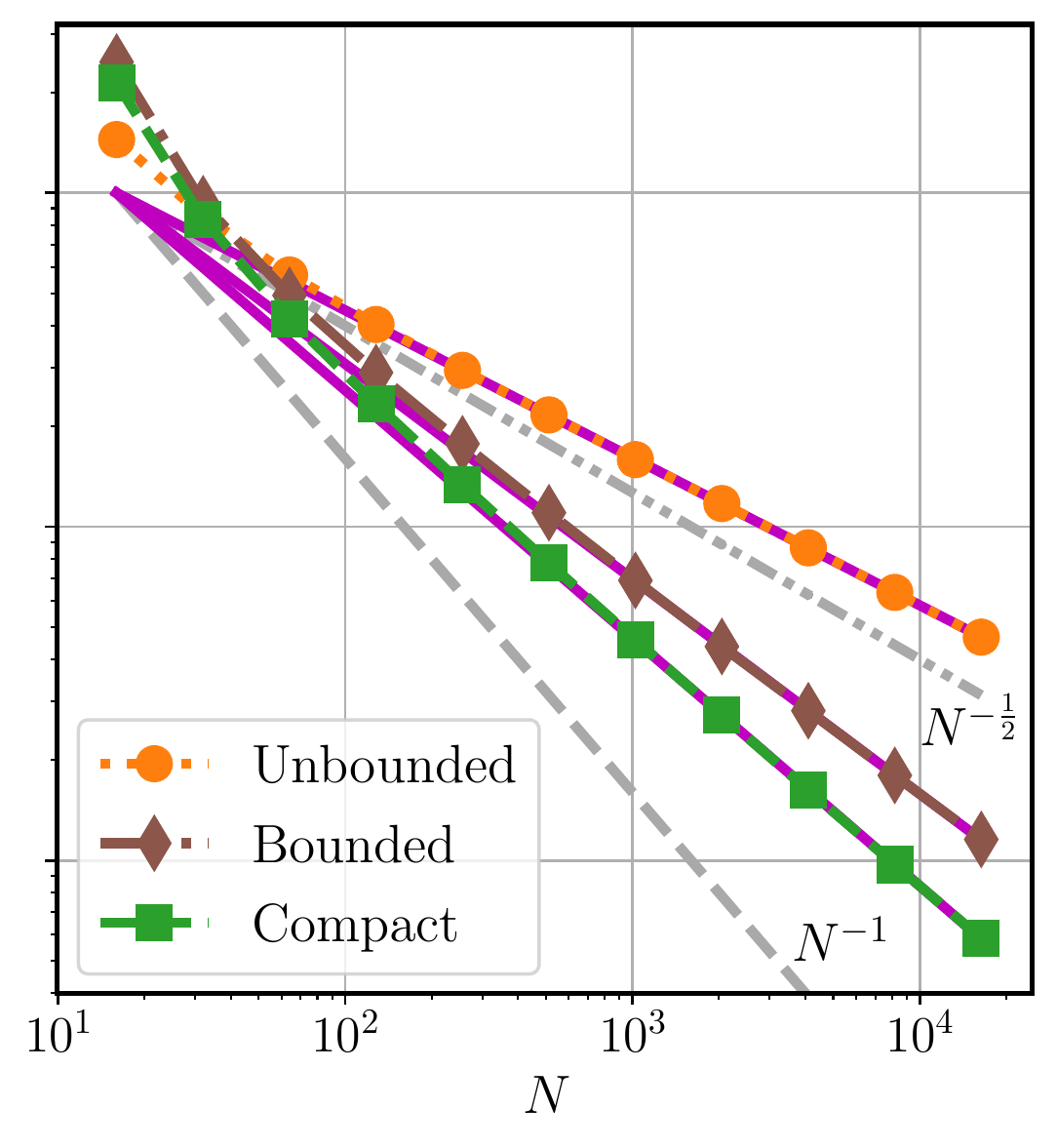}\label{fig:scaled_rates_ot}}\hfill%
	\subfloat[$ \Ld=A_a $, $ \gamma=10^{-1} $]{	\includegraphics[width=0.239\textwidth]{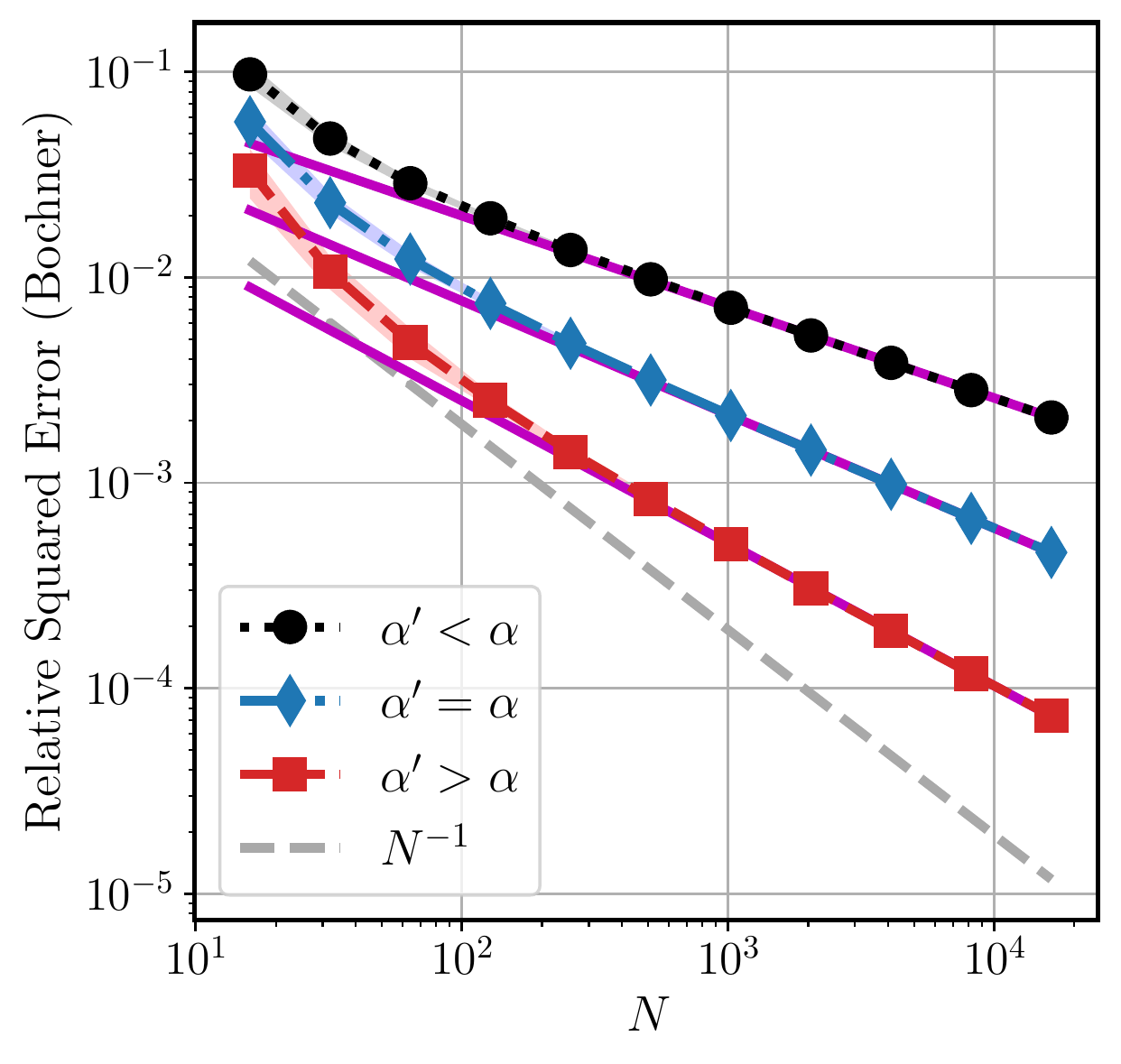}\label{fig:match_L1_alldistr_ot}}\hfill%
	\subfloat[$ \Ld=\id $, $ \gamma=10^{-3} $]{	\includegraphics[width=0.225\textwidth]{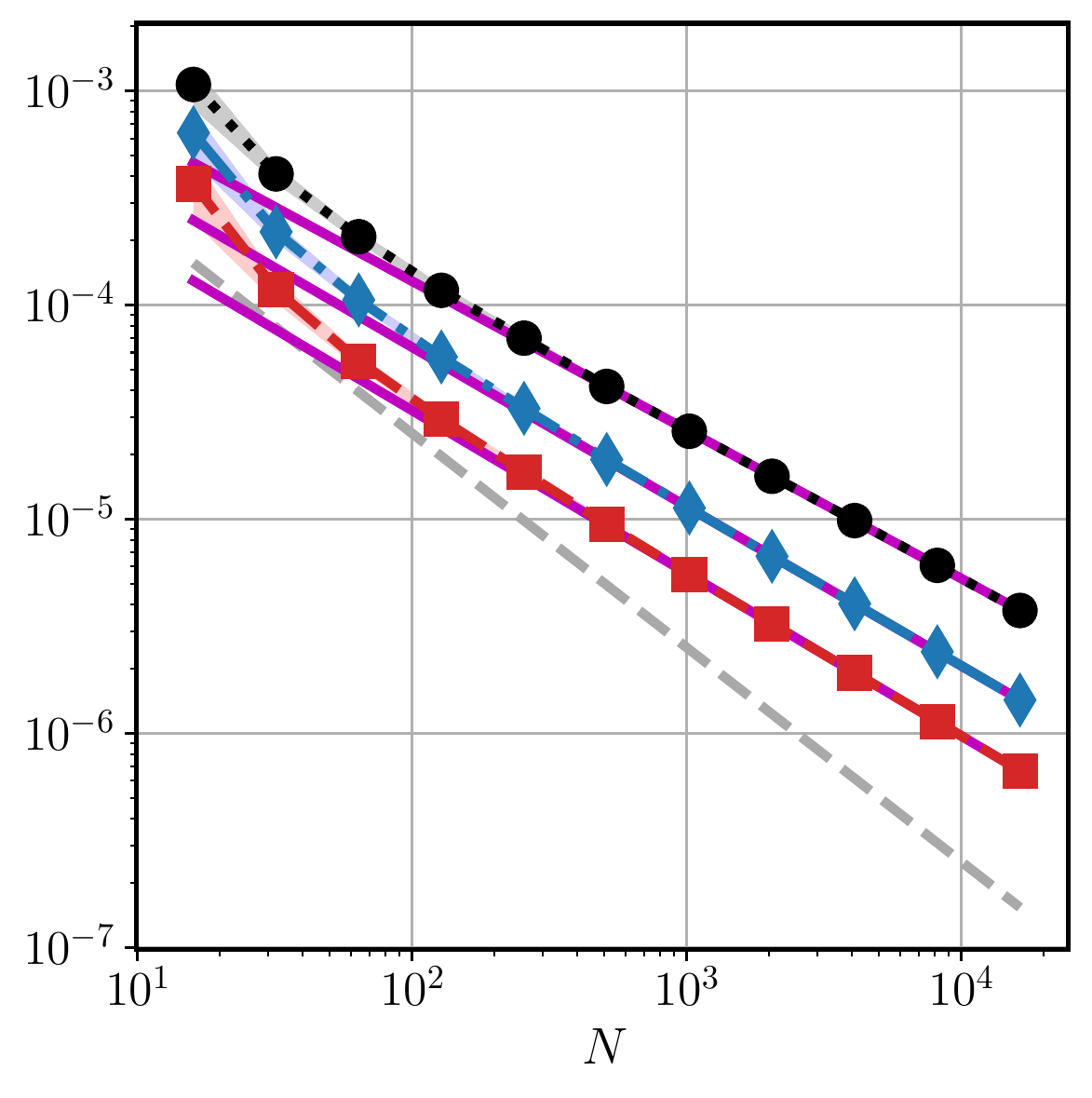}\label{fig:match_L2_alldistr_ot}}\hfill%
	\subfloat[$ \Ld=A_a^{-1}$, $ \gamma=10^{-5} $]{	\includegraphics[width=0.225\textwidth]{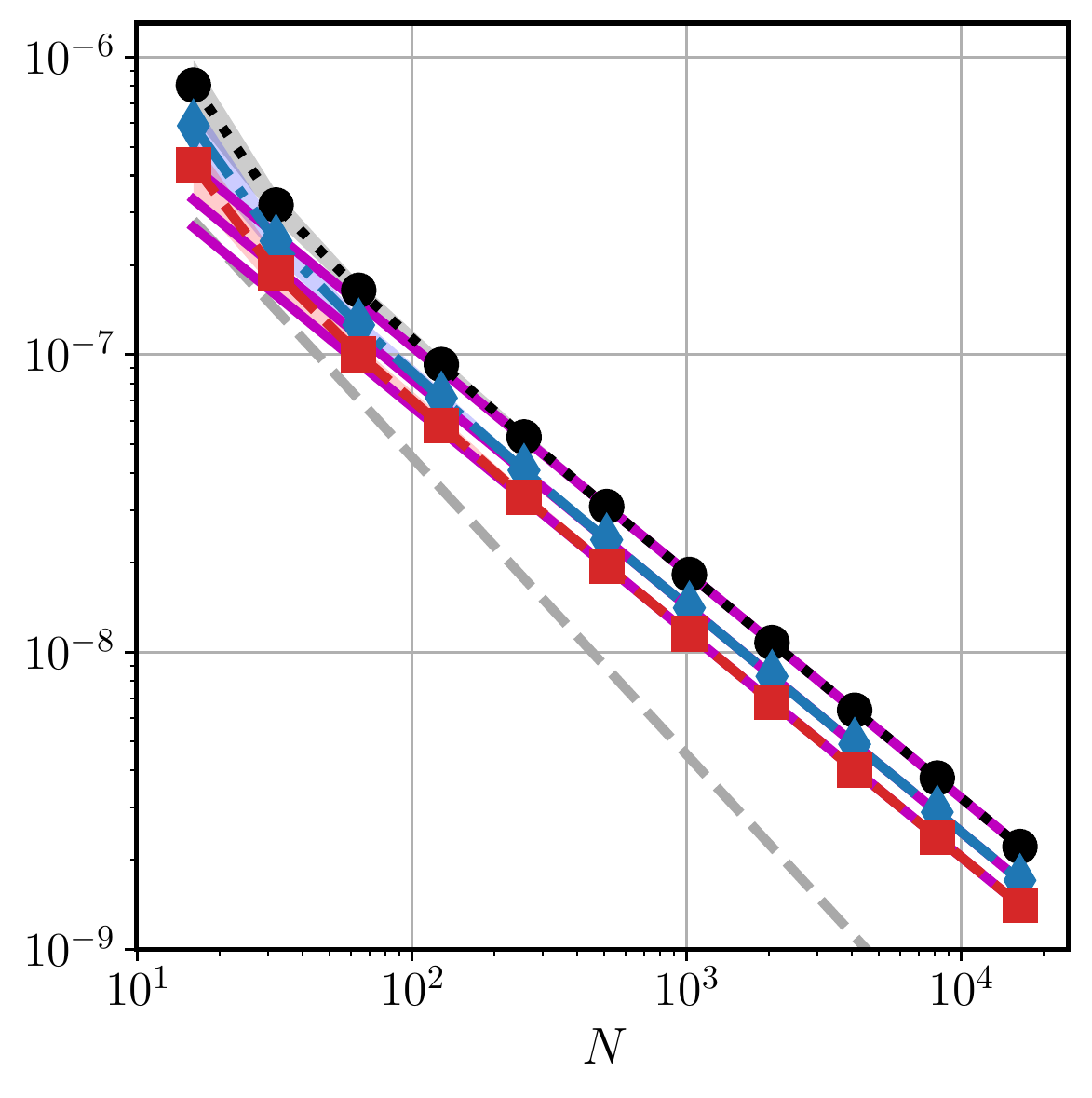}\label{fig:match_L3_alldistr_ot}}
	\caption{Beyond the theory. Analogous to \cref{fig:match_alldistr} except with the non-diagonal elliptic operator $ A_a $.}
	\label{fig:match_alldistr_ot}
\end{figure}

\section{Conclusion}\label{sec:conclusion}
This paper concerns the supervised learning of linear operators between Hilbert spaces. Learning is framed as a Bayesian inverse problem with a linear operator as the unknown quantity. Working in the best-case scenario of known eigenvectors, the analysis establishes convergence rates in the infinite data limit. The main results reveal useful theoretical insights about operator learning, including what types of operators are harder to learn than others, what types of training data lead to reduced sample complexity, and how distribution shift affects error. The work opens up the following directions for future research.

\paragraph*{Extensions in diagonal setting}
One immediate extension of our diagonal approach involves generalizing it from self-adjoint operators with known eigenvectors to non-self-adjoint operators with known singular vectors. Another involves taking the simultaneous large data and small noise limit. Although our approach requires Gaussian conjugacy, Gaussian priors are not suitable for all problems. Recent work using non-conjugate priors may prove useful in our setting \cite{gugushvili2020bayesian,knapik2018general,ray2013bayesian,trabs2018bayesian}. To exploit the Bayesian posterior beyond just theoretical contraction performance, exploration of uncertainty quantification via credible sets is also of interest.

\paragraph{Beyond diagonal operators}\label{par:conclusion_beyond}
In the linear setting, it is desirable to remove the known eigenbasis assumption but retain rates of convergence. The proof of \cref{fact:forward_facts} in \cref{app:extra} implies that the SVD of the random forward map \( K_X \) in \cref{eqn:intro_ideas_ip_operator} is determined by functional PCA of \( X \). Thus, the SVD approach in \cref{sec:intro_ideas_compare} and \cite{knapik2011bayesian} could be used to recover the doubly-indexed infinite matrix coordinates of the true operator in the (random) SVD basis. Another approach is to directly study the non-diagonal problem \cref{eqn:intro_ideas_ip_matrix} as in \cref{sec:numerics_beyond}. Nonlinear operators also deserve attention, as the experimental results in \cite{de2022cost} demonstrate.  Central to their statistical analysis will be the modern architectures (beyond kernel methods \cite{caponnetto2007optimal,rastogi2020convergence}) that parametrize the unknown operators and their inherent problem-dependent structure.

\appendix
\section{Proofs of main results}\label{app:proofs}
In this appendix, we provide proofs of the theorems from the main body of the paper, in order of appearance. We begin with \cref{thm:intro_ideas_thm}.

\begin{proof}[Proof of \cref{thm:intro_ideas_thm}]
	\Cref{thm:intro_ideas_thm} is a special case of \cref{thm:upper-expect} in the case \( \al'<\al+1/2 \) in \( \rho_N(\al,\al',p) \) \cref{eqn:theory_expectation_rho_J}. It remains to show that the Gaussian measure \( \nu=\normal(0,\Lambda) \) satisfies \cref{assump:theory_expectation_train}. The KL expansion coefficients certainly satisfy the fourth moment condition. The final condition on \( \{\avgn{g_j}{g_j}\} \) is verified by \cref{lem:chisq} because \( \{g_{jn}\}_{n=1}^N\sim \normal(0,\vartheta_j^2)^{\otimes N} \).
\end{proof}

\subsection[Proofs for subsection 3.2]{Proofs for \cref{sec:theory_expectation}}\label{app:proofs_expectation}
Under \cref{assump:theory_assumptions_main,assump:theory_expectation_train}, we calculate from \cref{eqn:posterior_sequence,eqn:bochner_coord} that $ \E^{Y\condbar X}\E^{L^{(N)}\sim\post}\norm{\Ld -L^{(N)}}^{2}_{L_{\nu'}^{2}(H;H)}=\cI_1+\cI_2+\cI_3 $ for \( N\in\N \), where
\begin{subequations}\label{eqn:proofs_iall}
	\begin{align}
	\cI_1=\sum_{j=1}^{\infty}\dfrac{\vartheta_j'^{2}\abs{\ld_j}^{2}}{(1+N\gamma^{-2}\sigma_j^2\avgn{g_j}{g_j})^2}&\, ,\quad 
	\cI_2=\sum_{j=1}^{\infty}\dfrac{N\vartheta_j'^{2}\gamma^{-2}\sigma_j^{4}\avgn{g_j}{g_j}}{(1+N\gamma^{-2}\sigma_j^2\avgn{g_j}{g_j})^2} \label{eqn:i12} \, ,\qa\\
	\cI_3=\sum_{j=1}^{\infty}&\dfrac{\vartheta_j'^{2}\sigma_j^2}{1+N\gamma^{-2}\sigma_j^2 \avgn{g_j}{g_j}} \label{eqn:i3}\, .
	\end{align}
\end{subequations}
This is the test error averaged only over the posterior and noise distributions, keeping the random design \( X \) fixed. The posterior mean test error \cref{eqn:error_bochner_intro2} is given by \( \E[\cI_1+\cI_2] \) only. Recall from \cref{assump:theory_assumptions_main} that $\vartheta_j'^2$ \cref{eqn:theory_assumptions_test} decays as $j^{-2\alpha'}$ (determining the test distribution $\nu'$) and
$\sigma_j^2$ \cref{eqn:theory_assumptions_prior} decays (or grows) as $j^{-2p}$ (determining the prior on $L$). The three series depend on $X=\{x_n\}$ through the correlated r.v.s $\{g_{jn}=\ip{\varphi_j}{x_n}\}$ \cref{eqn:setup_bayes_gjn}. These are mean zero with variance \( \vartheta_j^2 \) \cref{eqn:theory_assumptions_train} decaying as \( j^{-2\al} \). The truth is $\ld\in\cH^s $, as in \cref{item:as_smooth_truth}. All of the following proofs involve estimating the three random series \cref{eqn:proofs_iall}, which converge $ \P $-a.s. by \cref{item:as_exponent} in \cref{assump:theory_assumptions_main} (and by \cref{lem:as_series} for \( \cI_2 \)).
For convenience, we set \( u\defeq 2(\al+p)>1 \) and write $ \avgn{g_j}{g_j} \eqdef \vartheta_j^2 Z_j^{(N)}$. Thus, \( \E Z_j^{(N)} = 1 \). We also set \( \gamma\equiv 1 \) without loss of generality.

\begin{proof}[Proof of \cref{thm:upper-expect}]	
	We split each of the three series \cref{eqn:proofs_iall} into sums over two disjoint index sets $ \{j\in\N\colon  j\leq N^{1/u}\} $ and $ \{j\in\N\colon  j>N^{1/u}\} $. We denote such sums by $ \cI_i^{\leq} $ and $ \cI_i^{>} $, respectively, for each $ i\in\{1,2,3\} $. We must estimate their expectations over \( X\sim\nu^{\otimes N} \) to prove the assertion \cref{eqn:upper-expect1}. Notice that $ Nj^{-u}\simeq 1+Nj^{-u} $ whenever $ j\leq N^{1/u} $.
	
	Beginning with $\E \cI_2$, its partial sum $ \E \cI_2^{\leq} $ satisfies
	\[
	\E \sum_{j\leq N^{1/u}}\dfrac{N\vartheta_j'^{2}\sigma_j^{4}\avgn{g_j}{g_j}}{(1+N\sigma_j^2\avgn{g_j}{g_j})^2}
	\leq \sum_{j \leq N^{1/u}}\dfrac{\vartheta_j'^{2}\E\bigl[ (\avgn{g_j}{g_j})^{-1} \bigr]}{N} 
	\lesssim \sum_{j \leq N^{1/u}} \dfrac{\vartheta_j'^2\vartheta_j^{-2}}{N}\asymp \sum_{j \leq N^{1/u}}\frac{j^{-2(\al'+p)}}{1+Nj^{-u}}
	\]
	as \( N\to\infty \). We used \cref{assump:theory_expectation_train} and Lyapunov's inequality to bound the negative moment. By applying \cref{eqn:knapik_2p2} in \Cref{lem:knapik_2} (with $ t=2(\al'+p) $, $ v=1 $, and condition $ t>1 $ satisfied by \cref{item:as_exponent}) to the last sum, we deduce that $ \E \cI_2^{\leq} =O(\rho_N)$. The tail series satisfies
	\[
	\textstyle
	\E\cI_2^{>}\leq \sum_{j> N^{1/u}}N\vartheta_j'^{2}\sigma_j^{4}\E \bigl[\vartheta_j^2 Z_j^{(N)}]
	\asymp 
	N \sum_{j> N^{1/u}}j^{-2(\al'+\al+2p)}
	\asymp N^{-(1-{(\al +1/2 -\al')}/{(\al+p)})}
	\]
	as $ N\to\infty $ by \cref{eqn:knapik_2p1} in \Cref{lem:knapik_2} (applied with $ t=2(\al'+\al+2p)>1 $ by \cref{item:as_exponent}). This is always the same order as, or negligible compared to, the upper bound on $ \E\cI_2^{\leq} $.
	
	By the same argument used for \( \E\cI_2^{\leq} \) (bounding its denominator by one and using \cref{assump:theory_expectation_train} plus Lyapunov's inequality), we deduce that $ \E\cI_3^{\leq} = O(\rho_N) $ also. The tail \( \E\cI_3^{>} \) is bounded above by \( \sum_{j> N^{1/u}}\vartheta_j'^{2}\sigma_j^2\asymp \sum_{j> N^{1/u}}j^{-2(\al'+p)} \). This sum is the same order as the bound on \( \E\cI_2^{>} \) by \cref{eqn:knapik_2p1} in \Cref{lem:knapik_2} (with $ t=2(\al'+p)>1 $ by \cref{item:as_exponent}).
	
	Last, again by \cref{assump:theory_expectation_train} and Lyapunov's inequality, $ \E\cI_1^{\leq} $ is bounded above by
	\begin{equation}\label{eqn:proofs_expectation_i1upper}
	\sum\nolimits_{j\leq N^{\frac{1}{u}}}\dfrac{\vartheta_j'^2\abs{\ld_j}^2\E\bigl[(\avgn{g_j}{g_j})^{-2}\bigr]}{(N\sigma_j^2)^2}
	\lesssim
	\sum\nolimits_{j\leq N^{\frac{1}{u}}}\dfrac{\vartheta_j'^{2}\abs{\ld_j}^2(\vartheta_j^2)^{-2}}{(N\sigma_j^2)^2}
	\asymp
	\sum\nolimits_{j\leq N^{\frac{1}{u}}}\frac{j^{-2\al'}\abs{\ld_j}^2}{(1+Nj^{-u})^2}
	\end{equation}
	as \( N\to\infty \). Application of \cref{eqn:knapik_1p2} in \Cref{lem:knapik_1} (with \( \xi=\ld \), $ t=2\al' $, $ q=s $, $ v=2 $, and $ t\geq-2q $ satisfied by \cref{item:as_exponent}) shows that this last sum is \( o(N^{-(\al'+s)/(\al+p)}) \) if \( (\al'+s)/(\al+p)<2 \) or \( \Theta(N^{-2}) \) otherwise. The tail sum matches this bound in the first case and is strictly smaller otherwise because \( \E \cI_1^{>} \leq \sum_{j>N^{1/u}} \vartheta_j'^2\abs{\ld_j}^2\asymp\sum_{j>N^{1/u}}j^{-2\al'}\abs{\ld_j}^2 \) (apply \cref{eqn:knapik_1p12} in \Cref{lem:knapik_1} with \( \xi=\ld \), $ t=2\al' $ and $ q=s $). All together, we deduce that $ \E\cI_2 $ and $ \E\cI_3 $ have the same upper bound \( \rho_N\gg N^{-2} \). This implies \cref{eqn:upper-expect1}. The uniform bound over \( \norm{\ld}_{\cH^s}\lesssim 1 \) follows from the first assertion in \cite[Lem. 8.1]{knapik2011bayesian} (this turns the little-\( o \) into a big-\( O \) as claimed). The final assertion follows because the posterior mean test error only corresponds to $ \cI_1 $ and $ \cI_2 $.
\end{proof}

\begin{proof}[Proof of \cref{thm:lower-expect}]
	The proof proceeds by developing lower bounds on each of the three series \cref{eqn:proofs_iall}, using the same disjoint index sets approach in the proof of \cref{thm:upper-expect}. For $ \E \cI_3 $, since $ r\mapsto (1+ar)^{-1} $ is convex on $ [0,\infty) $ for all $ a\geq 0 $, Jensen's inequality yields
	\begin{equation}\label{eqn:proof_I3_lower}
	\E\sum_{j=1}^{\infty} \dfrac{\vartheta_j'^{2}\sigma_j^2}{1+N\sigma_j^2\avgn{g_j}{g_j}}
	\geq \sum_{j=1}^{\infty} \dfrac{\vartheta_j'^{2}\sigma_j^2}{1+N\sigma_j^2\E[\vartheta_j^2 Z_j^{(N)} ]} 
	\asymp \sum_{j=1}^{\infty}\dfrac{j^{-2(\al'+p)}}{1+Nj^{-u}} \qas N\to\infty\, .
	\end{equation}
	The last sum is \( \Theta(\rho_N) \) by \cref{eqn:knapik_2p2} in \cref{lem:knapik_2} (with $ t=2(\al'+p) >1$ by \ref{item:as_exponent} and $ v=1 $).
	
	Next, \( \E\cI_2 \geq \E\cI_2^{\leq} \) by nonnegativity. For any positive $ \tau_N\to 0 $, define the events \( A_j^{(N)}\defeq \bigl\{\omega\in\varOmega\colon  Z_j^{(N)}(\omega)\geq \tau_N\bigr\} \) for every \( j\) and \( N\in\N\). The law of total expectation yields
	\begin{align*}
	\E\cI_2^{\leq}
	=&\sum\nolimits_{j\leq N^{1/u}} N \vartheta_j'^2\sigma_j^4\vartheta_j^2\E\Biggl[\dfrac{Z_j^{(N)}}{(1+N\sigma_j^2\vartheta_j^2 Z_j^{(N)})^2} \lllcondbar A_j^{(N)}\Biggr]\P\bigl(A_j^{(N)}\bigr) \\
	& + \sum\nolimits_{j\leq N^{1/u}}N\vartheta_j'^2\sigma_j^4\vartheta_j^2\E\Biggl[\dfrac{Z_j^{(N)}}{(1+N\sigma_j^2\vartheta_j^2 Z_j^{(N)})^2}\lllcondbar \bigl(A_j^{(N)}\bigr)^{\comp}\Biggr]\P\bigl(A_j^{(N)}\bigr)^{\comp}\, .
	\end{align*}
	The second term in the above display is nonnegative, so we obtain
	\begin{align*}
	\E\cI_2^{\leq}&\geq \sum\nolimits_{j\leq N^{1/u}}\tau_N N\vartheta_j'^2\sigma_j^4\vartheta_j^2\E\bigl[(1+N\sigma_j^2\vartheta_j^2 Z_j^{(N)})^{-2}\lcondbar A_j^{(N)}\bigr]\P\bigl(A_j^{(N)}\bigr)\\
	&\geq \sum_{j\leq N^{1/u}}\dfrac{\tau_N N\vartheta_j'^2\sigma_j^4\vartheta_j^2\P\bigl(A_j^{(N)}\bigr)}{\bigl(1+N\sigma_j^2\vartheta_j^2\E\bigl[Z_j^{(N)}\lcondbar A_j^{(N)}\bigr]\bigr)^2}
	= \sum_{j\leq N^{1/u}}\dfrac{\tau_N N\vartheta_j'^2\sigma_j^4\vartheta_j^2\P\bigl(A_j^{(N)}\bigr)^3}{\bigl(\P\bigl(A_j^{(N)}\bigr)+N\sigma_j^2\vartheta_j^2\E\bigl[\one_{A_j^{(N)}}Z_j^{(N)}\bigr]\bigr)^2}\,.
	\end{align*}
	We applied conditional Jensen's inequality to yield the second inequality because $ r\mapsto (1+ar)^{-2} $ is convex on $[0,\infty) $ for any $ a\geq 0$. Next, Markov's inequality plus \cref{assump:theory_expectation_train} gives
	\[
	\textstyle\sup_{j\geq 1}\P\bigl(A_j^{(N)}\bigr)^{\comp}=\sup_{j\geq 1}\P\bigl\{(Z_j^{(N)})^{-1}>\tau_{N}^{-1}\bigr\}\leq \sup_{j\geq 1}\tau_N\E\bigl[(Z_j^{(N)})^{-1}\bigr]\to 0\qas N\to\infty\, .
	\]
	This implies $ \inf_{j\geq 1}\P(A_j^{(N)})\to 1 $ as \( N\to\infty \). 
	Using this and the facts $ \E[\one_{A}Z_j^{(N)}] \leq \E[Z_j^{(N)}]=1$ and $ \P(A)\leq 1 $ for any $ A\in\cF $ and applying $ 1+Nj^{-u}\simeq Nj^{-u} $ for $ j\leq N^{1/u} $ twice yields
	\begin{equation*}
		\E\cI_2^{\leq}\gtrsim \sum\nolimits_{j\leq N^{1/u}}\dfrac{\tau_N N\vartheta_j'^2\sigma_j^4\vartheta_j^2\P\bigl(A_j^{(N)}\bigr)^3}{(1+N\sigma_j^2\vartheta_j^2)^2}
		\gtrsim \tau_N\sum\nolimits_{j\leq N^{1/u}}\dfrac{ j^{-2(\al'+p)}}{1+Nj^{-u}} \qas N\to \infty\,.
	\end{equation*}
	Comparing to \cref{eqn:proof_I3_lower}, we deduce \( \E\cI_2=\Omega(\tau_N\rho_N) \). This is negligible relative to $ \E\cI_3=\Omega(\rho_N) $.
	
	Last, by Jensen's inequality, we lower bound \( \E\cI_1 \) by the rightmost sum in \cref{eqn:proofs_expectation_i1upper}, which is always $ \Omega(N^{-2}) $ (if $ \ld\neq 0 $), plus the tail of the same sum, which gives the second term in \cref{eqn:lower-expect} by \cref{eqn:knapik_1p12} in \cref{lem:knapik_1} (with \(\xi=\ld, t=2\al',q=s \), and \( v=2 \)). The $ \Omega(N^{-2}) $ contribution from the first sum is dominated by both \( \rho_N =\Omega(N^{-1})\) and \( \tau_N\rho_N \) if \( \tau_N\gg N^{-1} \). Therefore, the posterior sample test error \cref{eqn:error_bochner_intro} enjoys the asserted rate, while the posterior mean test error $ \E[\cI_1+\cI_2] $ only admits the bound \cref{eqn:lower-expect} with the \( \tau_N \) factor as claimed.
\end{proof}

\begin{proof}[Proof of \cref{thm:theory_expectation_rvsharp}]
	The \( \rho_N \) term (corresponding to \( \cI_2 \) and \( \cI_3 \) in  \cref{eqn:proofs_iall}) in the assertion \cref{eqn:theory_expectation_rvsharp} follows from \cref{thm:upper-expect,thm:lower-expect} for the posterior sample estimator. It remains to obtain the second term in \cref{eqn:theory_expectation_rvsharp}. Following the argument from the proof of \cref{thm:upper-expect}, \( \E\cI_1^{\leq} \) is asymptotically bounded above by the last sum in \cref{eqn:proofs_expectation_i1upper}. Now given \( \abs{\ld_j}\asymp j^{-1/2-s}S(j) \), by the full version of \cite[Lem. 8.2]{knapik2011bayesian} (applied with \( \xi=\ld \), \( t=-2\al' \), \( v=2 \), \( q=s \), \( \cS=S \), and \( t>-2q \) by \cref{item:as_exponent}), this sum has exact order the second term in \cref{eqn:theory_expectation_rvsharp} if \( (\al'+s)/(\al+p)< 2 \) and is negligible relative to \( \rho_N \) otherwise. The tail \( \E\cI_1^{>} \) is always bounded above by the second term in \cref{eqn:theory_expectation_rvsharp} by the proof of \cite[Lem. 8.2]{knapik2011bayesian}. After an application of Jensen's inequality, the argument leading to a matching lower bound for \( \E\cI_1 \) is the same as the one above.
\end{proof}

\subsection[Proof for subsection 3.4]{Proof for \cref{sec:theory_probability}}\label{app:proofs_probability}
We follow \cref{app:proofs_expectation} by letting \( u\defeq 2(\al+p)>1 \), $ \avgn{g_j}{g_j} \eqdef \vartheta_j^2 Z_j^{(N)}$, and \( \gamma\equiv 1 \), but instead of \cref{assump:theory_expectation_train} we now enforce \cref{assump:theory_probability_train}, which defines our \( \Lambda \)-subgaussian data. This yields \( g_{jn}=\ip{\varphi_j}{x_n}\in\SG{\sigma_{\nu}^2\vartheta_j^2} \).
Henceforth, let $ \SE{v^2}{a} $ denote the set of real subexponential (SE) r.v.s with parameters $ (v,a)\in \R_{\geq 0}^2$. The inclusion $ X\in \SE{v^2}{a} $ is characterized by the moment generating function (MGF) bound $ \E\exp(\theta (X-\E X))\leq \exp(v^2\theta^2/2) $ for all $ \abs{\theta}<1/a $ \cite{wainwright2019high}. Using \cite[Lem. 2.7.6]{vershynin2018high} gives \( g_{jn}^2/\vartheta_j^2\in\SE{c^2\sigma_{\nu}^4}{c\sigma_{\nu}^2} \) for an absolute constant \( c>0 \). By independence, \( Z_j^{(N)}\in\SE{c^2\sigma_{\nu}^4/N}{c\sigma_{\nu}^2/N} \) \cite[sect. 2.1.3]{wainwright2019high}. The following proof relies on SE concentration from \cref{app:lemmas}.

\begin{proof}[Proof of \cref{thm:upper-lower-prob}]
	We prove the upper and lower concentration bounds separately.
	\paragraph*{Upper bound}
	Fix $ \delta\in(0,1\mmin c\sigma_{\nu}^2) $ and define $ \Ndm\defeq (1-\delta)N $. We follow the disjoint index sets approach from \cref{thm:upper-expect}, except now we sum over $ \{j\in\N\colon  j\leq \Ndm^{1/u}\} $ and $ \{j\in\N\colon  j>\Ndm^{1/u}\} $. Denote these sums by \( \cI_i^{\leq,\delta} \) and \( \cI_i^{>,\delta} \), respectively \cref{eqn:proofs_iall}. We first bound
	\[
	\cI_1^{\leq,\delta}
	\leq 
	\sum\nolimits_{j\leq \Ndm^{1/u}}\dfrac{\vartheta_j'^2\abs{\ld_j}^2}{(1+\Ndm\sigma_j^2\vartheta_j^2)^2}
	\asymp
	\sum\nolimits_{j\leq \Ndm^{1/u}}\dfrac{j^{-2\al'}\abs{\ld_j}^2}{(1+\Ndm j^{-u})^2}\qas N\to\infty
	\]
	with probability (w.p.) at least $ 1-\Ndm^{1/u}\exp(-N\delta^2/(2c^2\sigma_{\nu}^4)) $ by \cref{lem:subexp_bound_dep} (with \( n=N\), $X_j^{(N)}=Z_j^{(N)}$, $v=a=c\sigma_{\nu}^2$, $J=\floor{\Ndm^{1/u}}$, and the lower tail only). The remaining bounds for $ \cI_1 $ (including the almost sure bound for $ \cI_1^{>,\delta} $) are the same as those in the proof of \cref{thm:upper-expect}, except with \( N \) replaced by \( \Ndm \). This gives the second term in \cref{eqn:upper-prob1}.
	
	Following the arguments in the proof of \cref{thm:upper-expect} for \( \E\cI_2^{\leq} \) and by a similar application of \cref{lem:subexp_bound_dep}, we deduce that $ \cI_2^{\leq,\delta} = O(\rho_{\Ndm}) $ w.p. at least $ 1-\Ndm^{1/u}\exp(-N\delta^2/(2c^2\sigma_{\nu}^4)) $. For the infinite tail series \( \cI_2^{>,\delta} \), bounding its denominator by one yields
	\[
	\textstyle
	\cI_2^{>,\delta}\leq \sum\nolimits_{j> \Ndm^{1/u}}N\vartheta_j'^2\vartheta_j^2\sigma_j^4Z_j^{(N)}
	\lesssim
	N(1+\delta)\sum\nolimits_{j> \Ndm^{1/u}}j^{-2(\al'+\al+2p)}\qas N\to\infty
	\]
	w.p. at least $ 1-\exp(-N\delta^2/(2c^2\sigma_{\nu}^4)) $. The second inequality is from \cref{lem:fullseriesbound_dep} (with $n=N$, $X_j^{(N)}=Z_j^{(N)}$, $v=a=c\sigma_{\nu}^2$, $\{w_j=\vartheta_j'^2\vartheta_j^2\sigma_j^4\}$, and the upper tail only), where $ \{\vartheta_j'^2\vartheta_j^2\sigma_j^4\} $ is in \( \ell^1 \) because $ \al'+\al+2p>1 $ by \cref{item:as_exponent}. We deduce \( \cI_2^{>,\delta}=O((1+\delta)/(1-\delta)\Ndm^{-(1-{(\al +1/2 -\al')}/{(\al+p)})}) \) by the same argument used for \( \E\cI_2^{>} \) in the proof of \cref{thm:upper-expect}.
	
	Along similar lines as the proof of \cref{thm:upper-expect}, the posterior covariance term $ \cI_3^{\leq,\delta} $ has the same order as $ \cI_2^{\leq,\delta} $ with the same probability (by \cref{lem:subexp_bound_dep}). The tail $ \cI_3^{>,\delta} $ is bounded above a.s. by the first case in \( \rho_{\Ndm} \) \cref{eqn:theory_expectation_rho_J} as \( N\to\infty \). Since $ a+b(1+\delta)/(1-\delta)\lesssim (1+\delta)/(1-\delta)$ for any $ a,b>0 $, we deduce \( \cI_2+\cI_3 \) has order the first term in \cref{eqn:upper-prob1}. The asserted total probability follows by combining the individual event probabilities with the union bound and the fact that there exists $ c_2(\delta)>0 $ and $ 0<c_3<c'\defeq1/(2c^2\sigma_{\nu}^4) $ such that $ \sup_{n\gtrsim 1}n^{1/u}\exp(-(c'-c_3)n\delta^2)<c_2(\delta) $. The assertion about the upper bound for \( \bar{L}^{(N)} \) follows by ignoring $ \cI_3 $.
	
	\paragraph*{Lower bound}
	Since $ 1+\delta\in(1,2) $ is bounded, we do not track this factor in what follows. The proof proceeds by splitting all series at the critical index $ J_N=\floor{N^{1/u}} $ (since $ (1+\delta)N\simeq N $) as in \cref{thm:upper-expect}. By nonnegativity, we lower bound the error \cref{eqn:proofs_iall} by $ \cI_1^{>}+\cI_2^{\leq} + \cI_3^{\leq} $. The tail term \( \cI_1^{>} \) is bounded below by the second term in \cref{eqn:lower-prob} with high probability by \cref{lem:subexp_bound_dep} and \cref{eqn:knapik_1p12} in \cref{lem:knapik_1}.
	The remaining calculations showing that \( \cI_2^{\leq} \) and \( \cI_3^{\leq} \) are \( \Omega(\rho_N) \) with high probability follow directly from \cref{lem:subexp_bound_dep} and \cref{eqn:knapik_2p2} in \cref{lem:knapik_2} and are omitted. For \( \bar{L}^{(N)} \), the only variance contribution is from $ \cI_2^{\leq} $. Its lower bound has the small pre-factor $ 1-\delta $ as asserted. Combining the individual event probabilities as was done for the upper bound completes the proof of \cref{thm:upper-lower-prob}.
\end{proof}

\subsection[Proof for subsection 3.6]{Proof for \cref{sec:theory_gen_gap}}\label{app:proofs_gen_gap}
This subsection proves \cref{thm:gen_gap_expect} by bounding the generalization gap \( \cG_N\) \cref{eqn:gen_gap}, which only involves in-distribution notions of error. We work in the setting of \cref{app:proofs_expectation}, letting \( u\defeq 2(\al+p)>1 \) and \( \gamma\equiv 1 \) and enforcing \cref{assump:theory_assumptions_main,assump:theory_expectation_train}. Then, the \( L^1_{\P}(\varOmega;\R) \) norm of \( \cG_N \) satisfies $ \EGG=\E^{D_N}\abs{\cJ_1+\cJ_2+\cJ_3} $, where
\begin{equation}\label{eqn:j1j2j3}
	\textstyle
	\frac{1}{2}\sum_{j=1}^{\infty}(\vartheta_j^2-\avgn{g_j}{g_j})\abs[\big]{\bar{l}_{j}^{(N)}-\ld_j}^2,\quad
	\frac{1}{2}\sum_{j=1}^{\infty}(\avgn{g_j}{g_j}-\vartheta_j^2)\abs{\ld_j}^2,\qa
	\sum_{j=1}^{\infty}\avgn{g_j}{\xi_j}\bar{l}_j^{(N)}
\end{equation}
define \( \cJ_1 \), \( \cJ_2 \), and \( \cJ_3 \), respectively. In \cref{eqn:j1j2j3}, the r.v.s \( \{\xi_{jn}\} \) from \cref{eqn:intro_ideas_ip_diagonal} are i.i.d. $\normal(0,1)$.
Using the explicit form \cref{eqn:posterior_sequence} of the posterior mean $ \{\bar{l}_{j}^{(N)}\} $, we find that $ \EGG $ equals
\begin{equation}\label{eqn:gen_gap_expect}
\E\,\abs[\Bigg]{\dfrac{1}{2}\sum_{j=1}^{\infty}(\vartheta_j^2-\avgn{g_j}{g_j})\dfrac{\abs{\ld_j}^2+N\sigma_j^4\avgn{g_j}{g_j}}{(1+N\sigma_j^2\avgn{g_j}{g_j})^2}
	+ \cJ_2
	+ \sum_{j=1}^{\infty}\dfrac{(\avgn{g_j}{\xi_j})^2 +\ld_j\avgn{g_j}{g_j}\avgn{g_j}{\xi_j}}{N^{-1}\sigma_j^{-2}+\avgn{g_j}{g_j}} }\, .
\end{equation}
The following proof and \cref{lem:as_series} imply the convergence of \cref{eqn:j1j2j3} $ \P $-a.s. and \cref{eqn:gen_gap_expect}.

\begin{proof}[Proof of \cref{thm:gen_gap_expect}]
	We prove the upper and lower bounds on \( \EGG \) separately.
	\paragraph*{Upper bound}
	By the triangle inequality, \cref{eqn:gen_gap_expect} is bounded above by five terms $ G_i $ for $ i\in\{1,\ldots,5\} $. Here, $ \{G_1, G_2\} $ corresponds to $ \E\abs{\cJ_1} $, $ G_3 $ to $ \E\abs{\cJ_2} $, and $ \{G_4,G_5\} $ to $ \E\abs{\cJ_3} $. 
	
	By triangle and Jensen's inequality, $ G_3=\E\abs{\cJ_2} \leq \tfrac{1}{2}\sum_{j=1}^{\infty}\abs{\ld_j}^2(\Var[\avgn{g_j}{g_j}])^{1/2}$. Independence of \( \{x_n\} \) yields \( \Var[\avgn{g_j}{g_j}] \leq \frac{1}{N}\E^{x\sim\nu}\ip{\varphi_j}{x}^4\). Using \cref{eqn:setup_bayes_gjn,assump:theory_expectation_train} (\( \{\zeta_j\} \) are zero mean, unit variance, and independent), \( \E^{x\sim\nu}\ip{\varphi_j}{x}^4\simeq \sum_k c_{jk}^4\E\zeta_k^4+\sum_{k'\neq k}c_{jk}^2c_{jk'}^2 \), where \( c_{jk}\defeq\ip{\Lambda^{1/2}\varphi_j}{\phi_k} \). The second term is bounded above by a constant times \( (\sum_kc_{jk}^2)^2=\vartheta_j^4 \) and so is the first term (using \( \limsup_{j\to\infty}\E\zeta_j^4<\infty \) and \( \ell^2\subset \ell^4 \)). Thus, \( G_3 \lesssim \norm{\Ld}_{L^2_{\nu}(H;H)}^2\, N^{-1/2}\).
	
	Using the disjoint index sets approach from the proof of \cref{thm:upper-expect}, $G_1^{\leq}$ is bounded above by \( \frac{1}{2}\sum_{j\leq N^{1/u}}(N\sigma_j^2)^{-2}\abs{\ld_j}^2\E[\abs{\vartheta_j^2-\avgn{g_j}{g_j}}(\avgn{g_j}{g_j})^{-2}] \). By the Cauchy--Schwarz inequality and \cref{assump:theory_expectation_train}, the expectation on the right is bounded above by \( (\Var[\avgn{g_j}{g_j}])^{1/2}\vartheta_j^{-4} \) for sufficiently large \( N \). It follows that \( G_1^{\leq} \) is of the order \( N^{-1/2} \) times the rightmost sum in \cref{eqn:proofs_expectation_i1upper} with \( \al'=\al \), which all together is \( o(N^{-1/2}) \). This contribution is negligible relative to \( G_3 \). A similar argument shows that the tail sum $ G_1^{>} $ is never bigger than $ G_1^{\leq} $.
	
	The other term associated with $ \cJ_1 $, which is \( G_2 \), satisfies $ G_2^{\leq}\leq \frac{1}{2}\sum_{j\leq N^{1/u}}N^{-1}\E[\abs{\vartheta_j^2-\avgn{g_j}{g_j}}(\avgn{g_j}{g_j})^{-1}] =O(N^{-1/2}N^{-(1-1/u)})=o(N^{-1/2})$ (since \( u>1 \)) by an argument similar to the one used for \( G_1 \). The Cauchy--Schwarz inequality and the variance bound used for \( G_1 \) yields \( G_2^{>} \leq \frac{1}{2}\sum_{j> N^{1/u}}N\sigma_j^4\E[\abs{\vartheta_j^2-\avgn{g_j}{g_j}}\,\avgn{g_j}{g_j}]\lesssim N^{-1/2}\sum_{j>N^{1/u}}N\sigma_j^4\vartheta_j^4\). The last sum is asymptotic to \( N^{-1/2}\sum_{j>N^{1/u}}Nj^{-2u}\) as \( N\to\infty \), which is the same order as \( G_2^{\leq} \) by \cref{eqn:knapik_2p1} in \cref{lem:knapik_2} (with $ t=2u>1 $). Thus, \( G_2 \) is also negligible relative to \( G_3 \).
	
	Moving on to $ G_4 $ from $ \E\abs{\cJ_3} $, we first average out the noise $ \{\xi_{jn}\} $ to obtain
	\[
	G_4=\E\sum_{j=1}^{\infty}\dfrac{(\avgn{g_j}{\xi_j})^2}{N^{-1}\sigma_j^{-2}+\avgn{g_j}{g_j}}
	= \E^{X}\sum_{j=1}^{\infty}\dfrac{N\sigma_j^2\E^{Y\condbar X}\bigl[(\avgn{g_j}{\xi_j})^2\bigr]}{1+N\sigma_j^2\avgn{g_j}{g_j}}
	=\E^{X}\sum_{j=1}^{\infty}\dfrac{\sigma_j^2\avgn{g_j}{g_j}}{1+N\sigma_j^2\avgn{g_j}{g_j}}\,.
	\]
	Since the map $ r\mapsto r(1+ar)^{-1} $ is concave on $[0,\infty) $ for all $ a\geq 0 $, Jensen's inequality yields \( G_4\lesssim \sum_{j=1}^{\infty}j^{-u}/(1+Nj^{-u})
	=
	O(N^{-(\al+p-1/2)/(\al+p)}) \) as \( N\to\infty \) by \cref{eqn:knapik_2p2} in \cref{lem:knapik_2} (with $ t=u>1 $ and $ v=1 $, satisfying the first case).
	
	Last, Jensen's inequality applied to the entire series \( G_5 \) from \( \E\abs{\cJ_3} \) yields
	\[
	G_5
	\leq
	\biggl(\E^{X}\E^{Y\condbar X}\abs[\Bigg]{\sum_{j=1}^{\infty}\dfrac{N\sigma_j^{2}\ld_j\avgn{g_j}{g_j}\avgn{g_j}{\xi_j}}{1+N\sigma_j^{2}\avgn{g_j}{g_j}} }^{2}\ \biggr)^{1/2}
	=
	\biggl(\E^X\sum_{j=1}^{\infty}\dfrac{N\abs{\ld_j}^2\sigma_j^4(\avgn{g_j}{g_j})^3}{(1+N\sigma_j^{2}\avgn{g_j}{g_j})^2}\biggr)^{1/2}
	\]
	because \( 	\E^{Y\condbar X}[(\avgn{g_j}{\xi_j})(\avgn{g_{j'}}{\xi_{j'}})]=\tfrac{1}{N^2}\sum_{n,n'\leq N}g_{jn}g_{j'n'}\E[\xi_{jn}\xi_{j'n'}]=\tfrac{\delta_{jj'}}{N}\bigl(\tfrac{1}{N}\sum_{n=1}^{N}g_{jn}g_{j'n}\bigr) \)
	for any $ j$ and $ j'\in\N $. Thus, \( G_5\leq (\sum_{j=1}^{\infty}N^{-1}\abs{\ld_j}^2\vartheta_j^2)^{1/2} = \norm{\Ld}_{L^2_{\nu}(H;H)}^2\, N^{-1/2}\). Comparing each \( \{G_i\}_{i=1,\ldots, 5} \), we conclude that $ \EGG=O(N^{-1/2} + G_4) $ as \( N\to\infty \) as asserted.

	\paragraph*{Lower bound}
	By the triangle inequality, 
	\(
	\EGG\geq \E\abs{\cJ_3+\cJ_2}-\E\abs{\cJ_1}\geq \abs{\E \cJ_3+\E \cJ_2}-\E\abs{\cJ_1} = \abs{\E \cJ_3}-\E\abs{\cJ_1}\, .
	\)
	We first develop a lower bound on $ \abs{\E \cJ_3} $, which equals \( G_4 \) by the zero mean property of the \( \{\xi_{jn}\} \). By an argument similar to the one used to lower bound \( \E\cI_2 \) in the proof of \cref{thm:lower-expect}, $ \abs{\E \cJ_3}\gtrsim\tau_N \sum_{j\leq N^{1/u}}j^{-u}/(1+Nj^{-u})
	=
	\Omega(\tau_NN^{-(1-1/u)}) $ as \( N\to\infty \) for any positive $ \tau_N\to 0 $. This is the asserted lower bound in \cref{eqn:thm_gen_gap-expect-lower}. To conclude the proof, we claim that the upper bounds previously developed for $ \E\abs{\cJ_1} $ (i.e., for \( G_1 \) and \( G_2 \)) are asymptotically negligible relative to \( \tau_NN^{-(1-1/u)} \) under the hypotheses. 
	Enforcing $ \tau_N \gg N^{-1/2} $ ensures that this is true for the \( G_2 \) bound. By \cref{eqn:proofs_expectation_i1upper}, if \( (\al+s)/(\al+p)\geq 2 \), then the \( G_1 \) contribution is \( N^{-1/2}N^{-2}\ll \tau_NN^{-(1-1/u)} \). Otherwise, \( G_1 \) is strictly smaller than \( N^{-1/2}N^{-(\al+s)/(\al+p)} \). This term is negligible relative to the \( \abs{\E \cJ_3} \) contribution if \( \tau_N\gg N^{-(1+\al+2s-p)/(2\al+2p)}\to 0 \), which requires $ p<1+\al+2s $ as assumed in the hypotheses.
\end{proof}

\section{Supporting lemmas}\label{app:lemmas}
Our first two results, which are variations of \cite[Lems. 8.1--8.2]{knapik2011bayesian}, develop sharp asymptotics for certain series that arise from \( \postseq \) in \cref{eqn:posterior_sequence}.
\begin{lemma}[series asymptotics: Sobolev regularity]\label{lem:knapik_1}
	Let $ q\in\R $, $ t\geq -2q $, $ u>0 $, and $ v\geq 0 $. Then for every $ \xi\in\cH^{q}(\N;\R) $, it holds that
	\begin{equation}\label{eqn:knapik_1p12}
	\sum\nolimits_{j> N^{1/u}}\dfrac{j^{-t}\xi_j^2}{\left(1+Nj^{-u}\right)^{v}}
	\simeq
	\sum\nolimits_{j>N^{1/u}}j^{-t}\xi_j^2
	\leq
	N^{-\left(\frac{t+2q}{u}\right)}
	\Bigl(\sum\nolimits_{j> N^{1/u}}j^{2q}\xi_j^2\Bigr)
	\end{equation}
	for all $ N\in\N $. Additionally, for every fixed $ \xi\in\cH^{q}(\N;\R) $, it holds that
	\begin{equation}\label{eqn:knapik_1p2}
	\sum_{j\leq N^{1/u}}\dfrac{j^{-t}\xi_j^2}{\left(1+Nj^{-u}\right)^{v}} = 
	\begin{cases}
	o\bigl(N^{-\left(\frac{t+2q}{u}\right)}\bigr)\, , & \text{if }\, (t+2q)/u<v\,,\\
	N^{-v}\,\norm{\xi}_{\cH^{(uv-t)/2}}^2\,\bigl(1+o(1)\bigr) \, , & \text{if }\,(t+2q)/u\geq v
	\end{cases}
	\end{equation}
	as $ N\to\infty $. The previous assertion \cref{eqn:knapik_1p2} remains valid for the full infinite series.
\end{lemma}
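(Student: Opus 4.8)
The plan is to prove the three displayed asymptotic estimates by reducing everything to elementary estimates on sums of the form $\sum j^{-a}$ and, crucially, to a dominated-convergence argument for the delicate case. First I would establish the tail bound \cref{eqn:knapik_1p12}. The middle equivalence $\sum_{j>N^{1/u}} j^{-t}\xi_j^2/(1+Nj^{-u})^v \simeq \sum_{j>N^{1/u}} j^{-t}\xi_j^2$ is immediate because for $j > N^{1/u}$ we have $Nj^{-u} < 1$, hence $1 \le (1+Nj^{-u})^v \le 2^v$, so the two sums differ by a bounded factor. For the inequality, I would factor $j^{-t} = j^{-t-2q}\cdot j^{2q}$ and use $j^{-t-2q} \le N^{-(t+2q)/u}$ on the range $j > N^{1/u}$ (valid since $t + 2q \ge 0$), pulling that constant out of the sum to leave $\sum_{j>N^{1/u}} j^{2q}\xi_j^2$, which is finite (indeed $o(1)$) since $\xi \in \cH^q$.

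Next I would treat \cref{eqn:knapik_1p2} for the head sum $\sum_{j \le N^{1/u}}$. Write $a_j(N) \defeq j^{-t}\xi_j^2/(1+Nj^{-u})^v$. The key substitution is to rewrite the denominator: for $j \le N^{1/u}$ one has $1 + Nj^{-u} \simeq Nj^{-u}$, so $a_j(N) \simeq N^{-v} j^{uv-t}\xi_j^2$. In the case $(t+2q)/u \ge v$, i.e.\ $uv - t \le 2q$, the weights $j^{uv-t}\xi_j^2$ are summable with sum $\norm{\xi}_{\cH^{(uv-t)/2}}^2 < \infty$, and I would show the sum converges to $N^{-v}\norm{\xi}_{\cH^{(uv-t)/2}}^2(1+o(1))$ by splitting at a slowly growing cutoff $K_N \to \infty$ with $K_N = o(N^{1/u})$: on $j \le K_N$ the ratio $(1+Nj^{-u})^v/(Nj^{-u})^v \to 1$ uniformly, giving the main term $N^{-v}\sum_{j\le K_N} j^{uv-t}\xi_j^2 = N^{-v}\norm{\xi}^2_{\cH^{(uv-t)/2}}(1+o(1))$, while the remaining block $K_N < j \le N^{1/u}$ is bounded by $N^{-v}\sum_{j > K_N} j^{uv-t}\xi_j^2 = N^{-v} o(1)$ by the tail of a convergent series. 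For the complementary case $(t+2q)/u < v$, i.e.\ $uv - t > 2q$, the weights are \emph{not} summable, and I would instead bound directly: $\sum_{j\le N^{1/u}} a_j(N) \le N^{-v}\sum_{j\le N^{1/u}} j^{uv-t}\xi_j^2 = N^{-v}\sum_{j\le N^{1/u}} j^{uv-t-2q}\cdot j^{2q}\xi_j^2$. Since $uv-t-2q>0$, bound $j^{uv-t-2q} \le N^{(uv-t-2q)/u}$ and factor it out, leaving $N^{-v}N^{(uv-t-2q)/u}\sum_{j\le N^{1/u}} j^{2q}\xi_j^2 = N^{-(t+2q)/u}\sum_{j\le N^{1/u}} j^{2q}\xi_j^2$; but this only gives $O$, not $o$. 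To upgrade to $o(N^{-(t+2q)/u})$ I would again split at $K_N\to\infty$: the head $j\le K_N$ contributes at most $N^{-v}K_N^{uv-t}\norm{\xi}_{\cH^q}^2$, which is $o(N^{-(t+2q)/u})$ provided $K_N$ grows slowly enough (e.g.\ $K_N = N^{\epsilon}$ with $\epsilon$ small), while the block $K_N<j\le N^{1/u}$ gives $N^{-(t+2q)/u}\sum_{j>K_N}j^{2q}\xi_j^2 = N^{-(t+2q)/u}o(1)$ by convergence of $\sum j^{2q}\xi_j^2$. Balancing the two contributions (choosing $K_N$ so both are $o(N^{-(t+2q)/u})$) closes this case. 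Finally, the remark that \cref{eqn:knapik_1p2} holds for the full infinite series follows by adding the tail estimate \cref{eqn:knapik_1p12}, which is $O(N^{-(t+2q)/u})\cdot o(1)$ and hence absorbed in both cases.

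The main obstacle I anticipate is the non-summable subcase $(t+2q)/u < v$ of \cref{eqn:knapik_1p2}: getting $o$ rather than merely $O$ requires carefully exploiting that $\xi$ is \emph{fixed} (so tails of $\sum j^{2q}\xi_j^2$ vanish) together with a correctly tuned intermediate cutoff $K_N$, and one must check the two error contributions can be simultaneously made $o(N^{-(t+2q)/u})$ — this is where the precise growth rate of $K_N$ matters and where a naive single split fails. The summable subcase is comparatively routine dominated convergence, and the tail bound \cref{eqn:knapik_1p12} is essentially immediate from the denominator being bounded below by $1$.
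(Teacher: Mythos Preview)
Your plan is correct and is essentially the standard argument; the paper itself does not give a self-contained proof but simply defers to \cite[Lem.~8.1]{knapik2011bayesian}, whose proof proceeds exactly along the lines you sketch (splitting at $N^{1/u}$, using $1+Nj^{-u}\simeq Nj^{-u}$ on the head, and exploiting that tails of $\sum j^{2q}\xi_j^2$ vanish for fixed $\xi\in\cH^q$). One small slip to fix when you write it up: in the subcase $(t+2q)/u<v$, your head bound ``at most $N^{-v}K_N^{uv-t}\norm{\xi}_{\cH^q}^2$'' should read $N^{-v}K_N^{uv-t-2q}\norm{\xi}_{\cH^q}^2$, obtained by writing $j^{uv-t}=j^{uv-t-2q}\cdot j^{2q}$ and bounding $j^{uv-t-2q}\le K_N^{uv-t-2q}$ (valid since $uv-t-2q>0$); as written your bound uses $\sum_{j}\xi_j^2$, which need not be finite when $q<0$. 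With this correction the two error pieces are $o(N^{-(t+2q)/u})$ for any choice $K_N\to\infty$ with $K_N=o(N^{1/u})$, so the balancing is straightforward.
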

\begin{proof}
	The claims follow from \cite[Lem. 8.1, p. 2653]{knapik2011bayesian} and its proof therein.
\end{proof}

\begin{lemma}[series asymptotics: sharp]\label{lem:knapik_2}
	Let $ t>1 $, $ u>0 $, and $ v\geq 0 $. Then as $ N\to\infty $,
	\begin{subequations}
		\begin{align}
			\sum\nolimits_{j>N^{1/u}}\dfrac{j^{-t}}{\left(1+Nj^{-u}\right)^{v}}\simeq\sum\nolimits_{j>N^{1/u}}j^{-t} &=\Theta\bigl(N^{-\left(\frac{t-1}{u}\right)}\bigr)\qa \label{eqn:knapik_2p1}\\
			\sum_{j=1}^{\infty}\dfrac{j^{-t}}{\left(1+Nj^{-u}\right)^{v}}\asymp
			\sum_{j\leq N^{1/u}}\dfrac{j^{-t}}{\left(1+Nj^{-u}\right)^{v}}
			&=
			\begin{cases}
			\Theta(N^{-\left(\frac{t-1}{u}\right)})\, , & \text{if }\, (t-1)/u<v\,,\\
			\Theta(N^{-v}\log N)\, , &\text{if }\,(t-1)/u=v\,,\\
			\Theta(N^{-v})\, , &\text{if }\,(t-1)/u>v\, .
			\end{cases}\label{eqn:knapik_2p2}
		\end{align}
	\end{subequations}
\end{lemma}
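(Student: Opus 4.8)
The plan is to reduce \cref{lem:knapik_2} to elementary power-sum estimates via the two regimes $1+Nj^{-u}\simeq 1$ for $j>N^{1/u}$ and $1+Nj^{-u}\simeq Nj^{-u}$ for $j\le N^{1/u}$; no probabilistic content enters, so this is a deterministic companion to \cite[Lem.~8.1]{knapik2011bayesian}, and indeed all three displayed estimates follow from standard integral comparison for monotone power summands.

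First I would prove \cref{eqn:knapik_2p1}. For every integer $j>N^{1/u}$ we have $0\le Nj^{-u}<1$, hence $1\le (1+Nj^{-u})^{v}<2^{v}$, which immediately gives the two-sided comparison of the weighted tail sum with $\sum_{j>N^{1/u}}j^{-t}$ claimed on the left. For the remaining equality, since $t>1$ the summand $x\mapsto x^{-t}$ is positive and decreasing, so comparing the sum over $\{j>\floor{N^{1/u}}\}$ with $\int_{\floor{N^{1/u}}}^{\infty}x^{-t}\,dx$ and using $\floor{N^{1/u}}\simeq N^{1/u}$ as $N\to\infty$ yields $\sum_{j>N^{1/u}}j^{-t}=\Theta\bigl((N^{1/u})^{1-t}\bigr)=\Theta(N^{-(t-1)/u})$.

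Next I would establish the three-case formula for the partial sum in \cref{eqn:knapik_2p2}. For $1\le j\le N^{1/u}$ one has $Nj^{-u}\ge 1$, so $(1+Nj^{-u})^{v}\simeq N^{v}j^{-uv}$ (using $v\ge 0$), and therefore $\sum_{j\le N^{1/u}}j^{-t}(1+Nj^{-u})^{-v}\simeq N^{-v}\sum_{j\le N^{1/u}}j^{\,uv-t}$. Writing $w\defeq uv-t$ and $M\defeq\floor{N^{1/u}}$, integral comparison for $\sum_{j\le M}j^{w}$ splits into three regimes: (i) $w<-1$, i.e.\ $(t-1)/u>v$, where $\sum_{j\le M}j^{w}=\Theta(1)$, giving partial sum $\Theta(N^{-v})$; (ii) $w=-1$, i.e.\ $(t-1)/u=v$, where $\sum_{j\le M}j^{w}=\Theta(\log M)=\Theta(\log N)$, giving $\Theta(N^{-v}\log N)$; and (iii) $w>-1$, i.e.\ $(t-1)/u<v$, where $\sum_{j\le M}j^{w}=\Theta(M^{w+1})=\Theta(N^{v-(t-1)/u})$, giving $\Theta(N^{-(t-1)/u})$.

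Finally, for the first $\asymp$ in \cref{eqn:knapik_2p2} I would add to the partial sum the tail bound $\Theta(N^{-(t-1)/u})$ from \cref{eqn:knapik_2p1}: in case (iii) the tail has exactly the order of the partial sum; in cases (i) and (ii) the tail is $O(N^{-(t-1)/u})$, which is $o(N^{-v})$, respectively $o(N^{-v}\log N)$, since there $(t-1)/u\ge v$, hence negligible. Thus $\sum_{j\ge 1}=\sum_{j\le N^{1/u}}+\sum_{j>N^{1/u}}$ always inherits the order of the partial sum, completing \cref{eqn:knapik_2p2}. The only step demanding care is the borderline exponent $(t-1)/u=v$, where one must pin the harmonic-type inner sum between matching constant multiples of $\log N$ and verify that the polynomial tail is genuinely of lower order; the remaining regimes are routine bookkeeping.
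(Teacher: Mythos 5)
Your proof is correct, and it is a complete, self-contained version of what the paper simply delegates to a citation. The paper's own ``proof'' consists of invoking \cite[pp.~2654--2655]{knapik2011bayesian} specialized to a constant slowly varying function, $q=-1/2$, and the harmonic-sum estimate $\sum_{j\le J}1/j\asymp\log J$; you instead reconstruct the argument from scratch via the two regimes $1+Nj^{-u}\simeq 1$ on the tail $j>N^{1/u}$ and $1+Nj^{-u}\simeq Nj^{-u}$ on the head $j\le N^{1/u}$, reduce to power sums $\sum_{j\le M}j^{w}$, and settle the three exponent cases by integral comparison. The decomposition into head and tail, the identification of $w=uv-t$, and the check that the tail contribution from \cref{eqn:knapik_2p1} is either matching (case $(t-1)/u<v$) or lower order (the other two cases) are all handled correctly; in particular, the borderline case $w=-1$ is treated properly with $\log M=\Theta(\log N)$ since $M\simeq N^{1/u}$. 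Your explicit computation buys transparency at no cost, whereas the paper's citation buys brevity but leaves the reader to unwind the regularly-varying formulation in \cite{knapik2011bayesian}; mathematically the content is the same.
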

\begin{proof}
	The claims follow from \cite[pp. 2654--2655]{knapik2011bayesian}. Choose the slowly varying function used there to be identically constant, $ q=-1/2 $, and use the fact that $ \sum_{j=1}^{J}1/j \asymp \log J $.
\end{proof}

The next lemma justifies the a.s. convergence of various random series in our proofs.
\begin{lemma}[almost sure convergence of series]\label{lem:as_series}
	Let $ \{X_j\}_{j\geq 1} $ be a sequence of (possibly dependent) real r.v.s. If $ \sum_{j=1}^{\infty}\E\abs{X_j}<\infty $, then $ \sum_{j=1}^{J}X_j \xrightarrow{\mathrm{a.s.}} \sum_{j=1}^{\infty}X_j $ as $ J\to\infty $.
\end{lemma}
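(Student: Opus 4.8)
The plan is to reduce the claim to the elementary fact that an absolutely convergent series of real numbers converges, by first establishing almost-sure absolute summability via the Tonelli theorem. First I would introduce the $[0,\infty]$-valued random variable $S\defeq\sum_{j=1}^{\infty}\abs{X_j}$, defined as the monotone pointwise limit of the partial sums $S_J\defeq\sum_{j=1}^{J}\abs{X_j}$; this $S$ is measurable, being a countable supremum of measurable functions. By the monotone convergence theorem (equivalently, Tonelli for the product of counting measure on $\N$ and $\P$ on $(\varOmega,\cF,\P)$) one has $\E[S]=\lim_{J\to\infty}\E[S_J]=\sum_{j=1}^{\infty}\E\abs{X_j}$, which is finite by hypothesis. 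Hence $S<\infty$ on an event $\varOmega_0\defeq\{S<\infty\}$ with $\P(\varOmega_0)=1$.

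Next I would fix $\omega\in\varOmega_0$ and argue pointwise: since $\sum_{j=1}^{\infty}\abs{X_j(\omega)}<\infty$, the tails $\sum_{j>J}\abs{X_j(\omega)}$ tend to $0$ as $J\to\infty$, so for $J'>J$ the estimate $\abs[\big]{\sum_{j=1}^{J'}X_j(\omega)-\sum_{j=1}^{J}X_j(\omega)}\leq\sum_{j>J}\abs{X_j(\omega)}$ shows that the partial sums $\sum_{j=1}^{J}X_j(\omega)$ form a Cauchy sequence in $\R$, hence converge to a finite limit which we denote $\sum_{j=1}^{\infty}X_j(\omega)$. Therefore $\sum_{j=1}^{J}X_j\xrightarrow{\mathrm{a.s.}}\sum_{j=1}^{\infty}X_j$ as $J\to\infty$, as claimed. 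I would also record the companion observation that $\abs[\big]{\sum_{j=1}^{\infty}X_j}\leq S$ with $\E[S]<\infty$, so the limit random variable is integrable; this is the form in which the lemma is actually invoked (to legitimize interchanging expectation with the infinite summations appearing in \cref{eqn:proofs_iall} and \cref{eqn:gen_gap_expect}).

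There is essentially no substantive obstacle: the only points that merit (brief) care are the measurability of $S$ and of the limit, and the interchange of expectation with the infinite sum of nonnegative terms — precisely the Fubini--Tonelli step that the preliminaries already declare will be used implicitly throughout. If one wishes to avoid even naming Tonelli, the same conclusion follows from the first Borel--Cantelli lemma applied to the events $\{\abs{X_j}>\epsilon_j\}$ for a summable sequence $\epsilon_j$ with $\sum_j\E\abs{X_j}/\epsilon_j<\infty$, but the monotone-convergence route is cleaner and is the one I would present.
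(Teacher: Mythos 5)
Your proof is correct and follows the same route as the paper's one-line argument: monotone convergence (Tonelli) gives $\E\sum_j\abs{X_j}=\sum_j\E\abs{X_j}<\infty$, hence $\sum_j\abs{X_j}<\infty$ a.s., and pointwise absolute convergence then yields the claimed a.s.\ limit. You simply spell out the measurability and Cauchy details that the paper leaves implicit.
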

\begin{proof}
	An application of monotone convergence shows that $ \sum_{j}\abs{X_j} $ converges a.s.\,.
\end{proof}

We now turn to some useful concentration inequalities for subexponential r.v.s.
\begin{lemma}[subexponential: union]\label{lem:subexp_bound_dep}
	For \( n\in\N \), let $ \{X_j^{(n)}\}_{j\geq 1} $ be a (possibly dependent) family of unit mean \( \SE{v^2/n}{a/n} \) r.v.s. Fix $\delta \in (0,\min\{1,v^2/a\})$ and \( J\in\N \). Then with probability at 
	least $ 1-2J\exp(-n\delta^2/(2v^2)) $, it holds that $ (1-\delta)\leq X_j^{(n)} \leq (1+\delta)$ for all $j\leq J $.
\end{lemma}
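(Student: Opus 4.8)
The plan is to bound, for each fixed $j\leq J$, the probability that $X_j^{(n)}$ deviates from its mean by more than $\delta$, and then apply a union bound over the $J$ indices. Since we only need a statement about a finite collection $\{X_j^{(n)}\colon j\leq J\}$, no independence or joint structure is required—the union bound handles the dependence for free. Concretely, I would fix $j$ and apply the standard Chernoff/Bernstein-type tail bound for a $\SE{v^2/n}{a/n}$ random variable: for a centered r.v. $Z\in\SE{\tilde v^2}{\tilde a}$ one has $\P(|Z|\geq t)\leq 2\exp\bigl(-\tfrac{1}{2}\min\{t^2/\tilde v^2,\, t/\tilde a\}\bigr)$ for all $t\geq 0$ (this is the classical subexponential deviation inequality, e.g. \cite[Prop. 2.9]{wainwright2019high}). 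Here $Z=X_j^{(n)}-1$, $\tilde v^2=v^2/n$, $\tilde a=a/n$, and $t=\delta$.

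The key computation is to check which branch of the $\min$ is active. We have $t/\tilde v^2 = \delta n/v^2$ and $t/\tilde a = \delta n/a$, so $\min\{t^2/\tilde v^2, t/\tilde a\} = n\min\{\delta^2/v^2,\, \delta/a\}$. Because $\delta < v^2/a$ by hypothesis, we get $\delta/a > \delta^2/v^2$, hence the minimum equals $\delta^2/v^2$ and the tail bound reads $\P(|X_j^{(n)}-1|\geq \delta)\leq 2\exp(-n\delta^2/(2v^2))$. Summing this over $j=1,\ldots,J$ via the union bound gives $\P\bigl(\exists\, j\leq J\colon |X_j^{(n)}-1|\geq\delta\bigr)\leq 2J\exp(-n\delta^2/(2v^2))$, so with probability at least $1-2J\exp(-n\delta^2/(2v^2))$ we have $|X_j^{(n)}-1|<\delta$, i.e. $1-\delta \leq X_j^{(n)}\leq 1+\delta$, simultaneously for all $j\leq J$. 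The condition $\delta<1$ is only used so that the lower bound $1-\delta$ is positive (which is what matters when these $X_j^{(n)}$ later appear in denominators in the proofs of \cref{thm:upper-lower-prob}).

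There is essentially no obstacle here—the lemma is a routine packaging of a textbook subexponential tail bound together with a union bound; the only mild subtlety is verifying that the hypothesis $\delta<v^2/a$ is exactly what forces the Gaussian (rather than exponential) branch of the two-sided subexponential inequality, which is why the exponent comes out as $n\delta^2/(2v^2)$ rather than $n\delta/(2a)$. I would state the two-sided subexponential tail bound as the one cited external fact, then present the one-line branch check and the union bound, and conclude.

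\begin{proof}
Fix $j\in\{1,\ldots,J\}$ and set $Z\defeq X_j^{(n)}-1$, which is centered and lies in $\SE{v^2/n}{a/n}$. By the standard two-sided deviation inequality for subexponential random variables \cite[Prop. 2.9]{wainwright2019high}, for every $t\geq 0$,
\begin{equation*}
\P(\abs{Z}\geq t)\leq 2\exp\Bigl(-\tfrac{1}{2}\min\bigl\{\tfrac{t^2 n}{v^2},\,\tfrac{t n}{a}\bigr\}\Bigr)\,.
\end{equation*}
Apply this with $t=\delta$. Since $0<\delta<v^2/a$ by hypothesis, we have $\delta/a>\delta^2/v^2$, hence $\min\{\delta^2 n/v^2,\,\delta n/a\}=\delta^2 n/v^2$, and therefore
\begin{equation*}
\P\bigl(\abs{X_j^{(n)}-1}\geq\delta\bigr)\leq 2\exp\bigl(-n\delta^2/(2v^2)\bigr)\,.
\end{equation*}
Taking a union bound over $j\in\{1,\ldots,J\}$ yields
\begin{equation*}
\P\bigl(\exists\,j\leq J\colon\abs{X_j^{(n)}-1}\geq\delta\bigr)\leq 2J\exp\bigl(-n\delta^2/(2v^2)\bigr)\,.
\end{equation*}
On the complementary event, which has probability at least $1-2J\exp(-n\delta^2/(2v^2))$, we have $\abs{X_j^{(n)}-1}<\delta$ for all $j\leq J$, i.e. $1-\delta\leq X_j^{(n)}\leq 1+\delta$ for all $j\leq J$. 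Since $\delta<1$, the lower bound $1-\delta$ is strictly positive. This completes the proof.
\end{proof}
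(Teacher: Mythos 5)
Your proof is correct and is exactly the argument the paper intends: the paper's proof consists of the single sentence ``The result follows from application of the union bound to [Prop. 2.9, Wainwright],'' and you have simply spelled out the branch check $\delta < v^2/a \Rightarrow \delta^2/v^2 < \delta/a$ that puts the tail bound in the Gaussian regime, followed by the union bound over $j\leq J$. Nothing differs in substance; you have just filled in the details the paper elides.
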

\begin{proof}
	The result follows from application of the union bound to \cite[Prop. 2.9]{wainwright2019high}.
\end{proof}

To develop tighter concentration for subexponential series, we need the next two lemmas.
\begin{lemma}[subexponential: closure under addition]\label{lem:subexp_add}
	Let $ J\in\N $. If $ \{X_j\}_{j=1,\ldots, J} $ are (possibly dependent) real-valued r.v.s such that $ X_j\in \SE{v_j^2}{a_j} $ for every \( j\in\{1,\ldots,J\} \), then
	\begin{equation}\label{eqn:subexp_add}
	\textstyle
	\sum_{j=1}^{J} X_j \in \SE[\big]{(\sum_{j=1}^{J}v_j)^2}{(\sum_{j=1}^{J}v_j)\max_{1\leq i\leq J}\frac{a_i}{v_i}}\, .
	\end{equation}
\end{lemma}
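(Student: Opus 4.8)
The plan is to verify the moment generating function (MGF) characterization of subexponentiality directly. Write $S\defeq\sum_{j=1}^{J}X_j$, $V\defeq\sum_{j=1}^{J}v_j$, and $a\defeq V\max_{1\le i\le J}(a_i/v_i)$; the goal is then to show $\E\exp(\theta(S-\E S))\le\exp(V^2\theta^2/2)$ for every $\theta\in\R$ with $|\theta|<1/a$. A preliminary reduction lets us assume $v_j>0$ for all $j$: if $v_j=0$ then the centered MGF of $X_j$ is both $\ge 1$ (Jensen) and $\le 1$ (hypothesis), so $X_j$ is a.s. constant and may be deleted without changing $S-\E S$; if every $v_j=0$ the conclusion is vacuous. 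The substantive case thus has all $p_j\defeq V/v_j\in(1,\infty)$ well defined, with $\sum_{j=1}^{J}1/p_j=\sum_{j=1}^{J}v_j/V=1$.

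The first real step is to apply H\"older's inequality with the conjugate exponents $\{p_j\}$ to decouple the (possibly dependent) factors:
\[
\E\exp\bigl(\theta(S-\E S)\bigr)=\E\prod_{j=1}^{J}\exp\bigl(\theta(X_j-\E X_j)\bigr)\le\prod_{j=1}^{J}\Bigl(\E\exp\bigl(p_j\theta(X_j-\E X_j)\bigr)\Bigr)^{1/p_j}\,.
\]
The second step is a bookkeeping check that the inflated exponents stay in the admissible windows: for index $j$ the exponent $p_j\theta=(V/v_j)\theta$ satisfies $|p_j\theta|<1/a_j$ iff $|\theta|<v_j/(Va_j)=1/\bigl(V(a_j/v_j)\bigr)$, which holds simultaneously for all $j$ precisely because $|\theta|<1/a=1/\bigl(V\max_i(a_i/v_i)\bigr)$. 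The third step is to invoke $X_j\in\SE{v_j^2}{a_j}$ for each term and collapse the parameters using $v_j^2p_j^2=v_j^2(V/v_j)^2=V^2$:
\[
\E\exp\bigl(p_j\theta(X_j-\E X_j)\bigr)\le\exp\bigl(\tfrac{1}{2}v_j^2p_j^2\theta^2\bigr)=\exp\bigl(\tfrac{1}{2}V^2\theta^2\bigr)\,,
\]
so that raising to the power $1/p_j$ and multiplying over $j$ yields $\exp\bigl(\tfrac{1}{2}V^2\theta^2\sum_j 1/p_j\bigr)=\exp\bigl(\tfrac{1}{2}V^2\theta^2\bigr)$, which is exactly \cref{eqn:subexp_add}. (The boundary case $\max_i(a_i/v_i)=0$, i.e.\ all $a_i=0$, is the same computation carried out for every $\theta\in\R$.)

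The one point on which the argument genuinely hinges — the ``main obstacle'' — is the choice of H\"older weights proportional to the $v_j$: this is what makes each variance-proxy contribution $v_j^2p_j^2$ collapse to the common value $V^2$, and simultaneously makes the admissible window for $\theta$ governed by $\max_i(a_i/v_i)$ rather than by something larger. Any other allocation of weights would either spoil the first property or shrink the window. Everything else — the degenerate $v_j=0$ reduction, the identity $\sum_j 1/p_j=1$, and the elementary MGF manipulations — is routine.
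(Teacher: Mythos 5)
Your proof is correct and follows essentially the same route as the paper: generalized H\"older's inequality with conjugate exponents $p_j = V/v_j$ (where $V=\sum_j v_j$), followed by the per-term subexponential MGF bound and the observation that $v_j^2 p_j^2 = V^2$ while the admissible window is governed by $\max_i p_i a_i = V\max_i(a_i/v_i)$. The explicit treatment of the degenerate $v_j=0$ case is a small welcome addition that the paper elides.
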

\begin{proof}
	Defining the centered r.v. \( Y_j\defeq X_j-\E X_j \) for each \( j\in\{1,\ldots,J\} \), we estimate
	\begin{align*}
	\textstyle
	\E\exp(\theta\sum_{j=1}^{J}Y_j)	=\E\prod_{j=1}^{J}
	&\textstyle
	\exp(\theta Y_j)\leq\prod_{j=1}^{J}\bigl(\E\exp(\theta Y_j p_j)\bigr)^{1/p_j}\\
	&\textstyle
	\leq \prod_{j=1}^{J}\bigl(\exp(v_j^2\theta^2p_j^2/2)\bigr)^{1/p_j}
	=\exp\bigl((\sum_{j=1}^{J}v_j)^2\theta^2/2\bigr)\, .
	\end{align*}
	We used the generalized H\"older's inequality to yield the first inequality with $ \sum_{i=1}^{J}1/p_i=1 $ and $ p_i\defeq v_i^{-1}\sum_{j=1}^{J}v_j $. The SE MGF bound applied for each \( j\in\{1,\ldots,J\} \) yields the second inequality, which is valid for all $ \abs{\theta}<\min_{i\leq J}(p_ia_i)^{-1}=(\max_{i\leq J}p_ia_i)^{-1} $ as asserted.
\end{proof}

\begin{lemma}[subexponential: series]\label{lem:fullseriesbound_dep}
	For \( n\in\N \), let $ \{X_j^{(n)}\}_{j\geq 1} $ be a (possibly dependent) family of nonnegative unit mean \( \SE{v^2/n}{a/n} \) r.v.s. Let $w \in\ell^{1}(\N;\R) $ be nonnegative. Fix $\delta \in (0,\min\{1,v^2/a\})$. Then with probability at least $ 1-2\exp(-n\delta^2/(2v^2)) $, it holds that
	\begin{equation}\label{eqn:full_series_concentration_dep}
	(1-\delta)\textstyle\sum_{j=1}^{\infty} w_j \leq \sum_{j=1}^{\infty} w_j X_j^{(n)}\leq (1+\delta)\sum_{j=1}^{\infty} w_j\, .
	\end{equation}
\end{lemma}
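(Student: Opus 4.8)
The plan is to reduce the infinite-series two-sided bound to a single scalar subexponential concentration statement for the random variable $S_n \defeq \sum_{j=1}^{\infty} w_j X_j^{(n)}$, whose mean is $\sum_{j=1}^{\infty} w_j \eqdef \ZZ$ (finite since $w\in\ell^1$ and each $X_j^{(n)}$ has unit mean; the exchange of expectation and summation is justified by Tonelli as the summands are nonnegative). Since the desired event is exactly $\{|S_n - \ZZ|\le \delta \ZZ\}$ (using $\delta<1$ so the lower bound is the binding one on the left), it suffices to show $S_n - \ZZ \in \SE{\ZZ^2 v^2/n}{\ZZ v a/(nv)}$, i.e. with subexponential parameters scaled by $\ZZ$ relative to a single $X_j^{(n)}$, and then apply the standard one-sided subexponential tail bound \cite[Prop.\ 2.9]{wainwright2019high} twice (upper and lower tails) at level $t=\delta\ZZ$, each giving failure probability at most $\exp(-n\delta^2/(2v^2))$ provided $\delta\ZZ \le \ZZ v^2/a$, equivalently $\delta \le v^2/a$, which holds by hypothesis. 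A union bound over the two tails then yields the claimed $1-2\exp(-n\delta^2/(2v^2))$.

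First I would establish the subexponential bound for a \emph{finite} partial sum $S_n^{(J)}\defeq \sum_{j=1}^J w_j X_j^{(n)}$ using Lemma A.11 (closure under addition): each $w_j X_j^{(n)}$ centered lies in $\SE{w_j^2 v^2/n}{w_j a/n}$ (scaling a subexponential variable by $w_j\ge 0$ scales both parameters by $w_j$), so the ratio $a_j/v_j = (w_j a/n)/(w_j v/\sqrt n) = \sqrt n\, a/v$ is the \emph{same} for every $j$. Hence the $\max$ in \cref{eqn:subexp_add} is just $\sqrt n\, a/v$, and Lemma A.11 gives
\begin{equation*}
S_n^{(J)} - \textstyle\sum_{j=1}^{J} w_j \ \in\ \SE[\Big]{\bigl(\sum_{j=1}^J w_j\bigr)^2 v^2/n}{\bigl(\sum_{j=1}^J w_j\bigr) a/n}\,,
\end{equation*}
where I used $\sum_{j\le J} (w_j v/\sqrt n) = (\sum_{j\le J} w_j)\, v/\sqrt n$. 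The parameters are monotone in $J$ and bounded by those with $\sum_{j\le J}w_j$ replaced by $\ZZ$.

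The main obstacle is passing from the partial-sum MGF bound to one for the full series $S_n$, i.e.\ justifying $\E\exp(\theta(S_n-\ZZ)) \le \exp(\ZZ^2 v^2\theta^2/(2n))$ for $|\theta|< (nv)/(\ZZ a\sqrt n)\cdot\!$... — more carefully, for $|\theta| < (\ZZ a/n)^{-1}$ — by a limiting argument. Here I would argue: $S_n^{(J)} \uparrow S_n$ pointwise (nonnegativity of $w_j X_j^{(n)}$), so $\exp(\theta S_n^{(J)})\to \exp(\theta S_n)$ monotonically for $\theta\ge 0$ and dominatedly (by $1$) for $\theta\le 0$; monotone/dominated convergence then gives $\E\exp(\theta S_n) = \lim_J \E\exp(\theta S_n^{(J)})$, and since $\sum_{j\le J}w_j\uparrow \ZZ$ the finite-$J$ MGF bounds pass to the limit to yield $\E\exp(\theta(S_n-\ZZ))\le \exp(\ZZ^2 v^2\theta^2/(2n))$ for all $|\theta|<(\ZZ a/n)^{-1}$. (For $\theta\ge 0$ one must check $S_n$ is a.s.\ finite so the MGF is not vacuously infinite; this follows since $\E S_n = \ZZ<\infty$.) Thus $S_n-\ZZ\in\SE{\ZZ^2 v^2/n}{\ZZ a/n}$, and the two-sided tail bound quoted above completes the proof. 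I expect the routine verification of the exchange-of-limits steps to be the only delicate point; everything else is a direct application of Lemma A.11 and \cite[Prop.\ 2.9]{wainwright2019high}.
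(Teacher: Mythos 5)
Your proposal is correct and follows essentially the same structure as the paper's proof: scale each summand so that $w_j X_j^{(n)}\in\SE{w_j^2 v^2/n}{w_j a/n}$, apply \cref{lem:subexp_add} to the partial sums (your observation that the ratios $a_j/v_j=\sqrt{n}\,a/v$ are constant in $j$, so the $\max$ in \cref{eqn:subexp_add} is trivial, is exactly why the parameters come out so cleanly), pass to the infinite series, and invoke the two-sided subexponential tail bound at level $\delta\ZZ$ with the constraint $\delta\le v^2/a$ met by hypothesis. The only genuine divergence from the paper is in the limiting step. You split on the sign of $\theta$, using monotone convergence for $\theta\ge 0$ and dominated convergence (dominating by $1$) for $\theta\le 0$, and separately note that a.s.\ finiteness of $S_n$ must be checked. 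The paper instead records a.s.\ convergence $Y_J\to Y_\infty$ via \cref{lem:as_series} and then applies Fatou's lemma to $\exp(\theta(Y_J-\E Y_J))\to\exp(\theta(Y_\infty-\E Y_\infty))$, which handles both signs of $\theta$ in one stroke because only the one-sided inequality $\E\liminf\le\liminf\E$ is needed to propagate the uniform MGF bound. Both routes are valid and yield the same conclusion; Fatou is marginally tidier since it avoids the case split and the separate finiteness remark, but your MCT/DCT argument is a fine alternative.
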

\begin{proof}
	For any $  J \in \N $, define $Y_J\defeq \sum_{j\leq J} w_j X_j^{(n)} $. It follows from \cref{lem:subexp_add} that $ Y_J\in\SE{\frac{v^2}{n}\norm{\{w_j\}_{j\leq J}}_1^2}{\frac{a}{n}\norm{\{w_j\}_{j\leq J}}_1} $. Since
	\(
	\sum_{j=1}^{\infty}\E\abs{w_jX_j^{(n)}}
	=\sum_{j=1}^{\infty}w_j
	< \infty
	\)
	holds by hypothesis, we deduce that $ Y_J\to Y_{\infty} $ as $ J\to\infty $ $ \P $-a.s. by monotone convergence (\cref{lem:as_series}). Fatou's lemma applied to the \( Y_J \) SE MGF bound yields $ Y_{\infty}\in\SE{\frac{v^2}{n}\norm{w}_{\ell^1}^2}{\frac{a}{n}\norm{w}_{\ell^1}} $. Thus, the fact that $ \E Y_{\infty}=\norm{w}_{\ell^1}$ and the SE tail bound (\cref{lem:subexp_bound_dep}) establish that
	$ \P\bigl\{\abs{Y_{\infty}-\E Y_{\infty}}\leq \E Y_{\infty}\, \delta\bigr\}\geq 1 - 2 \exp(-n \delta^2/(2v^2)) $ for all $\delta \in (0,\min\{1,v^2/a\})$ as asserted.
\end{proof}

Our last result, specific to Gaussian design, is used in the proof of \cref{thm:intro_ideas_thm}.
\begin{lemma}[chi-square moments]\label{lem:chisq}
	Let $ \ZZ\sim \chi^2(n) $ be a chi-square r.v. with $ n \in\N $ degrees of freedom. Then for any $ q>-n/2 $, \( 	\E[\ZZ^{q}]=2^{q}\frac{\Gamma(q+n/2)}{\Gamma(n/2)} \),
	where $ \Gamma $ is Euler's gamma function. 
\end{lemma}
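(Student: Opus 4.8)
The plan is to compute the moment directly from the probability density function of the chi-square distribution. Recall that $W\sim\chi^2(n)$ has Lebesgue density $w\mapsto f_n(w) = \bigl(2^{n/2}\Gamma(n/2)\bigr)^{-1} w^{n/2-1}e^{-w/2}$ on $(0,\infty)$. Thus for $q\in\R$ one writes
\begin{equation*}
\E[W^{q}] = \int_{0}^{\infty} w^{q} f_n(w)\idiff w = \dfrac{1}{2^{n/2}\Gamma(n/2)}\int_{0}^{\infty} w^{q+n/2-1}e^{-w/2}\idiff w\,,
\end{equation*}
and the task reduces to evaluating the remaining integral and identifying when it is finite.

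Next I would perform the change of variables $t = w/2$ (so $w = 2t$, $\idiff w = 2\idiff t$), which turns the integral into $2^{q+n/2}\int_0^\infty t^{q+n/2-1}e^{-t}\idiff t$. By the definition of Euler's gamma function, this equals $2^{q+n/2}\Gamma(q+n/2)$ precisely when the exponent satisfies $q+n/2>0$, i.e.\ $q>-n/2$; otherwise the integral diverges at the origin. Substituting back gives
\begin{equation*}
\E[W^{q}] = \dfrac{2^{q+n/2}\Gamma(q+n/2)}{2^{n/2}\Gamma(n/2)} = 2^{q}\,\dfrac{\Gamma(q+n/2)}{\Gamma(n/2)}\,,
\end{equation*}
as claimed. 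The hypothesis $q>-n/2$ is exactly the integrability condition, so no separate justification is needed beyond noting where the gamma integral converges.

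There is no substantive obstacle here: the only point requiring a word of care is the convergence of $\int_0^\infty t^{q+n/2-1}e^{-t}\idiff t$ near $t=0$, which is controlled by $q+n/2>0$, and the exchange of expectation with the (absolutely convergent) integral, which is immediate once $q>-n/2$. The result will be invoked in the proof of \cref{thm:intro_ideas_thm} with $W = N\avgn{g_j}{g_j}/\vartheta_j^2 \sim\chi^2(N)$ and $q=-4$, valid as soon as $N\geq 9$, to verify the negative-moment condition on $\{\avgn{g_j}{g_j}\}$ in \cref{assump:theory_expectation_train}.
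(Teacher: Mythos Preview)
Your proof is correct and follows exactly the approach indicated in the paper, which simply states that a direct calculation with the PDF of $\chi^2(n)$ yields the moment in closed form. Your explicit computation via the substitution $t=w/2$ and identification of the gamma integral is precisely what that sentence summarizes.
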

\begin{proof}
	A direct calculation with the PDF of $ \chi^2(n) $ yields the moment in closed form.
\end{proof}

\section{Proofs of auxiliary results}\label{app:extra}
We prove the facts asserted in \cref{sec:setup_bayes}.
\begin{proof}[Proof of \cref{fact:setup_bayes_measure}]
	By \cref{eqn:norm_to_trace}, \( \cK^{-1/2}\in\HS(H_{\Lambda'};H) \) if and only if \( \E^{x\sim\nu'}\norm{x}^2_{\cK}<\infty \). Hence, \( \nu'(H_{\cK})=1 \) as claimed. For the second claim, for any orthonormal basis \( \{\psi_j\} \) of \( H \) we compute
	\[
	\textstyle\norm{T\Lambda'^{1/2}}_{\HS}^2=\sum_{i,j}\ip{\psi_i}{T(\cK^{1/2}\cK^{-1/2})\Lambda'^{1/2}\psi_j}^2=\sum_{i,j}\ip{(T\cK^{1/2})^{*}\psi_i}{\cK^{-1/2}\Lambda'^{1/2}\psi_j}^2\, .
	\]
	Applying the Cauchy--Schwarz inequality to the rightmost equality yields the upper bound \( \norm{(T\cK^{1/2})^*}_{\HS}^2\norm{\cK^{-1/2}\Lambda'^{1/2}}_{\HS}^2 \). This is finite by hypothesis. So, we deduce \( \E^{x\sim\nu'}\norm{Tx}^2<\infty \).
\end{proof}

\begin{proof}[Proof of \cref{fact:forward_facts}]
	For \( N\in\N \), let \( Z=(z_1,\ldots,z_N) \in H_{\cK}^N\setminus\{0\}\). By definition of \( K_Z \), the map \( \KZstarnormal\in\cL(\HS(H_{\cK};H)) \) acts as the right multiplication operator \( T\mapsto T\cC_{\cK}^{(N)} \), where \( \cC_{\cK}^{(N)}=\frac{1}{N}\sum_{n=1}^{N}z_n\otimes_{H_{\cK}}z_n\in\cL(H_{\cK})\setminus\{0\} \) is the empirical covariance of \( Z \) on \( H_{\cK} \). Thus, \( \KZstarnormal=\id_{H}\otimes\hspace{1mm}\cC_{\cK}^{(N)} \) is a tensor product operator on \( H\otimes H_{\cK} \). But \( \id_{H}\in\cL(H) \) is not compact on \( H \). By \cite[Cor. 1]{kubrusly2015note}, \( \KZstarnormal \) is not compact. Thus, \( K_Z \) is not compact either.
\end{proof}

\section*{Acknowledgments}
The authors thank Kamyar Azizzadenesheli and Joel A. Tropp for 
helpful discussions about statistical learning. The authors are also grateful to the associate editor and two anonymous referees for their helpful feedback.
The computations presented in this paper were conducted on the Resnick High Performance Computing Center, a facility supported by the Resnick Sustainability Institute at the California Institute of Technology.

\bibliographystyle{siamplain}
\bibliography{references}

\end{document}